\documentclass[%
  a4paper,
 onecolumn,
  colorlinks,
]{preprint}

\usepackage[english]{babel}

\usepackage{graphicx}
\usepackage{epstopdf}
\usepackage{subcaption}

\usepackage{amsmath}
\usepackage{amssymb}
\usepackage{amsthm}
\usepackage[ruled,vlined]{algorithm2e}
\usepackage{aliascnt}
\usepackage[nameinlink]{cleveref}

\def\R{\mathbb{R}}
\def\C{\mathbb{C}}

\def\N{\mathbb{N}}
\def\T{\mathcal{T}}
\def\M{\mathcal{M}}

\def\K{\mathcal{K}}

\def\Re{\ensuremath{\text{Re}}}
\def\Im{\ensuremath{\text{Im}}}

\def\d{\ensuremath{\mathrm{d}}}
\newcommand{\imunit}{{\dot{\imath\!\imath}}}

\definecolor{cadmiumgreen}{rgb}{0.0, 0.42, 0.24}

\DeclareMathOperator*{\argmin}{arg\,min}
\newcommand{\bbm}{\begin{bmatrix}}
\newcommand{\ebm}{\end{bmatrix}}
\newcommand{\bb}[1]{\mathbf{#1}}
\newcommand{\id}{\mathrm{id}}
\newcommand{\dom}{\mathcal{D}}

\newtheorem{theorem}{Theorem}

\theoremstyle{definition}
\newtheorem{definition}{Definition}[section]

\newaliascnt{lemma}{theorem}
\newtheorem{lemma}[lemma]{Lemma}
\aliascntresetthe{lemma}

\newaliascnt{corollary}{theorem}
\newtheorem{corollary}[corollary]{Corollary}
\aliascntresetthe{corollary}

\crefname{corollary}{Corollary}{Corollaries}
\Crefname{corollary}{Corollary}{Corollaries}
\crefname{lemma}{Lemma}{Lemmas}
\Crefname{lemma}{Lemma}{Lemmas}

\begin{document}

\title{Wavelet-Based Observables for Koopman Analysis: An Extended Dynamic Mode Decomposition Framework}

\author[$\ast$]{Cankat Tilki}
\affil[$\ast$]{Department of Mathematics, Virginia Tech, Blacksburg, VA - 24061.\authorcr
  \email{cankat@vt.edu}, \orcid{0009-0001-8667-3075}}

\author[$\dagger$]{Serkan G{\" u}{\u g}ercin}
\affil[$\dagger$]{Department of Mathematics, Virginia Tech, Blacksburg, VA - 24061.\authorcr
  \email{gugercin@vt.edu}, \orcid{0000-0003-4564-5999}}

\shorttitle{Wavelet-Based Observables for Koopman Analysis}
\shortauthor{C.Tilki, S.G{\" u}{\u g}ercin}
\shortdate{}
\shortinstitute{}

\keywords{Wavelets, Koopman Operator, Extended Dynamic Mode Decomposition, Continuous Wavelet Transform, Data Driven Modeling.}

\msc{MSC37M05, MSC37M10, MSC37N30, MSC47A15, MSC47A58, MSC47A70, MSC47A75}

\abstract{%
  We present an in-depth analysis of the Koopman semigroup via wavelet transform. Towards this goal, we start by introducing the \emph{wavelet-based observables} and show that they are eigenfunctions of the Koopman semigroup when this semigroup is considered over the Banach space of continuous functions on a compact forward-invariant set endowed with the supremum norm. We then construct closed-form expressions of the action of the Koopman semigroup and its resolvent in terms of these observables.
  To approximate the action of Koopman semigroup numerically, we combine Extended Dynamic Mode Decomposition (EDMD) with the proposed wavelet-based observables leading to the Wavelet Dynamic Mode Decomposition via Continuous Wavelet Transform (cWDMD) algorithm. We validate our theoretical results on two numerical examples.
}

\novelty{}

\maketitle


\section{Introduction}\label{sect: Intro}
Traditionally, system modeling is done via the first-principles approach. One determines what laws the system is governed by, and then builds a state space model, i.e., a space $\M$ and a map $\mathcal{T}:\M\to\M$ over that space, determined by the properties of that system. In this case, we describe a system by the map $\mathcal{T}$ as
\begin{equation}\label{eq: intro_1}
    \dot{\bb{x}} = \mathcal{T}(\bb{x}),\quad \bb{x}(0) = \bb{x}_0\in \M.
\end{equation}
This representation is descriptive and intuitive, but it requires knowing the principles that govern the system itself. For simple physical phenomena, this assumption is not so restrictive. However, as systems become more complex, it becomes harder to isolate and model these principles. That restriction motivated the advance of data-driven modeling. Simply put, data-driven modeling uses data acquired from the system to build a model. Generally, this data comes from sensors measuring some aspect of the system of interest. Therefore, a state space representation of the form \cref{eq: intro_1} is not always obvious.
Data-driven modeling has caught substantial attention, especially in research areas involving complex systems, which are hard to model by first-principles approaches. Influenced by the advances in many fields, e.g., statistics, approximation theory, etc., methods have been devised to model a system with reasonable accuracy by using the underlying state space data, see, for example, \cite{Kutz_Wave, DDM_ss_1, DDM_ss_2, DDM_ss_3, DDM_ss_4, DMD_ex_1, DMD_ex_2, DMD_ex_3, DMD_ex_4, DMD_ex_5, Schmid_2010, Rowley_2009, delay_embed, Williams_2015} and the references therein.

One way to approach data-driven modeling is via the Koopman operator theory \cite{Koopman, KoopMezic, KoopMezicPDE, KoopBook}. In this approach, rather than directly encoding the laws into a state space model, one models a system by how the \emph{measurements} change over time. This is done by assuming that the data is ``observed" from a space $\M$ by measurement functions, or \emph{observables}, $\psi:\M\to\C$. The system is then described by a semigroup of linear operators $(\mathcal{K}_{\Delta t})_{\Delta t\ge0}$, called the \emph{Koopman semigroup}. For each $\Delta t\ge0$, the operator $\mathcal{K}_{\Delta t}$ advances observables by $\Delta t$ time units, in the sense that for any $\psi:\M\to\C$,
\begin{equation}\label{eq: Koop_intro}
    \mathcal{K}_{\Delta t}[\psi](\bb{x}_0)=\psi(\bb{x}_{\Delta t}),
\end{equation}
where $\bb{x}_{\Delta t}=\bb{x}(\Delta t)$ denotes the state at time $\Delta t$ corresponding to the initial condition $\bb{x}_0\in\M$.

However, the Koopman operator is defined over an infinite dimensional function space. Hence, approximating it poses additional difficulties. Specifically, the spectrum of that operator may also include a continuum of points which captures some information regarding the associated behavior of the dynamics, and approximating this continuum is difficult by itself. Moreover, computing the spectra of an infinite dimensional operator is prone to spectral pollution. In short, this pollution comes from the spurious eigenvalues that emerge due to the discretization of the infinite dimensional operator. For more discussion on these potential issues, we refer to the reader to, e.g., \cite{Colbrook_2022, Williams_2015} and the references therein.

Even so, we can still compute the best linear map - in the least-squares sense - that approximates the action of $\mathcal{K}_{\Delta t}$ on a finite dimensional subspace. This idea results in the Extended Dynamic Mode Decomposition (EDMD) algorithm. We refer the reader to \cite{Williams_2015, mauroy2020koopman, Zlatko_Schur,Otto_2019,Li_2017,mpEDMD,riggedEDMD,confKoop} for further details, extensions and applications.

This framework can be extended to more general systems where the solution trajectory is not directly accessible, but can instead be observed through an output map. In this case, the state space representation involves a state-to-output map, taking the form
\begin{equation}\label{eq: intro_ds}
    \begin{aligned}
        \dot{\bb{x}} &= \mathcal{T}(\bb{x})\\
        \bb{y} &= g(\bb{x})
    \end{aligned}~,\quad \begin{aligned}
        &\T:\M\to\M,~g:\M\to\R \\
        &\bb{x}(0) = \bb{x}_0\in \mathcal{M} \subseteq \R^n,~\bb{y} \in \R^d,
    \end{aligned}
\end{equation}
where $g$ represents the state-to-output map. In such systems, the main focus is on the output trajectory, which is independent of the specific state space representation. Because of that, relying on a fixed state can be restrictive. To overcome this limitation, some data-driven methods aim to approximate the output map directly from output data, without requiring state measurements. This avenue is especially well investigated for linear time-invariant systems: when $\T(\bb{x}) = \bb{A}\bb{x}$ and $g(\bb{x}) = \bb{C}\bb{x}$ for some matrices $\bb{A}\in \R^{n\times n}$ and $\bb{C}\in \R^{n\times d}$, see, e.g., \cite{finH2, DDM_freq_2, DDM_freq_3, DDM_freq_4, DDM_freq_5, DDM_freq_6, DDM_freq_7, DDM_ss_1} for an incomplete list. 

In this linear setting, the Laplace transform of the output trajectory serves as the main analytical tool. Building on this perspective, recent works have studied Laplace transforms of output trajectories of nonlinear systems and shown that they can be interpreted as evaluations of the Koopman resolvent. For example \cite{Disc_KoopRes} investigates the Koopman operator associated with a discrete-time linear time-invariant system and shows that the $z$-transform of the output trajectory coincides with the Koopman resolvent evaluated on the output function. Moreover, in \cite[Section III.C]{Disc_KoopRes}, authors have developed a DMD-type algorithm that is based on these Koopman resolvents. Similarly \cite{Cont_KoopRes} investigates the Koopman semigroup of a continuous-time linear time-invariant system and shows that the Laplace transform of the output trajectory matches with the Koopman resolvent evaluated on the output function. These results suggest that these methods originally developed for linear time-invariant systems may be extended to systems of the form \cref{eq: intro_ds} through their Koopman formulation. This motivates the development of Koopman-based methods that act directly on the output data without requiring access to the state measurements.

One of such methods is the Wavelet-based Dynamic Mode Decomposition (WDMD) \cite{Krishnan_2023}. WDMD proposes a DMD-based approach that captures the output behavior without relying on the actual state space representation. It achieves this by constructing auxiliary states directly from output data using the Maximum Overlap Discrete Wavelet Transform (MODWT) \cite[Chapter 5]{DWT_alg}, a discretized form of the wavelet transform. In this way, auxiliary states contain only the necessary information regarding the output behavior. Then, WDMD assumes the underlying model becomes a linear model and approximates $\T$ and $g$ in \cref{eq: intro_ds} by solving a linear least squares problem. Motivated by their numerical results, authors have conjectured that this method provides an approximation to the Koopman operator of the underlying system.

Inspired by \cite{Krishnan_2023}, we analyze the Koopman semigroup via wavelet theory, derive \emph{wavelet-based observables} and for a suitable choice of wavelet, the corresponding wavelet-based observables are eigenfunctions of the Koopman semigroup and hence span an invariant subspace, see \cref{thm: KoopEfn}. Moreover, we derive closed-form expressions for the action of the Koopman semigroup and its resolvent on the output function in terms of these observables. Employing these observables in the EDMD framework leads to the Wavelet-based Dynamic Mode Decomposition via Continuous Wavelet Transform (cWDMD) algorithm. The method uses the wavelet transform of the output trajectory directly without access to state data. Hence, the resulting approximation is tailored to the output of interest and is independent of any particular state space realization.

Our main contributions are summarized as follows.
\begin{itemize}
    \item In \Cref{thm: Koop_act_wt}, we prove that the Koopman semigroup's action can be explicitly characterized via the wavelet transform.

    \item In \Cref{thm: KoopEfn}, we show that for a specific choice of wavelets, the wavelet-based observables satisfy an eigenvalue equation. Hence, subspace spanned by any finite subset of these observables is invariant under the action of the Koopman semigroup.
    
    \item In \Cref{lem: Kdt_act_Morl,thm: Koop_res_Morl} we show that in this case we have a simple, closed-form expression for the action of the Koopman semigroup and its resolvent evaluated at the output function.
    
    \item Based on these theoretical results, we develop the Wavelet-based Dynamic Mode Decomposition via Continuous Wavelet Transform (cWDMD) method (\Cref{alg: cWDMD}). We then demonstrate the performance of this algorithm on two numerical examples.
\end{itemize}

The paper is organized as follows. 
In \Cref{sect: Prelim}, we introduce the necessary background: we briefly review Koopman operator theory, its connection to Extended Dynamic Mode Decomposition (EDMD), and the wavelet transform. The main theoretical results are presented in \Cref{sect: WaveletKoop}. We start by analyzing the Koopman operator via the wavelet transform in \Cref{ss: KoopAn}. Specifically, in \Cref{thm: Koop_act_wt}, we characterize the action of the Koopman operator using the wavelet transform. In \Cref{ss: wave_obs} we motivate and then introduce the wavelet-based observables. We also provide a closed-form expression for the action of the Koopman semigroup on these observables in \Cref{lem: Koop_wt_obs}. In \Cref{ss: wbobs_Morl}, we show that for a suitable choice of wavelet, the corresponding wavelet-based observables are eigenfunctions of the Koopman semigroup, see \cref{thm: KoopEfn}. This then implies that any finite collection of those observables span an invariant subspace of the Koopman semigroup. Lastly in \Cref{ss: Koop_wave}, we visit our analytic expression in \cref{thm: Koop_act_wt} and derive simple closed-form expressions for the action of the Koopman semigroup in \Cref{lem: Kdt_act_Morl} and the Koopman resolvent in \Cref{thm: Koop_res_Morl}.
In \Cref{sect: Implementation} we discuss how to compute these wavelet-based observables in practice leading to the cWDMD algorithm. In \cref{ss: EDMDImpl} we first review the EDMD algorithm; and then adapt it for our wavelet-based observables and propose \cref{alg: cWDMD} in \Cref{ss: cWDMD}. Finally in \cref{ss: NumLTI}, we demonstrate the cWDMD algorithm on two numerical examples, first a linear time-invariant dynamical system and then the Lorenz 63 system with parameters that make the system chaotic.  
Conclusions are given in \Cref{sect: conc}. 

\section{Mathematical preliminaries}\label{sect: Prelim}

\subsection{The Koopman operator and its approximation}\label{ss: KOT}
In this section, we first provide a brief introduction to the Koopman semigroup. Then, we briefly recall how EDMD can be perceived as an algorithm approximating the underlying Koopman operator for discrete-time systems. We refer the reader to \cite{Williams_2015} for more information.  

We first present the discrete-time formulation where the Koopman operator is defined most directly, and then move to the continuous-time formulation. A discrete-time system is given by
\begin{equation} 
    \bb{z}_{k+1} = \T(\bb{z}_k),\quad\bb{z}\in \M \subset \R^n,~\T: \M \to \M. \label{eq: sysdiscrete}
\end{equation}
Unlike the setup in \cref{eq: intro_1}, here the map $\T$ is defined to describe the next timestep rather than the time derivative. As a result, the trajectory consists of a discrete sequence of points. In this setting, the Koopman operator becomes the composition operator with the dynamics. Hence, it represents the one-step evolution of the system. 
\begin{definition}[Discrete-time Koopman Operator \cite{Williams_2015}]\label{def: dKoop}
    Given a discrete-time dynamical system of the form \eqref{eq: sysdiscrete}, and a space of scalar valued functions $\mathcal{F}$ over the state space $\M$,
    the Koopman operator $\mathcal{K}$ is a possibly unbounded linear operator on $\mathcal{F}$ defined on its domain $\mathcal{D}(\mathcal{K})\subseteq \mathcal{F}$ as
    \begin{equation*}
        \K[f] = f\circ \T,~ \text{for } f\in \mathcal{D}(\K),
    \end{equation*}
   where $f:\M\to\C$ is a scalar valued function on $\M$. 
    Hence, the Koopman operator carries dynamics over the function space by precomposing the scalar valued functions with the dynamics map $\T$.
\end{definition}
In many applications, such discrete-time systems arise from time discretizations of continuous-time systems. Accordingly, the discrete trajectory approximates the continuous-time trajectory at selected time instances. With an appropriate discretization strategy, as the time step, $\Delta t$, approaches zero the discrete-time system will better approximate the underlying continuous-time system. This suggests that we have a family of discrete-time dynamical systems that converges to the continuous-time system. Hence, as $\Delta t \to 0$, one obtains the continuous-time Koopman operator. We formalize this idea as follows.
\begin{definition}[Continuous-time Koopman semigroup and its generator \cite{RowleyReview}]\label{def: cKoop} 
    Given a continuous-time system of the form
    \begin{equation*}\tag{\ref{eq: intro_ds}}
        \dot{\bb{x}} = \T(\bb{x}),~~\bb{x}(0) = \bb{x}_0\in \M,\quad\bb{x}\in \M \subset \R^n,~\T: \M \to \M,
    \end{equation*}
    and a space of scalar valued functions $\mathcal{F}$ over the state space $\M$, let $\Phi(t,\bb{x}_0)$ be the flow of \cref{eq: intro_ds}. Then for every $\Delta t > 0$ precomposing with $\Phi(\Delta t, ~\cdot~)$ defines an operator given as
    \begin{equation} \label{eq: Kdelta}
        \mathcal{K}_{\Delta t}[f] = f\circ\Phi(\Delta t,~\cdot~),~\text{for } f\in \mathcal{F}.
    \end{equation}
    This defines a semigroup of operators over scalar functions. The \emph{generator} $\K$ for the semigroup \cref{eq: Kdelta} is then defined as
    \begin{equation}  \label{eq:Kcont}
        \K[f] = \lim_{\Delta t\downarrow 0} \dfrac{\K_{\Delta t}[f] - f}{\Delta t}.
    \end{equation}
\end{definition}
The generator $\K$ serves as the continuous-time analog of the Koopman operator defined in \cref{def: dKoop}.
Because we will work mostly with vector valued functions, we extend these definitions accordingly. For a simpler notation, given a vector valued function $f = \bbm f_1 & f_2 & \cdots & f_M \ebm^\top$ where $f_j:\mathcal{M}\to \mathbb{C}$ for $j=1,\ldots,M$, we will often abuse the notation of \Cref{def: dKoop,def: cKoop} to write 
\begin{equation*}
    \mathcal{K}[f] := \bbm \mathcal{K}[f_1] & \mathcal{K}[f_2] & \cdots & \mathcal{K}[f_M]\ebm^\top \quad \textrm{for all } f_j:\mathcal{M}\to \mathbb{C},
\end{equation*}
where $(\cdot)^\top$ denotes the transpose operation, i.e.,
\begin{equation*}
    \bbm f_1 & f_2 & \cdots & f_M \ebm^\top = \bbm f_1 \\ f_2 \\ \vdots \\ f_M \ebm.
\end{equation*}
When analyzing this semigroup \cref{eq: Kdelta}, one usually needs more structure. In our case we will assume that this semigroup is $C_0$ (or strongly continuous) over a Banach space $(X,\|\cdot\|_X)$ \cite[Definition 5.1]{EngelNagel}. Specifically, a $C_0$-semigroup is defined to be a semigroup that also satisfies
\begin{equation*}
    \|\K_{\Delta t}[f] - f\|_X \to 0 \textrm{ as } \Delta t \to 0
\end{equation*}
for all $f \in \dom(\K)$. Later in \cref{thm: KoopC0ode}, we will justify this assumption by showing that when $\T$ is Lipschitz continuous and $g$ is continuous, the Koopman semigroup of the ODE \cref{eq: out_DS} is $C_0$ on the Banach space of continuous functions endowed with sup norm $(C(\Omega),\|\cdot\|_{\infty})$ over a compact forward-invariant subset $\Omega \subseteq \M$.

The Koopman operator enables us to model a highly nonlinear dynamical system as a linear but an infinite dimensional one. Specifically, for the discrete-time case, we have
\begin{equation}  \label{eq:discTtoK}
    \bb{z}_{k+1} = \T(\bb{z}_k),~\bb{z} \in \M\subset \R^n \quad\longleftrightarrow \quad f_{k+1} = \K[f_k],~f\in X.
\end{equation}
On the left-hand side of~\eqref{eq:discTtoK}, we have a nonlinear map $\T$ acting on a finite dimensional state space $\M$, while on the right-hand side, the Koopman operator $\K$ is linear, but acts on an infinite dimensional space of functions. This distinction highlights a key tradeoff: instead of directly approximating the nonlinear map $\T$, one can model the dynamical system by approximating the linear, yet infinite dimensional, Koopman operator $\K$.

One method to approximate the action of the Koopman operator is the Extended Dynamic Mode Decomposition (EDMD) \cite{Williams_2015}. The EDMD method approximates the action of the Koopman operator projected on a finite dimensional subspace in a data-driven fashion. Specifically, one picks a finite set of scalar valued functions $\psi_{j}:\M\to \C$ for $j=1,\dots, J$ and state snapshots $\bb{z}_k$ for $k=1,\dots, N$. Because the state data $\bb{z}_k$ are evaluated through $\psi_{j}$, these are called \emph{observables}. 

For discrete-time systems, after choosing observables $\psi_{j}$ and gathering the observed data $\psi_{j}(\bb{z}_k)$ for $k=0,\dots,N$ and $j = 1,\dots,J$, one proceeds by solving the minimization problem
\begin{equation}\label{eq: EDMD_min}
    \hat{\bb{K}} = \argmin_{\bb{K}\in \C^{J\times J}} \sum_{k=0}^{N-1}\|\psi(\bb{z}_{k+1})-\bb{K}\psi(\bb{z}_k)\|_2^2,\quad\psi = \bbm\psi_1 & \psi_2 & \cdots & \psi_J \ebm^\top.
\end{equation}
Then, the matrix $\hat{\bb{K}}\in \C^{J\times J}$ approximates the evolution of the observables, i.e.,
\begin{equation*}
    \psi(\bb{z}_{k+1}) \approx \hat{\bb{K}}\psi(\bb{z}_k).
\end{equation*}
One can apply the same procedure for continuous-time systems, but has to post-process the matrix $\hat{\bb{K}}$ accordingly, see, e.g., \cite{gEDMD}.

The relation between EDMD and the Koopman operator can be observed by the action of the Koopman operator on $\psi$, see \cite{Williams_2015} for a detailed proof. Specifically, for a set of observables $\psi_{j}:\M\to\C$, the EDMD algorithm applied to a discrete dynamical system amounts to approximating the action of the Koopman operator on the subspace spanned by $\psi_{j}$, denoted as $\mathcal{F}_J\subset X$; that is,
\begin{equation}\label{eq: EDMD_Koop_approx}
    \mathcal{K}[f](\bb{x}_i) \approx \psi(\bb{x}_i)^*\hat{\bb{K}}^*\bb{a}_f,\quad f\in \mathcal{F}_J~\text{and}~\bb{a}_f\in \C^J~\text{such that}~ f = \psi^*\bb{a}_f,
\end{equation}
where $\bb{K}^*$ denotes the Hermitian adjoint of $\bb{K}$.

This approximation fails to be exact for two reasons: Invariance of the subspace $\mathcal{F}_J$ and data informativity \cite{Williams_2015}. In short, the subspace $\mathcal{F}_J$ is invariant when 
\begin{equation*}
    \K[f] \in \mathcal{F}_J~ \text{for any } f\in \mathcal{F}_J.
\end{equation*}
This assumption is needed for the approximation \cref{eq: EDMD_Koop_approx} since a matrix representation is possible only if we can represent $\mathcal{K}[f]$ as $\K[f] = \psi^*\widetilde{\bb{a}}_f$ for some $\widetilde{\bb{a}}_f\in \C^M$, which is equivalent to saying that $f$ is invariant under $\K$.
In this paper, we focus on the invariance property and develop a theoretical formulation for constructing a set of observables that satisfy it. We do not focus on the data informativity aspect and assume that we have no control over the data sampling process.

Because of the invariance assumption, the choice of observables $\psi$ becomes crucial and depends mainly on the underlying dynamical system. Many choices for observables have been investigated, including polynomial bases, kernel-based methods, neural networks, and other approaches; see, e.g.~\cite{Schmid_2023, RowleyReview, KutzReview}.
These ideas have demonstrated success across various application domains. However, each comes with limitations. For instance, while a monomial-based set of observables exhibit good local convergence, they may fail to yield a closed system, potentially leading to divergence \cite{Schmid_2023}. Overall, the choice of $\psi$ is still an open question for general systems. See the review papers \cite{Schmid_2023, KutzReview} and references therein for a more detailed discussion about the choice of observables.

In summary, choosing the right $\psi$ is not an easy task, and to our knowledge, no $\psi$ is proven to provide an invariant subspace for non-domain-specific problems. One of our goals for this article is to show that using the wavelet transform, one can construct a set of observables for which the span of $\{\psi_i\}_{i=1}^J$ is invariant under the action of the corresponding Koopman operator; see \Cref{thm: KoopEfn}.

\subsection{Wavelets and wavelet transform}\label{ss: Wavelets}
In this section, we will follow \cite{Dau_92,Mallat} and briefly summarize the wavelets and wavelet transform. Given $h\in L_2(\R)$, the wavelet transform analyzes $h$ by taking inner products with a family of shifted and scaled wavelets $\Gamma_{\sigma,\tau}(t)$, thereby describing how the oscillatory content of the signal changes as a function of time and scale. This is analogous to the Fourier transform, but instead of complex exponentials $e^{i\omega t}$, the wavelet transform uses localized analyzing functions, known as wavelets.

The wavelets $\Gamma_{\sigma,\tau}$ are populated from $\Gamma$; an element of $L_1(\R)\cap L_2(\R)$. One can then define wavelets $\Gamma_{\sigma,\tau}$ as shifts and scales of $\Gamma$ via
\begin{equation}  \label{eq: mothersc}
    \Gamma_{\sigma,\tau}(t) = |\sigma|^{-1}\Gamma\left(\dfrac{t-\tau}{\sigma}\right),\quad \text{for any } \sigma\in\R \setminus\{0\},~\tau\in \R.
\end{equation}
Then for any scale $\sigma\in\R\setminus\{0\}$ and shift $\tau\in \R$ one can define the wavelet transform $\mathcal{W}$ as
\begin{equation}\label{eq: WT}
    \mathcal{W}[h](\sigma,\tau) = \int_{t=-\infty}^\infty h(t)\overline{\Gamma_{\sigma,\tau}(t)} \d t,
\end{equation}
where $\overline{z}$ denotes the complex conjugate of $z\in \C$. Assuming that the $\Gamma$ has zero mean, i.e.,
\begin{equation}\label{eq: mothermean}
    \int_{t = -\infty}^\infty\Gamma(t) \d t = 0, 
\end{equation}
the inverse wavelet transform \cite[Proposition 2.4.1]{Dau_92} is given by 
\begin{equation}\label{eq: iCWT_L2}
    h = \frac{1}{C_{\Gamma}}\int_{\sigma\in \mathbb{R}} \int_{\tau\in \mathbb{R}} \mathcal{W}[h](\sigma,\tau)\Gamma_{\sigma,\tau}\frac{\d\tau \d \sigma}{|\sigma|},~ \textrm{with }C_\Gamma = 2\pi\int_{-\infty}^\infty \frac{|\hat{\Gamma}(\bb{\theta})|^2}{|\theta|}\d\theta,
\end{equation}
where $\hat{\Gamma}$ stands for the Fourier transform~\cite{YosidaFA} of $\Gamma$ defined as
\begin{equation}\label{eq: fourier_transform}
    \hat{\Gamma}(\omega) = \dfrac{1}{\sqrt{2 \pi}}\int_{t\in\R}\Gamma(t)e^{-i\omega t} \d t,~\omega\in \R.
\end{equation}
Note that $\mathcal{W}[h](\sigma,\tau)\Gamma_{\sigma,\tau}$ is not well-defined for $\sigma = 0$. Hence we consider this integral as a limit, see \cite[Equation 2.4.5]{Dau_92}. \Cref{eq: iCWT_L2} should be understood in the $L_2$ sense. One can also show that this equality holds in the pointwise sense even when $h$ is not necessarily in $L_2(\R)$ but only bounded and weakly oscillating around zero \cite[Theorem 2.2]{ICWT_pw}, i.e.,
\begin{equation}\label{eq: wosc0}
    \lim_{r\to \infty}\dfrac{1}{2 r}\int_{t-r}^{t+r}h(\tau) \d \tau = 0\mbox{ uniformly in }t.
\end{equation}
However in this case, one needs to assume that $\Gamma$ decays fast enough so that
\begin{equation*}
    \log(2+|t|)\Gamma(t)\in L_1(\R).
\end{equation*}
Then for a fixed $t \in \R$ we have
\begin{equation}\label{eq: iCWT_pw0}
    h(t) = \frac{1}{C_{\Gamma}}\int_{\sigma\in \mathbb{R}} \int_{\tau\in \mathbb{R}} \mathcal{W}[h](\sigma,\tau)\Gamma_{\sigma,\tau}(t)\frac{\d\tau \d \sigma}{|\sigma|},~ \textrm{with }C_\Gamma = 2\pi\int_{-\infty}^\infty \frac{|\hat{\Gamma}(\bb{\theta})|^2}{|\theta|}\d\theta.
\end{equation}

One important property of the wavelet transform is the translation covariance: a translation of the input signal induces the corresponding translation of the wavelet coefficients in the shift variable. Specifically, given a signal $h$ we expect that a shift $\Delta t$ in $h$ results in the same wavelet transform, but shifted by $\Delta t$. Thus, we have
\begin{equation}\label{eq: shift_inv}
    \mathcal{W}[h_+](\sigma,\tau) = \mathcal{W}[h](\sigma,\tau+\Delta t),\quad \text{for }h_+(t):= h(t+\Delta t).
\end{equation}
This follows from an appropriate change of variables in the integral in \cref{eq: WT}.

\section{Wavelets and Koopman theory}\label{sect: WaveletKoop}
In this section, we analyze the interplay between the Koopman operator and the wavelet transform of the output trajectory. For this, we will derive a closed form expression for the action of the Koopman operator using the wavelet transform at \cref{thm: Koop_act_wt}.

\subsection{Analyzing the Koopman operator using wavelet transform}\label{ss: KoopAn}
The main result of \Cref{ss: KoopAn} is \Cref{thm: Koop_act_wt}, which states that we can write the action of the Koopman operator on a continuous function in closed form by employing the wavelet transform. This analytic expression for the Koopman operator will then be used in \Cref{ss: wave_obs} to motivate the wavelet-based observables $\psi_{\sigma}$, see \cref{eq: wave_obs}.

Consider a continuous-time dynamical system with a scalar output
\begin{equation}\label{eq: out_DS}
    \begin{aligned}
        \dot{\bb{x}} &= \mathcal{T}(\bb{x})\\
        y &= g(\bb{x})
    \end{aligned}~,\quad \left\{\begin{aligned}
        &\bb{x}(0) = \bb{x}_0\in \M \subseteq \R^n \\ 
        &y \in \R.
    \end{aligned}\right.
\end{equation}
Let $\T$ be globally Lipschitz and $g$ be bounded and continuous on $\M$. Then, by the Picard-Lindel{\"o}f theorem \cite{PL_cite}, for every initial condition $\bb{x}_0$ there exists a global solution. Denote the corresponding solution map as $\Phi(t,\bb{x}_0)$ for the initial condition $\bb{x}_0\in \M$ and time $t\in\R$. Thus, the state trajectory corresponding to the initial condition $\bb{x}_0\in\M\subset \R^n$ at time $t\in\R$ is $\Phi(t,\bb{x}_0)$. Then we can show that the associated Koopman semigroup $(\K_{\Delta t})_{t\geq 0}$ is a $C_0$-semigroup on $(C(\Omega),\|\cdot\|_\infty)$ for a compact, forward-invariant subset $\Omega\subseteq \M$.
\begin{theorem}\label{thm: KoopC0ode}
    Consider the dynamical system~\cref{eq: out_DS}. Let $\T$ be globally Lipschitz and $\Omega\subseteq \M$ be a compact, forward-invariant subset of $\M$. Then the Koopman semigroup associated with \cref{eq: out_DS} is a $C_0$-semigroup on $(C(\Omega),\|\cdot\|_\infty)$.
\end{theorem}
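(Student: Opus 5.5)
We will verify the three defining properties of a $C_0$-semigroup on $X = (C(\Omega),\|\cdot\|_\infty)$. The three properties are: each $\K_{\Delta t}$ is a bounded linear operator on $X$; the semigroup law holds; and we have strong continuity at $\Delta t = 0$.

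\textbf{Well-definedness and boundedness.} Since $\T$ is globally Lipschitz, the Picard--Lindel\"of theorem gives a unique global flow $\Phi$. Standard Gr\"onwall estimates show that $\bb{x}_0 \mapsto \Phi(\Delta t,\bb{x}_0)$ is continuous; indeed it is Lipschitz with constant $e^{L\Delta t}$, where $L$ is the Lipschitz constant of $\T$. Forward invariance of $\Omega$ means $\Phi(\Delta t,\Omega)\subseteq \Omega$ for all $\Delta t\ge 0$. Hence, for $f\in C(\Omega)$, the composition $f\circ\Phi(\Delta t,\cdot)$ is a continuous function on $\Omega$. So $\K_{\Delta t}$ maps $C(\Omega)$ into itself. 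It is clearly linear, and $\|\K_{\Delta t} f\|_\infty = \sup_{\bb{x}\in\Omega}|f(\Phi(\Delta t,\bb{x}))| \le \|f\|_\infty$. Thus each $\K_{\Delta t}$ is a contraction.

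\textbf{Semigroup law.} By uniqueness of solutions, the flow property $\Phi(0,\cdot)=\id$ and $\Phi(t+s,\cdot)=\Phi(t,\Phi(s,\cdot))$ holds. This immediately gives $\K_0=I$ and $\K_{t+s}f = f\circ\Phi(t+s,\cdot) = (f\circ\Phi(t,\cdot))\circ\Phi(s,\cdot) = \K_s\K_t f$.

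\textbf{Strong continuity.} This is the only step requiring care. Fix $f\in C(\Omega)$ and $\varepsilon>0$.

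\emph{Uniform continuity of $f$.} Since $\Omega$ is compact, $f$ is uniformly continuous on $\Omega$. So there is $\delta>0$ such that $\|\bb{x}-\bb{y}\|<\delta$ with $\bb{x},\bb{y}\in\Omega$ implies $|f(\bb{x})-f(\bb{y})|<\varepsilon$.

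\emph{Uniform smallness of the displacement.} It remains to show that $\sup_{\bb{x}\in\Omega}\|\Phi(\Delta t,\bb{x})-\bb{x}\|\to 0$ as $\Delta t\downarrow 0$. Write $\Phi(\Delta t,\bb{x})-\bb{x} = \int_0^{\Delta t}\T(\Phi(s,\bb{x}))\,\d s$. For $s \ge 0$ we have $\Phi(s,\bb{x})\in\Omega$ by forward invariance. $\T$ is continuous, being Lipschitz, so it is bounded on the compact set $\Omega$; call this bound $M$. Alternatively, one can use $\|\T(\bb{y})\|\le \|\T(0)\|+L\|\bb{y}\|$ together with the boundedness of $\Omega$. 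Either way, $\|\Phi(\Delta t,\bb{x})-\bb{x}\|\le M\Delta t$ uniformly in $\bb{x}\in\Omega$.

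\emph{Conclusion.} For $\Delta t<\delta/M$, both points $\Phi(\Delta t,\bb{x})$ and $\bb{x}$ lie in $\Omega$ and are within distance $\delta$. Hence $\|\K_{\Delta t}f-f\|_\infty\le\varepsilon$, proving strong continuity for every $f\in C(\Omega)$, which is stronger than what the definition requires.

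The main obstacle is making the convergence uniform over $\Omega$. It is handled by combining compactness, which gives uniform continuity of $f$ and boundedness of $\T$, with forward invariance, which keeps trajectories inside $\Omega$ so that $f$ is only ever evaluated on $\Omega$.
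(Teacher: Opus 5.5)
Your proof is correct, and it takes a somewhat different route from the paper's.

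\textbf{Which axioms are checked.} The paper's proof only establishes strong continuity. It takes for granted that $\K_{\Delta t}$ maps $C(\Omega)$ into itself, is bounded, and satisfies the semigroup law. You verify these explicitly, using continuity of $\Phi(\Delta t,\cdot)$ via Gr\"onwall, forward invariance, contractivity, and the flow property. Your argument is therefore the more complete one.

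\textbf{The uniform displacement estimate.} The paper argues qualitatively. Lipschitz continuity of $\T$ makes the flow jointly continuous, hence uniformly continuous on the compact set $[0,T]\times\Omega$. This gives $\sup_{\bb{x}\in\Omega}\|\Phi(\Delta t,\bb{x})-\bb{x}\|\to 0$, which the paper then transfers to $f$ through the modulus of continuity $\omega_f$. Your $\varepsilon$--$\delta$ step is the same idea, and it avoids the paper's slip of calling $\omega_f$ monotonically decreasing when it is nondecreasing.

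You instead use the integral form of the ODE together with forward invariance to get $\|\Phi(\Delta t,\bb{x})-\bb{x}\|\le M\Delta t$ with $M=\sup_{\Omega}\|\T\|$. This makes the convergence quantitative: $\|\K_{\Delta t}f-f\|_\infty$ is controlled by the modulus of continuity of $f$ at scale $M\Delta t$. The paper's route needs only joint continuity of the flow and compactness of $[0,T]\times\Omega$. It would therefore apply verbatim to any continuous semiflow leaving $\Omega$ invariant. Your route exploits the ODE structure to obtain an explicit rate.

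\textbf{Your closing remark.} You say that proving strong continuity for every $f\in C(\Omega)$ is stronger than required. That is true only relative to the paper's nonstandard phrasing, which requires it for $f\in\dom(\K)$. In the standard definition (e.g., Engel--Nagel), strong continuity for every element of the space is exactly what is required. Your argument delivers that, so nothing is missing.
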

\begin{proof}
    Because $\T$ is Lipschitz continuous, the solution map $\Phi(t,\bb{x})$ is continuous in $t\in \R$ and $\bb{x}\in \Omega$. Then, for any fixed $T\in \R_+$, since both $[0,T]$ and $\Omega$ are compact, $\Phi$ becomes uniformly continuous over $[0,T]\times \Omega$. Similarly, any $f\in C(\Omega)$ is continuous on a compact set $\Omega$, hence it is uniformly continuous over $\Omega$. Define the modulus of continuity \cite[Chapter 1.1]{Rivlin} $\omega_f:\R_+ \to \R_+$ for any $f\in C(\Omega)$ as
\begin{equation}\label{eq: defmodcont}
        \omega_f(r) = \sup_{\|\bb{x}-\bb{y}\| < r} |f(\bb{x})-f(\bb{y})|.
    \end{equation}
    From the definition it follows that for any $\bb{x},\bb{y}\in \Omega$
    \begin{equation}\label{eq: modcontbd}
        |f(\bb{x}) - f(\bb{y})| \leq \omega_f(\|\bb{x}- \bb{y}\|).
    \end{equation}
    Moreover, since $f$ is uniformly continuous on $\Omega$ we have that $\omega_f(r)\to 0$ as $r\to 0$ \cite[Lemma 1.2]{Rivlin}. We want to show $\|\K_{\Delta t}[f] -f\|_\infty\to 0$ as $\Delta t\to 0^+$. Using the \cref{def: cKoop}, this can be rewritten as
    \begin{equation}\label{eq: KoopC0ode}
        \|\K_{\Delta t}[f] -f\|_\infty = \sup_{\bb{x}\in \Omega} |f(\Phi(\Delta t,\bb{x})) - f(\bb{x})|.
    \end{equation}
    Hence we want to show
    \begin{equation}\label{eq: KoopC0prop}
        \sup_{\bb{x}\in \Omega} |f(\Phi(\Delta t,\bb{x})) - f(\bb{x})| \to 0 \quad \mbox{as}~\Delta t \to 0.
    \end{equation}
    Consider $|f(\Phi(\Delta t,\bb{x})) - f(\bb{x})|$. Using \cref{eq: modcontbd} we can bound this as
    \begin{equation}\label{eq: Koopmcbd}
        |f(\Phi(\Delta t,\bb{x})) - f(\bb{x})| \leq \omega_f(\|\Phi(\Delta t,\bb{x})- \bb{x}\|).
    \end{equation}
    Taking the supremum of both sides of \cref{eq: Koopmcbd} we have
    \begin{equation}\label{eq: supmodcont}
        \sup_{\bb{x}\in \Omega}|f(\Phi(\Delta t,\bb{x})) - f(\bb{x})| \leq \sup_{\bb{x}\in \Omega}\omega_f(\|\Phi(\Delta t,\bb{x})- \bb{x}\|).
    \end{equation}
    Consider the right hand side of \cref{eq: supmodcont}. Let $r_{\Delta t}\in \R_+$ be defined as
    \begin{equation*}
        r_{\Delta t} := \sup_{\bb{x}\in \Omega}\|\Phi(\Delta t,\bb{x})- \bb{x}\|.
    \end{equation*}
    Note that $r_{\Delta t} < \infty$ since $\Phi$ is continuous in $\bb{x}\in \Omega$ and $\Omega$ is compact. Then because $\omega_f(r)$ is monotonically decreasing in $r$, for any $\bb{x}\in \Omega$ we have
    \begin{equation}\label{eq: mcbdmax}
        \omega_f(\|\Phi(\Delta t,\bb{x}))- \bb{x}\|) \leq \omega_f(r_{\Delta t}).
    \end{equation}
    Taking the supremum of \cref{eq: mcbdmax} with respect to $\Omega$, we get 
    \begin{equation*}
        \sup_{\bb{x}\in \Omega}\omega_f(\|\Phi(\Delta t,\bb{x})- \bb{x}\|) \leq \omega_f(r_{\Delta t}) = \omega_f\left(\sup_{\bb{x}\in \Omega}\|\Phi(\Delta t,\bb{x})- \bb{x}\|\right).
    \end{equation*}
    Then together with \cref{eq: supmodcont} we have
    \begin{equation}\label{eq: modcontlast}
        \sup_{\bb{x}\in \Omega}|f(\Phi(\Delta t,\bb{x})) - f(\bb{x})| \leq \omega_f\left(\sup_{\bb{x}\in \Omega}\|\Phi(\Delta t,\bb{x})- \bb{x}\|\right).
    \end{equation}
    Because $\Phi$ is uniformly continuous in both variables over $[0,T]\times \Omega$, we get 
    \begin{equation*}
        \lim_{\Delta t\downarrow 0}\sup_{\bb{x}\in \Omega}\|\Phi(\Delta t,\bb{x})- \bb{x}\| = 0.
    \end{equation*}
    Since $\omega_f(r)\to 0 $ as $r\to 0$ for $f\in C(\Omega)$, we have that
    \begin{equation*}
        \lim_{\Delta t \downarrow 0}\omega_f\left(\sup_{\bb{x}\in \Omega}\|\Phi(\Delta t,\bb{x})- \bb{x}\|\right) = 0.
    \end{equation*}
    Then \cref{eq: modcontlast} suggests that
    \begin{equation}\label{eq: Koopgenmcbd}
        \lim_{\Delta t\downarrow 0}\sup_{\bb{x}\in \Omega}|f(\Phi(\Delta t,\bb{x})) - f(\bb{x})| \leq \lim_{\Delta t\downarrow 0}\omega_f\left(\sup_{\bb{x}\in \Omega}\|\Phi(\Delta t,\bb{x})- \bb{x}\|\right) = 0.
    \end{equation}
    Since left hand side of \cref{eq: Koopgenmcbd} needs to be nonnegative, we have
    \begin{equation*}
        \lim_{\Delta t\downarrow 0}\sup_{\bb{x}\in \Omega}|f(\Phi(\Delta t,\bb{x})) - f(\bb{x})| = 0.
    \end{equation*}
    This together with \cref{eq: KoopC0ode} yields
    \begin{equation*}
        \lim_{\Delta t\downarrow 0}\sup_{\bb{x}\in \Omega}|f(\Phi(\Delta t,\bb{x})) - f(\bb{x})| = \lim_{\Delta t\downarrow 0}\|\K_{\Delta t}[f]-f\|_\infty = 0.
    \end{equation*}
    This shows that \cref{eq: KoopC0prop} holds and hence $(\K_{\Delta t})_{t\geq 0}$ is a $C_0$-semigroup on $(C(\Omega),\|\cdot\|_\infty)$.
\end{proof}
Our goal is to capture the action of the Koopman generator $\K$ associated with the dynamical system \cref{eq: out_DS} from the output trajectory $y(t,\bb{x}_0):= g\circ\Phi(t,\bb{x}_0)$. As we show next, in this setting, the action of the Koopman operator admits a closed-form expression. 
\begin{theorem}[Action of the Koopman semigroup $\mathcal{K}_{\Delta t}$ on $C(\Omega)$]\label{thm: Koop_act_wt}
    Consider the dynamical system~\cref{eq: out_DS}. Let $\T$ be globally Lipschitz and $g$ be bounded and continuous. Pick a point $\bb{x}_0\in \Omega$ and assume that $y(\cdot,\bb{x}_0)$ satisfies \cref{eq: wosc0} and $\log(2+|t|)\Gamma(t)\in L_1(\R)$. Then the action of an element of the semigroup~\eqref{eq: Kdelta} on $g$ at point $\bb{x}_0\in \M$ is given by
    \begin{equation}\label{eq: Kdt_WT}
        \mathcal{K}_{\Delta t}[g](\bb{x})
        =\frac{1}{C_{\Gamma}}\int_{\sigma\in \R} \int_{\tau\in \R} \mathcal{W}[y(\cdot,\bb{x})](\sigma,\tau+\Delta t) \Gamma_{\sigma,\tau}(0)~\d\tau \dfrac{\d \sigma}{|\sigma|},~\text{for any } \Delta t >0,
    \end{equation}
    where $\K_{\Delta t}$, $\Gamma_{\sigma,\tau}(t)$, $\mathcal{W}[y(\cdot,\bb{x}_0)](\sigma,\tau)$, and $C_{\Gamma}$ are as defined in \cref{eq: Kdelta,eq: WT,eq: mothersc,eq: iCWT_pw0}, respectively. 
\end{theorem}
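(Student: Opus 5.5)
The idea is to write $\mathcal{K}_{\Delta t}[g](\bb{x})$ as a value of the shifted output trajectory, and then reconstruct that value by the pointwise inverse wavelet transform \cref{eq: iCWT_pw0}.

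By \cref{def: cKoop} and the group property of the flow $\Phi$, which holds for all $t\in\R$ since $\T$ is globally Lipschitz, we get
\begin{equation*}
\mathcal{K}_{\Delta t}[g](\bb{x}) = g(\Phi(\Delta t,\bb{x})) = y(\Delta t,\bb{x}) = y_+(0), \qquad y_+(t):=y(t+\Delta t,\bb{x}).
\end{equation*}
So the claim reduces to evaluating the shifted signal $y_+$ at the single time $t=0$.

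Next we check that $y_+$ satisfies the hypotheses of the pointwise inversion result \cite[Theorem 2.2]{ICWT_pw}.
\begin{itemize}
\item $y_+$ is bounded, since $g$ is bounded.
\item $y_+$ is continuous in $t$, since $g$ and $\Phi$ are continuous. This makes $t=0$ a legitimate point for pointwise reconstruction; I would check whether the cited theorem requires continuity at the evaluation point.
\item $y_+$ is weakly oscillating around zero in the sense of \cref{eq: wosc0}. The average of $y_+$ over $[t-r,t+r]$ equals the average of $y(\cdot,\bb{x})$ over $[t+\Delta t-r,\,t+\Delta t+r]$. Because the convergence in \cref{eq: wosc0} is uniform in the center, it transfers to $y_+$ with the same uniformity.
\end{itemize}
Together with the assumption $\log(2+|t|)\Gamma(t)\in L_1(\R)$ and the zero-mean property \cref{eq: mothermean}, this lets us apply \cref{eq: iCWT_pw0} to $h=y_+$ at $t=0$:
\begin{equation*}
y_+(0)=\frac{1}{C_\Gamma}\int_{\sigma\in\R}\int_{\tau\in\R}\mathcal{W}[y_+](\sigma,\tau)\,\Gamma_{\sigma,\tau}(0)\,\d\tau\,\frac{\d\sigma}{|\sigma|}.
\end{equation*}

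Finally, the translation covariance \cref{eq: shift_inv} gives $\mathcal{W}[y_+](\sigma,\tau)=\mathcal{W}[y(\cdot,\bb{x})](\sigma,\tau+\Delta t)$. The wavelet transform in \cref{eq: WT} is well defined here because $y$ is bounded and $\Gamma\in L_1(\R)$. Substituting this into the reconstruction formula yields \cref{eq: Kdt_WT}.

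The main obstacle is the second step: confirming that the hypotheses of the pointwise inversion theorem survive the time shift, and interpreting the double integral correctly. As in \cite[Equation 2.4.5]{Dau_92}, that integral is an improper limit with $\sigma$ kept away from $0$ and $\infty$. The shift acts only on the $\tau$ variable, so it does not interact with this limiting procedure in $\sigma$, and the reconstruction formula carries over unchanged.
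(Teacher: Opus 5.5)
Your proposal is correct and follows essentially the same route as the paper. Like the paper, you write $\mathcal{K}_{\Delta t}[g](\bb{x})$ as the shifted output $y(\cdot+\Delta t,\bb{x})$ evaluated at $t=0$, reconstruct it with the pointwise inverse wavelet transform \cref{eq: iCWT_pw0}, and move the shift into the $\tau$ argument via translation covariance \cref{eq: shift_inv}; your explicit check that the uniform weak-oscillation condition \cref{eq: wosc0} survives the time shift fills in a step the paper leaves implicit.
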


\begin{proof}
    Fix the time-shift $\Delta t >0$. Then applying $\mathcal{K}_{\Delta t}$ to $g$ yields
    \begin{equation*}
         \mathcal{K}_{\Delta t}[g](\bb{x})= g\circ \Phi(\Delta t,\bb{x}).
    \end{equation*}
    Since $g$ is continuous, $g\circ\Phi(\cdot,\bb{x}) = y(\cdot,\bb{x})$ is continuous. Moreover, since $g$ is bounded, $y(\cdot,\bb{x})$ is bounded for any $\bb{x}$. So assuming $\log(2+|t|)\Gamma(t)\in L_1(\R)$, the inverse wavelet transform \cref{eq: iCWT_pw0} holds pointwise. Then we obtain
    \begin{equation}\label{eq: h_+WT}
        g\circ \Phi(t,\bb{x}) = y(t,\bb{x}) = \frac{1}{C_{\Gamma}}\int_{\sigma\in \R} \int_{\tau\in \R} \mathcal{W}[y(\cdot,\bb{x})](\sigma,\tau)\Gamma_{\sigma,\tau}(t)~\d\tau \dfrac{\d \sigma}{|\sigma|}.
    \end{equation}
    Define $y_{\Delta t}(t,\bb{x}) := y(t+\Delta t,\bb{x})$. Then, by \cref{eq: h_+WT}, we can rewrite $\mathcal{K}_{\Delta t}[g](\bb{x})$ as
    \begin{equation}\label{eq: h_+wave}
        \mathcal{K}_{\Delta t}[g](\bb{x}) = y_{\Delta t}(0,\bb{x}) = \frac{1}{C_{\Gamma}}\int_{\sigma\in \R} \int_{\tau\in \R} \mathcal{W}[y(\cdot+\Delta t,\bb{x})](\sigma,\tau)\Gamma_{\sigma,\tau}(0)~\d\tau \dfrac{\d \sigma}{|\sigma|}.
    \end{equation}
    By translation covariance, (see \cref{eq: shift_inv}), we obtain
    \begin{equation}\label{eq: shift_h}
    \begin{aligned}
        \frac{1}{C_{\Gamma}}\int_{\sigma\in \R} \int_{\tau\in \R} \mathcal{W}&[y(\cdot+\Delta t,\bb{x})](\sigma,\tau) \Gamma_{\sigma,\tau}(0)\d\tau \dfrac{\d \sigma}{|\sigma|} = \\
        &=
        \frac{1}{C_{\Gamma}}\int_{\sigma\in \R} \int_{\tau\in \R} \mathcal{W}[y(\cdot,\bb{x})](\sigma,\tau+\Delta t) \Gamma_{\sigma,\tau}(0)~\d\tau \dfrac{\d \sigma}{|\sigma|}.
    \end{aligned}
    \end{equation}
        Equations \cref{eq: h_+wave} and \cref{eq: shift_h} reveal that we can express $\mathcal{K}_{\Delta t}[g](\bb{x})$ as
    \begin{equation*}
    \begin{aligned}
        \mathcal{K}_{\Delta t}[g](\bb{x}) &= \frac{1}{C_{\Gamma}}\int_{\sigma\in \R} \int_{\tau\in \R} \mathcal{W}[y(\cdot+\Delta t,\bb{x})](\sigma,\tau) \Gamma_{\sigma,\tau}(0)\d\tau \dfrac{\d \sigma}{|\sigma|} \\&=
        \frac{1}{C_{\Gamma}}\int_{\sigma\in \R} \int_{\tau\in \R} \mathcal{W}[y(\cdot,\bb{x})](\sigma,\tau+\Delta t) \Gamma_{\sigma,\tau}(0)\d\tau \dfrac{\d \sigma}{|\sigma|}.
    \end{aligned}
    \end{equation*}
    hence proving \cref{eq: Kdt_WT}.
\end{proof}
\Cref{thm: Koop_act_wt} provides a closed-form representation of the Koopman action in terms of the wavelet transform. Consequently, we can characterize the action of Koopman semigroup on $g$ entirely from the wavelet transform of the measured output $y(t,\bb{x}_0)$. 

\subsection{Wavelet-based observables}\label{ss: wave_obs}

In this section, we will define the \emph{wavelet-based observables} $\psi_{\sigma_j}$ and motivate why they are good candidates for EDMD. Recall the closed form expression for the action of the Koopman operator proved in \Cref{thm: Koop_act_wt}. Namely, given any $\Delta t \geq 0$ and $\bb{x}\in \M$ we have
\begin{equation*}
    \mathcal{K}_{\Delta t}[g](\bb{x})
    =\frac{1}{C_{\Gamma}}\int_{\sigma\in \R} \int_{\tau\in \R} \mathcal{W}[y(\cdot,\bb{x})](\sigma,\tau+\Delta t) \Gamma_{\sigma,\tau}(0)~\d\tau \dfrac{\d \sigma}{|\sigma|}.\tag{\ref{eq: Kdt_WT}}
\end{equation*}
Observe that the action of the Koopman semigroup element $\K_{\Delta t}$ \emph{affects only the shift parameter} in the wavelet transform. Specifically, it translates the wavelet transform in time, yielding $\mathcal{W}[y(\cdot,\bb{x})](\sigma,\tau+\Delta t)$. This means that the evolution under $\K_{\Delta t}$ is fully captured by a translation in $\tau$. Thus, the action of $\K_{\Delta t}[g]$ can be recovered from \emph{how $\mathcal{W}[y(\cdot,\bb{x})](\sigma,\tau)$ varies with respect to $\tau$}.

Consider an approximation to \cref{eq: Kdt_WT} via quadrature. Note that the quadrature nodes correspond to shift and scale parameters of the wavelet coefficients. For the shift nodes $\tau_k$ we assume that $\tau_1 = 0$ and pick $\tau_{\text{min}} > 0$ such that $|\tau_k| \geq \tau_{\text{min}} > 0$ for any $k > 1$. This quadrature approximation to~\cref{eq: Kdt_WT} can be explicitly written as
\begin{equation} \label{eq: Kdtfirstapprox}
    \mathcal{K}_{\Delta t}[g](\bb{x}) \approx \sum_{j=1}^J \sum_{k=1}^N \alpha_{j,k} \mathcal{W}[y(\cdot,\bb{x})](\sigma_j,\tau_k+\Delta t) \Gamma_{\sigma_j,\tau_k}(0).
\end{equation}
We will work with $\Gamma\in L_1(\R)$ that are rapidly decaying, i.e., for every $m\in\N$ there exists a constant $C_m>0$ such that
\begin{equation*}
    |\Gamma(t)| \leq C_m(1+|t|)^{-m}.
\end{equation*}
Then for $\Gamma_{\sigma,\tau}(0)$ in \cref{eq: mothersc} we get
\begin{equation}\label{eq: mwave_bd}
    |\Gamma_{\sigma,\tau}(0)| = |\sigma|^{-1}\left|\Gamma\left(-\dfrac{\tau}{\sigma}\right)\right| \leq C_m |\sigma|^{-1}\left(1+\left|\dfrac{\tau}{\sigma}\right|\right)^{-m}.
\end{equation}
By applying the H\"{o}lder inequality for $L_1/ L_\infty$ to \cref{eq: WT}, we obtain the following bound for the wavelet coefficients: 
\begin{equation}\label{eq: bd_wc}
    |\mathcal{W}[y(\cdot,\bb{x})](\sigma,\tau)| \leq \int_{t=-\infty}^\infty |y(t,\bb{x})\overline{\Gamma_{\sigma,\tau}(t)}| \d t \leq \|y(\cdot,\bb{x})\|_{L_\infty} \|\Gamma_{\sigma,\tau}\|_{L_1} = \|y(\cdot,\bb{x})\|_{L_\infty} \|\Gamma\|_{L_1}.
\end{equation}
Note that $\|y(\cdot,\bb{x})\|_{L_\infty}$ is finite since for any $t\in \R$ and $\bb{x}\in \M$, we have
$
    y(t,\bb{x}) = g(\Phi(t,\bb{x}))
$
and $g$ is assumed to be bounded. For any $j= 1,\dots,J$ and $k = 1,\dots N$, we can bound that element of the sum as
\begin{equation*}
\begin{aligned}
    |\mathcal{W}[y(\cdot,\bb{x})](\sigma_j,\tau_k+\Delta t) \Gamma_{\sigma_j,\tau_k}(0)| \leq |\mathcal{W}[y(\cdot,\bb{x})](\sigma_j,\tau_k+\Delta t)| |\Gamma_{\sigma_j,\tau_k}(0)|.
\end{aligned}
\end{equation*}
Applying \cref{eq: bd_wc,eq: mwave_bd} we get the following estimate for any $m \in \N$:
\begin{equation*}
\begin{aligned}
    |\mathcal{W}[y(\cdot,\bb{x})](\sigma_j,\tau_k+\Delta t)| |\Gamma_{\sigma_j,\tau_k}(0)|\leq \|y(\cdot,\bb{x})\|_{L_\infty} \|\Gamma\|_{L_1}C_m |\sigma|^{-1}\left(1+\left|\dfrac{\tau_k}{\sigma_j}\right|\right)^{-m}.
\end{aligned}
\end{equation*}
Then for any $m\in \N$, we can write
\begin{equation*}
\begin{aligned}
    \Bigl|\sum_{j=1}^J \sum_{k=2}^N \alpha_{j,k} \mathcal{W}[y(\cdot,\bb{x})]&(\sigma_j,\tau_k+\Delta t) \Gamma_{\sigma_j,\tau_k}(0)\Bigr| \\
                &\leq \sum_{j=1}^J \sum_{k=2}^N |\alpha_{j,k}| \left|\mathcal{W}[y(\cdot,\bb{x})](\sigma_j,\tau_k+\Delta t)\right|\left|\Gamma_{\sigma_j,\tau_k}(0)\right| \\
                &= \sum_{j=1}^J \sum_{k=2}^N |\alpha_{j,k}| \|y(\cdot,\bb{x})\|_{L_\infty}\|\Gamma\|_{L_1}C_m|\sigma|^{-1}\left(1+\left|\dfrac{\tau_k}{\sigma_j}\right|\right)^{-m}.
\end{aligned}
\end{equation*}
Recall that we picked $\tau_{\text{min}}$ such that $|\tau_k| \geq \tau_{\text{min}}$. Using that fact, we bound this uniformly with respect to $\tau$ as
\begin{equation*}
\begin{aligned}
    \Bigl|\sum_{j=1}^J \sum_{k=2}^N \alpha_{j,k} \mathcal{W}[y(\cdot,\bb{x})]&(\sigma_j,\tau_k+\Delta t) \Gamma_{\sigma_j,\tau_k}(0)\Bigr| \\
                &\leq \sum_{j=1}^J \sum_{k=2}^N |\alpha_{j,k}| \|y(\cdot,\bb{x})\|_{L_\infty}\|\Gamma\|_{L_1}C_m|\sigma|^{-1}\left(1+\left|\dfrac{\tau_{\text{min}}}{\sigma_j}\right|\right)^{-m} \\
                &= \|y(\cdot,\bb{x})\|_{L_\infty}\|\Gamma\|_{L_1}C_m|\sigma|^{-1}\sum_{j=1}^J \left(1+\left|\dfrac{\tau_{\text{min}}}{\sigma_j}\right|\right)^{-m} \sum_{k=2}^N |\alpha_{j,k}|. 
\end{aligned}
\end{equation*}
Since $m\in \N$ was arbitrary, the contribution of terms that involve $\tau_k$ when $k > 1$ is negligible. Hence, one can disregard the terms with $\tau_k$ for $k > 1$ in the inner sum at \cref{eq: Kdtfirstapprox} and obtain the approximation 
\begin{equation}\label{eq: instant_freq}
    \mathcal{K}_{\Delta t}[g](\bb{x}) \approx \sum_{j=1}^J \alpha_{j,1}\mathcal{W}[y(\cdot,\bb{x})](\sigma_j,\Delta t) \Gamma_{\sigma_j,0}(0).
\end{equation}

In summary, we have shown that under some assumptions on $\Gamma$, equation \cref{eq: Kdt_WT} can be closely approximated by \cref{eq: instant_freq} for any $\Delta t \geq 0$. This amounts to saying that $\mathcal{K}_{\Delta t}[g](\bb{x})$ is approximately in the span of $\mathcal{W}[y(\cdot,\bb{x})](\sigma_j,\Delta t)$ where $\sigma_j$ are the quadrature nodes for the outer integral approximation in \cref{eq: Kdtfirstapprox}. Let $\psi_{\sigma_j}$ be defined as
\begin{equation*}
    \psi_{\sigma_j}(\bb{x}) =
    \mathcal{W}[y(\cdot,\bb{x})](\sigma_j,0),~ \textrm{for } j = 1,\dots, M.
\end{equation*}
In other words, $\psi_{\sigma_j}(\bb{x})$ is the wavelet transform applied to the output function with initial condition $\bb{x}$ at scale parameter $\sigma_j$ and shift parameter $\tau = 0$. Since $\mathcal{W}[y(\cdot,\bb{x})](\sigma_j,\Delta t)$ are the $\Delta t$ time-shifted version of $\mathcal{W}[y(\cdot,\bb{x})](\sigma_j,0)$, we expect to have 
\begin{equation}\label{eq: Kdt_wave}
      \K_{\Delta t}[\psi_{\sigma_j}] = \mathcal{W}[y(\cdot,\bb{x})](\sigma_j,\Delta t).
\end{equation}
This will be made precise later in \cref{lem: Koop_wt_obs}. 

Hence \cref{eq: instant_freq} amounts to saying that $\K_{\Delta t}[g]$ is approximately in the span of $\K_{\Delta t}[\psi_{\sigma_j}]$. So, any Koopman invariant subspace that contains $\psi_{\sigma_j}$ approximately contains $\K_{\Delta t}[g]$ for all $\Delta t > 0$. This suggests that the subspace captures the output behavior of \cref{eq: out_DS}. Thus the subspace is a good candidate for EDMD approximation. Then, if we pick our observables as a finite subset of the \emph{wavelet-based observables} defined for any $\sigma\in \R$ as  
\begin{equation}\label{eq: wave_obs}
    \psi_{\sigma}(\bb{x}) =
    \mathcal{W}[y(\cdot,\bb{x})](\sigma,0),
\end{equation}
we should expect to have a subspace that describes \cref{eq: out_DS} well, provided we choose $\sigma_j$ accordingly. Because of this, we investigate the properties of these wavelet-based observables \cref{eq: wave_obs}.

The following lemma proves the previous assertion \cref{eq: Kdt_wave}. Moreover, it gives us a way to write the action of the Koopman semigroup on $\psi_{\sigma}$ in a more tangible form.
\begin{lemma}[Koopman semigroup on observables $\psi_{\sigma}$]\label{lem: Koop_wt_obs}
    Let $\psi_{\sigma}$ be defined as in \cref{eq: wave_obs} and consider the dynamical system \cref{eq: out_DS} with $\T$ being globally Lipschitz and $g$ continuous and let $\Gamma \in L_1(\R)\cap L_2(\R)$. Then, for any $\Delta t > 0$ the action of the associated Koopman semigroup element $\K_{\Delta t}$ can be written as
    \begin{equation*}
        \K_{\Delta t}[\psi_\sigma](\bb{x}) = \mathcal{W}[y(\cdot,\bb{x})](\sigma,\Delta t). \tag{\ref{eq: Kdt_wave}}
    \end{equation*}
    Moreover, the action of the Koopman generator on $\psi_{\sigma}$ at any point $\bb{x}\in\M$ can be expressed as
    \begin{equation}\label{eq: Koop_obs}
        \mathcal{K}[\psi_{\sigma}](\bb{x}) = \lim_{\Delta t\downarrow 0} \int_{t\in\R} y(t,\bb{x}) \overline{\left(\dfrac{\Gamma_{\sigma,0}(t-\Delta t) - \Gamma_{\sigma,0}(t)}{\Delta t}\right)}\d t.
    \end{equation}
    where $\Gamma_{\sigma,0}$ is as defined in~\cref{eq: mothersc}. 
\end{lemma}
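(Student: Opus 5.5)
The plan is to prove \cref{eq: Kdt_wave} first, directly from the definitions, and then obtain \cref{eq: Koop_obs} by writing the generator's difference quotient as a single integral.

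\textbf{The semigroup identity.} By \cref{def: cKoop} and \cref{eq: wave_obs},
\[
\K_{\Delta t}[\psi_\sigma](\bb{x}) = \psi_\sigma(\Phi(\Delta t,\bb{x})) = \mathcal{W}[y(\cdot,\Phi(\Delta t,\bb{x}))](\sigma,0).
\]
The flow property $\Phi(t,\Phi(\Delta t,\bb{x})) = \Phi(t+\Delta t,\bb{x})$ holds for all $t\in\R$, because Picard--Lindel\"of gives global existence and uniqueness under the global Lipschitz assumption. It yields
\[
y(t,\Phi(\Delta t,\bb{x})) = y(t+\Delta t,\bb{x}),
\]
so the output trajectory started at $\Phi(\Delta t,\bb{x})$ is the time-shift $y_{\Delta t}$ of $y(\cdot,\bb{x})$. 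Translation covariance \cref{eq: shift_inv} then gives $\mathcal{W}[y(\cdot+\Delta t,\bb{x})](\sigma,0) = \mathcal{W}[y(\cdot,\bb{x})](\sigma,\Delta t)$, which is \cref{eq: Kdt_wave}.

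All the wavelet transforms involved are finite integrals. Since $y(\cdot,\bb{x})$ is continuous and bounded (take $g$ bounded, as in the standing assumptions), the bound \cref{eq: bd_wc} with $\Gamma\in L_1(\R)$ shows $|\mathcal{W}[y(\cdot,\bb{x})](\sigma,\tau)| \le \|y(\cdot,\bb{x})\|_{L_\infty}\|\Gamma\|_{L_1}$. I would make this boundedness explicit, because the lemma as stated only says "continuous".

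\textbf{The generator formula.} Using \cref{eq: Kdt_wave} and the definition \cref{eq: WT}, together with $\Gamma_{\sigma,\Delta t}(t) = \Gamma_{\sigma,0}(t-\Delta t)$ (immediate from \cref{eq: mothersc}), I write
\[
\frac{\K_{\Delta t}[\psi_\sigma](\bb{x}) - \psi_\sigma(\bb{x})}{\Delta t} = \int_{\R} y(t,\bb{x})\,\overline{\left(\frac{\Gamma_{\sigma,0}(t-\Delta t)-\Gamma_{\sigma,0}(t)}{\Delta t}\right)}\,\d t .
\]
Both integrals converge absolutely, by boundedness of $y$ and $\Gamma\in L_1$, so they can be combined by linearity. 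Taking $\Delta t\downarrow 0$ and using \cref{eq:Kcont} pointwise at $\bb{x}$ gives \cref{eq: Koop_obs}. The formula is read as: the generator acts on $\psi_\sigma$ whenever this limit exists, and its value is the limit.

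\textbf{Main obstacle.} The delicate point is the sense in which the limit defines $\K[\psi_\sigma]$. The generator of the $C_0$-semigroup from \cref{thm: KoopC0ode} is a limit in $\|\cdot\|_\infty$, whereas the computation above is pointwise. I would state the result as a pointwise identity: if $\psi_\sigma$ lies in $\dom(\K)$, then the norm limit agrees with the pointwise limit, so \cref{eq: Koop_obs} holds.

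For completeness, I would also check that the integral is well-defined for each $\Delta t>0$. This holds because $\Gamma_{\sigma,0}(\cdot-\Delta t)-\Gamma_{\sigma,0}\in L_1(\R)$. If $\Gamma$ is in addition differentiable with $\Gamma'\in L_1(\R)$, dominated convergence (or continuity of translation in $L_1$ applied to the difference quotient) identifies the limit as $-\int_{\R} y(t,\bb{x})\,\overline{\Gamma_{\sigma,0}'(t)}\,\d t$. That is not required for the statement as written, though.
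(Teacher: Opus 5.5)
Your proposal is correct and follows essentially the paper's argument. The paper likewise writes $\K_{\Delta t}[\psi_\sigma](\bb{x})$ as the wavelet transform of the shifted trajectory $y(\cdot+\Delta t,\bb{x})$ and performs the change of variables $t\to t-\Delta t$ explicitly, which is exactly the translation covariance you invoke; it then combines the two integrals in the difference quotient. Your added remarks are legitimate refinements that the paper leaves implicit: boundedness of $g$ for integrability, the flow property for all $t\in\R$, and the pointwise versus $\|\cdot\|_\infty$ meaning of the generator limit.
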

\begin{proof}
    Using~\cref{eq: Kdelta}, we can write the action of any element of the semigroup $\mathcal{K}_{\Delta t}$ on $\psi_{\sigma}$ as
    \begin{equation*} 
        \mathcal{K}_{\Delta t}[\psi_{\sigma}](\bb{x}) = \int_{t\in\R} y(t,\bb{x}_+)\overline{\Gamma_{\sigma,0}(t)} \d t = \int_{t\in\R} y(t+\Delta t,\bb{x})\overline{\Gamma_{\sigma,0}(t)} \d t,
    \end{equation*}
    where $\bb{x}_+ := \Phi(\Delta t,\bb{x})$. Applying the change of variables $t \to t-\Delta t$ we can rewrite this integral as
    \begin{equation}\label{eq: Kdeltapsi}
        \int_{t\in\R} y(t+\Delta t,\bb{x})\overline{\Gamma_{\sigma,0}(t)} \d t = \int_{t\in\R} y(t,\bb{x})\overline{\Gamma_{\sigma,0}(t-\Delta t)} \d t.
    \end{equation}
    Using \cref{eq: mothersc} and \cref{eq: WT} we can rewrite \cref{eq: Kdeltapsi} as 
    \begin{equation*}
        \int_{t\in\R} y(t,\bb{x})\overline{\Gamma_{\sigma,0}(t-\Delta t)} \d t = \int_{t\in\R} y(t,\bb{x})\overline{\Gamma_{\sigma,\Delta t}(t)} \d t = \mathcal{W}[y(\cdot,\bb{x})](\sigma,\Delta t).
    \end{equation*}
    Hence \cref{eq: Kdt_wave} holds. 
    The generator is defined as in~\cref{eq:Kcont}. Applying~\cref{eq: Kdeltapsi} in \cref{eq:Kcont} we obtain
    \begin{equation*}
        \mathcal{K}[\psi_{\sigma}](\bb{x}) = \lim_{\Delta t\downarrow 0} \dfrac{\mathcal{K}_{\Delta t}[\psi_{\sigma}](\bb{x}) - \psi_{\sigma}(\bb{x})}{\Delta t}= \lim_{\Delta t\downarrow 0} \int_{t\in\R} y(t,\bb{x}) \overline{\left(\dfrac{\Gamma_{\sigma,0}(t-\Delta t) - \Gamma_{\sigma,0}(t)}{\Delta t}\right)} \d t
    \end{equation*}  
    proving \cref{eq: Koop_obs}.
\end{proof}
\cref{lem: Koop_wt_obs} tells us that the time shifted wavelet transform describes the evolution of the wavelet-based observables $\psi_\sigma$ with respect to the Koopman semigroup hence giving us a simple expression for the action of the Koopman semigroup over $\psi_{\sigma}$. For the case where $y(\cdot,\bb{x})$ and $\Gamma_{\sigma,0}$ are smooth enough, we can switch the order of limits to obtain
\begin{equation*}
    \mathcal{K}[\psi_{\sigma}](\bb{x}) = -\int_{t\in \R} y(t,\bb{x}) \overline{\left( \dfrac{\d}{\d t}\Gamma_{\sigma,0}(t)\right)} \d t.
\end{equation*}
Using \cref{eq: WT}, this can be equivalently written as the derivative of the wavelet transform at zero with respect to the shift parameter, i.e.,
\begin{equation}\label{eq: cKoop_smooth}
    \mathcal{K}[\psi_{\sigma}](\bb{x}) = \dfrac{\d}{\d \tau}\mathcal{W}[y(\cdot,\bb{x})](\sigma,0).
\end{equation}
In wavelet analysis, the shift derivative of the wavelet is used in singularity detection \cite{SingDetect}. Moreover, it is used in \cite{wsst} for post-processing the wavelet transform leading to the wavelet synchrosqueezing transform (WSST). Simply put, in WSST one reassigns the shift and scale parameters after the wavelet transform to ``sharpen'' the time-frequency representation. For this reallocation, they use the shift derivative of the wavelet. Indeed, they prove that under some assumptions on the signal $f$, one can estimate the instantaneous frequency $\omega_f(\sigma,\tau)$ as
\begin{equation*}
    \omega_f(\sigma,\tau) \approx -\imunit \left(\mathcal{W}[f](\sigma,\tau)\right)^{-1}\dfrac{\d}{\d \tau}\mathcal{W}[f](\sigma,\tau).
\end{equation*}
when $\mathcal{W}[f](\sigma,\tau)\neq 0$ \cite[Theorem 3.3]{wsst}. Evaluating this at $\tau = 0$ we obtain
\begin{equation*}
    \dfrac{\d}{\d \tau}\mathcal{W}[f](\sigma,0) \approx \imunit \omega_f(\sigma,0)\mathcal{W}[f](\sigma,0).
\end{equation*}
For a given $\bb{x}$, if we let $f(\cdot) = y(\cdot,\bb{x})$ and apply \cref{eq: cKoop_smooth} this becomes 
\begin{equation}\label{eq: sstKoop}
    \mathcal{K}[\psi_{\sigma}](\bb{x}) = \dfrac{\d}{\d \tau}\mathcal{W}[y(\cdot,\bb{x})](\sigma,0) \approx \imunit \omega_f(\sigma,0)\mathcal{W}[y(\cdot,\bb{x})](\sigma,0) = \imunit \omega_f(\sigma,0)\psi_{\sigma}(\bb{x}).
\end{equation}
Hence the Koopman operator acts approximately like a multiplication operator on these wavelet-based observables. This special structure will help us understand the spectral properties of the Koopman operator. 

\subsection{Wavelet-based observables for modulated Gaussians}\label{ss: wbobs_Morl}
In this section, we start by analyzing \cref{eq: sstKoop} for a specific $\Gamma$ using the $C_0$-semigroup structure proposed by \cref{thm: KoopC0ode}. By this analysis, we show that for this specific $\Gamma$, $\psi_\sigma$ becomes the eigenfunctions of the associated Koopman semigroup $(\K_{\Delta t})_{\Delta t \geq 0}$ on $(C(\Omega),\|\cdot\|_\infty)$.

For a $\omega_0\in \R$ we will denote by $\Psi(\cdot; \omega_0)$ the modulated Gaussian, i.e.,
\begin{equation}\label{eq: modGauss}
    \Psi(t;\omega_0) := e^{\imunit\omega_0 t}e^{-t^2/2}
\end{equation}
with $\imunit$ denoting the imaginary unit. We will start by showing that the wavelet-based observables $\psi_\sigma$ are continuous if we let $\Gamma = \Psi(~\cdot~;\omega_0)$ for any $\omega_0\in \R$ and hence $\psi_\sigma\in C(\Omega)$ for any $\Omega\subseteq \M$ and $\sigma\neq 0$.
\begin{lemma}[Lipschitz continuity of wavelet-based observables for modulated Gaussian]\label{lem: wave_obs_cont}
    For any $\omega_0\in \mathbb{R}$, let $\Gamma=\Psi(~\cdot~;\omega_0)$ be the modulated Gaussian. Consider the dynamical system \cref{eq: out_DS}, and assume that $\mathcal{T}$ and $g$ are globally Lipschitz with Lipschitz constants $L$ and $M$, respectively. Then, for every $\sigma\neq 0$, the wavelet-based observables are Lipschitz continuous on $\mathcal{M}$.
\end{lemma}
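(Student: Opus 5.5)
We write out $\psi_\sigma(\bb{x}) - \psi_\sigma(\bb{z})$ directly from \cref{eq: wave_obs} and \cref{eq: WT}. For $\bb{x},\bb{z}\in\M$,
\begin{equation*}
    |\psi_\sigma(\bb{x})-\psi_\sigma(\bb{z})| \le \int_{t\in\R} |g(\Phi(t,\bb{x}))-g(\Phi(t,\bb{z}))|\,|\Gamma_{\sigma,0}(t)|\,\d t \le M\int_{t\in\R}\|\Phi(t,\bb{x})-\Phi(t,\bb{z})\|\,|\sigma|^{-1}e^{-t^2/(2\sigma^2)}\,\d t .
\end{equation*}
Here we use that $|\Psi(t;\omega_0)| = e^{-t^2/2}$, so the modulation plays no role in absolute value. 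Along the way we should also note that $\psi_\sigma$ is well defined: $y(\cdot,\bb{x})$ is bounded because $g$ is bounded (as assumed in \cref{eq: out_DS}), and $\Gamma\in L_1(\R)$, so the estimate \cref{eq: bd_wc} applies.

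The second ingredient is Grönwall's inequality for the flow. Since $\T$ is globally Lipschitz with constant $L$, we have
\begin{equation*}
    \frac{\d}{\d t}\bigl(\Phi(t,\bb{x})-\Phi(t,\bb{z})\bigr)=\T(\Phi(t,\bb{x}))-\T(\Phi(t,\bb{z})).
\end{equation*}
Integrating from $0$ to $t$ for $t\ge 0$, or from $t$ to $0$ for $t<0$ (the flow is defined for all $t\in\R$ by Picard--Lindelöf), gives
\begin{equation*}
    \|\Phi(t,\bb{x})-\Phi(t,\bb{z})\|\le e^{L|t|}\|\bb{x}-\bb{z}\| \quad\text{for all } t\in\R.
\end{equation*}

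Substituting this bound, the Lipschitz constant becomes
\begin{equation*}
    M|\sigma|^{-1}\int_{\R} e^{L|t|}e^{-t^2/(2\sigma^2)}\,\d t .
\end{equation*}
This integral is finite because the Gaussian decay dominates the exponential growth. Completing the square gives the explicit bound
\begin{equation*}
    \int_{\R} e^{L|t|-t^2/(2\sigma^2)}\,\d t \le 2\int_{\R} e^{Lt-t^2/(2\sigma^2)}\,\d t = 2\sqrt{2\pi}\,|\sigma|\,e^{L^2\sigma^2/2}.
\end{equation*}
Hence $|\psi_\sigma(\bb{x})-\psi_\sigma(\bb{z})|\le 2\sqrt{2\pi}\,M e^{L^2\sigma^2/2}\|\bb{x}-\bb{z}\|$, which is the claim for every $\sigma\neq 0$.

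The main obstacle is this finiteness step. Trajectories may separate exponentially in both time directions, since the integral runs over all of $\R$. The argument therefore relies essentially on the super-exponential decay of the Gaussian envelope: for a merely rapidly decaying wavelet, as in \cref{eq: mwave_bd}, polynomial decay would not beat $e^{L|t|}$, and the proof would break down. A secondary point is justifying Grönwall's inequality for negative times, which is needed because the wavelet transform integrates over the backward trajectory as well.
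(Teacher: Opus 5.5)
Your proof is correct and follows the same route as the paper: bound $\|\Phi(t,\bb{x})-\Phi(t,\bb{z})\|\le e^{L|t|}\|\bb{x}-\bb{z}\|$ for all $t\in\R$, use the Lipschitz constant $M$ of $g$ and the Gaussian envelope $|\Gamma_{\sigma,0}(t)|=|\sigma|^{-1}e^{-t^2/(2\sigma^2)}$, and reduce everything to the finiteness of $\int_{\R}e^{L|t|-t^2/(2\sigma^2)}\,\d t$. The only difference is that you evaluate this integral by completing the square, which gives the explicit Lipschitz constant $2\sqrt{2\pi}\,M e^{L^2\sigma^2/2}$, whereas the paper only notes that the integral converges.
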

\begin{proof}
    Since $\T$ is Lipschitz continuous, the ODE at \cref{eq: out_DS} depends continuously on the initial condition. Specifically, for two initial conditions $\bb{x},\bb{z}\in \M$, and $t \in \R$ we have the following bound
    \begin{equation*}
        \|\Phi(t,\bb{x}) - \Phi(t,\bb{z})\| \leq \exp(L|t|)\|\bb{x}-\bb{z}\|,
    \end{equation*}
    where $L$ denotes the Lipschitz constant of $\T$. Proof of this can be found in any standard ODE textbook, see, e.g., \cite[Theorem 2.8]{TeschlODE}. Then, since $g$ is also Lipschitz continuous with Lipschitz constant $M$ we have that
    \begin{equation}\label{eq: odeCIC}
    \begin{aligned}
        |y(t,\bb{x}) - y(t,\bb{z})| &= |g\circ\Phi(t,\bb{x}) - g\circ\Phi(t,\bb{x})| \\
                    &\leq M\|\Phi(t,\bb{x}) - \Phi(t,\bb{z})\|  \\
                    &\leq M\exp(L|t|)\|\bb{x}-\bb{z}\|.
    \end{aligned}
    \end{equation}
    Then, for a fixed $\sigma > 0$, consider $|\psi_\sigma(\bb{x}) - \psi_\sigma(\bb{z})|$. Using \cref{eq: wave_obs}, we can rewrite this as
    \begin{align*}
        |\psi_\sigma(\bb{x}) - \psi_\sigma(\bb{z})| = |\sigma|^{-1}\left|\int_{t=-\infty}^\infty \left(y(t,\bb{x})- y(t,\bb{z})\right)\exp(-\imunit \omega_0 t/\sigma)\exp( - t^2/2\sigma^2) \d t\right|.
    \end{align*}
    We can push the absolute value inside the integral to get
    \begin{equation*}
        |\psi_\sigma(\bb{x}) - \psi_\sigma(\bb{z})| \leq |\sigma|^{-1}\int_{t=-\infty}^\infty |y(t,\bb{x})- y(t,\bb{z})||\exp( - t^2/2\sigma^2)| \d t.
    \end{equation*}
    Using \cref{eq: odeCIC} we can bound the integral by
    \begin{align*}
        |\psi_\sigma(\bb{x}) - \psi_\sigma(\bb{z})| &\leq |\sigma|^{-1}\int_{t=-\infty}^\infty M\exp(L|t|)\|\bb{x}-\bb{z}\||\exp( - t^2/2\sigma^2)| \d t \\
                        &= |\sigma|^{-1}M\|\bb{x}-\bb{z}\|\int_{t=-\infty}^\infty \exp(L|t| - t^2/2\sigma^2) \d t.
    \end{align*}
    Since the integral at the right is convergent, $\psi_\sigma$ is Lipschitz continuous on $\M$.
\end{proof}
Note that this proof can be extended to the case where $g$ is continuous if we relax \cref{lem: wave_obs_cont} and let $\psi_\sigma$ to be continuous rather than Lipschitz continuous. 

\Cref{lem: wave_obs_cont} together with \cref{eq: bd_wc} suggests that, for this choice of $\Gamma$, the wavelet-based observables are bounded, continuous functions for any $\M$. Consequently, these observables lie in many of the function spaces where Koopman analysis is typically performed. Our next step is to show that $\psi_\sigma$ is invariant under the action of the Koopman semigroup. This will then prove that observables $\psi_\sigma$ are good candidates for the EDMD approximation. In order to show that, we first derive the following bound.

\begin{theorem}\label{thm: KoopEfn_point}
Consider the dynamical system \cref{eq: out_DS} and assume that $\T$ is globally Lipschitz and $g$ is continuous. Let $\Gamma = \Psi(~\cdot~;\omega_0)$ be the modulated Gaussian for a fixed $\omega_0\in \R$. Then the wavelet-based observables $\psi_{\sigma}(\bb{x})$ as defined in \cref{eq: wave_obs} satisfy the following bound for any $\bb{x}\in \M$ and $\Delta t>0$
\begin{equation}\label{eq: wt_inv_point}
    \left|\K_{\Delta t}[\psi_{\sigma}](\bb{x}) - \exp\left(\dfrac{\imunit w_0\Delta t}{\sigma}-\dfrac{\Delta t^2}{2\sigma^2}\right)\psi_{\sigma}(\bb{x})\right| \leq |\sigma|\sqrt{2\pi}\left(\exp\left(\dfrac{\Delta t^2}{2\sigma^2}\right) - 1\right)\max_{t \in\R} |y(t,\bb{x})|.
\end{equation}
Moreover this bound is quadratic in $\Delta t$.
\end{theorem}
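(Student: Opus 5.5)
The plan is to compute $\K_{\Delta t}[\psi_\sigma](\bb{x})$ explicitly using \cref{lem: Koop_wt_obs}, which gives
\[
\K_{\Delta t}[\psi_\sigma](\bb{x})=\int_{t\in\R} y(t,\bb{x})\,\overline{\Gamma_{\sigma,0}(t-\Delta t)}\,\d t ,
\]
and then to exploit the Gaussian structure of $\Gamma=\Psi(\cdot;\omega_0)$. By \cref{eq: mothersc},
\[
\Gamma_{\sigma,0}(t-\Delta t)=|\sigma|^{-1}e^{\imunit\omega_0(t-\Delta t)/\sigma}e^{-(t-\Delta t)^2/2\sigma^2}.
\]
Expanding $(t-\Delta t)^2=t^2-2t\Delta t+\Delta t^2$ factors the shifted wavelet as
\[
\overline{\Gamma_{\sigma,0}(t-\Delta t)}=\exp\Bigl(\tfrac{\imunit\omega_0\Delta t}{\sigma}-\tfrac{\Delta t^2}{2\sigma^2}\Bigr)\,\overline{\Gamma_{\sigma,0}(t)}\,e^{t\Delta t/\sigma^2}.
\]
So the shifted wavelet is exactly the claimed eigenvalue factor times the unshifted wavelet, up to the multiplicative perturbation $e^{t\Delta t/\sigma^2}$.

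Subtracting $\exp(\imunit\omega_0\Delta t/\sigma-\Delta t^2/2\sigma^2)\psi_\sigma(\bb{x})$ therefore leaves the single integral
\[
\exp\Bigl(\tfrac{\imunit\omega_0\Delta t}{\sigma}-\tfrac{\Delta t^2}{2\sigma^2}\Bigr)\int_{\R} y(t,\bb{x})\,|\sigma|^{-1}e^{-\imunit\omega_0 t/\sigma}e^{-t^2/2\sigma^2}\bigl(e^{t\Delta t/\sigma^2}-1\bigr)\d t .
\]
I would take absolute values inside and pull out $\max_t|y(t,\bb{x})|$, which is finite since $y=g\circ\Phi$ with $g$ bounded as in the standing assumptions. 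The prefactor has modulus $e^{-\Delta t^2/2\sigma^2}\le 1$, and the oscillatory factor has modulus one. This reduces everything to bounding
\[
I:=\int_\R e^{-t^2/2\sigma^2}\,\bigl|e^{t\Delta t/\sigma^2}-1\bigr|\,\d t .
\]

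The main step is handling the absolute value in $I$. Write $a=t\Delta t/\sigma^2$. For $a\ge 0$ we have $|e^a-1|=e^a-1$, and for $a<0$ we have $|e^a-1|=1-e^{a}\le e^{-a}-1$. Using the evenness of the Gaussian to fold $t<0$ onto $t>0$, this gives
\[
I\le \int_0^\infty e^{-t^2/2\sigma^2}\bigl(e^{a}+e^{-a}-2\bigr)\d t=\int_\R e^{-t^2/2\sigma^2}\bigl(e^{t\Delta t/\sigma^2}-1\bigr)\d t .
\]
Completing the square, the right-hand side equals $|\sigma|\sqrt{2\pi}\bigl(e^{\Delta t^2/2\sigma^2}-1\bigr)$. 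Combining this with the prefactor $|\sigma|^{-1}e^{-\Delta t^2/2\sigma^2}$ and $\max_t|y|$ yields a bound of the form \cref{eq: wt_inv_point}.

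I expect the bookkeeping of the normalization factor $|\sigma|^{-1}$ against the $|\sigma|$ in the stated bound to be the point needing care. My computation actually gives $\sqrt{2\pi}\bigl(1-e^{-\Delta t^2/2\sigma^2}\bigr)\max_t|y|$. The stated right-hand side dominates this whenever the $|\sigma|^{\pm1}$ factors are reconciled; in particular $1-e^{-s}\le e^{s}-1$ handles the exponential part, and the factor $|\sigma|$ needs checking against the normalization in \cref{eq: mothersc}. Finally, the quadratic dependence follows from $e^{s}-1=s+O(s^2)$ with $s=\Delta t^2/2\sigma^2$, so the right-hand side is $\tfrac{\sqrt{2\pi}}{2|\sigma|}\Delta t^2\max_t|y|+O(\Delta t^4)$.
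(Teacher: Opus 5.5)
Your reduction to $I$ is sound, and putting the absolute value inside the integral is the right move. The folding step that follows is wrong, however. Folding $t<0$ onto $t>0$ gives exactly
\[
I=\int_0^\infty e^{-t^2/2\sigma^2}\bigl(e^{a}-e^{-a}\bigr)\,\d t=2\int_0^\infty e^{-t^2/2\sigma^2}\sinh a\,\d t,\qquad a=\frac{t\Delta t}{\sigma^2}.
\]
The integral you compare it with equals $2\int_0^\infty e^{-t^2/2\sigma^2}(\cosh a-1)\,\d t$. Since $\sinh a-(\cosh a-1)=1-e^{-a}\ge 0$ for $a\ge 0$, your inequality actually holds in the opposite direction. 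Your own pointwise estimate $|e^a-1|\le e^{|a|}-1$ only gives $I\le 2\int_0^\infty e^{-t^2/2\sigma^2}(e^{a}-1)\,\d t$.

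The integral $\int_\R e^{-t^2/2\sigma^2}(e^{t\Delta t/\sigma^2}-1)\,\d t$ is $O(\Delta t^2)$ only because the first-order contributions from $t>0$ and $t<0$ cancel. The absolute value destroys exactly that cancellation. In fact $I=|\sigma|\sqrt{2\pi}\,e^{\Delta t^2/2\sigma^2}\,\mathrm{erf}\bigl(\Delta t/(\sqrt{2}|\sigma|)\bigr)$, and $I\sim 2\Delta t$ as $\Delta t\downarrow 0$. So your route yields only the first-order bound $\sqrt{2\pi}\,\mathrm{erf}\bigl(\Delta t/(\sqrt{2}|\sigma|)\bigr)\max_t|y(t,\bb{x})|\approx 2\Delta t\max_t|y(t,\bb{x})|/|\sigma|$.

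No sharper argument can close this gap, because the stated quadratic bound is false. Take the following setting:
\begin{itemize}
\item $\omega_0=0$ and $\sigma=1$;
\item the harmonic oscillator $\dot x_1=x_2$, $\dot x_2=-x_1$ on the closed unit disc, with $g(\bb{x})=x_1$;
\item the point $\bb{x}=(0,1)^\top$, so that $y(t,\bb{x})=\sin t$.
\end{itemize}
Then $\mathcal{W}[y(\cdot,\bb{x})](1,\tau)=\sqrt{2\pi}e^{-1/2}\sin\tau$, hence $\psi_1(\bb{x})=0$. The left-hand side of the stated inequality is $\sqrt{2\pi}e^{-1/2}|\sin\Delta t|\approx 1.52\,\Delta t$. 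The right-hand side is $\sqrt{2\pi}(e^{\Delta t^2/2}-1)\approx 1.25\,\Delta t^2$. At $\Delta t=0.1$ these are about $0.152$ and $0.0126$.

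The same example shows that the derivative of $\Delta t\mapsto\K_{\Delta t}[\psi_1](\bb{x})$ at $\Delta t=0$ equals $\sqrt{2\pi}e^{-1/2}\neq 0=\frac{\imunit\omega_0}{\sigma}\psi_1(\bb{x})$. So the eigenfunction property that is meant to follow from this bound fails as well.

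The paper's own proof reaches the quadratic bound by pulling $\max_t|y|$ out while keeping the absolute value outside $\int e^{-t^2/2\sigma^2}(e^{t\Delta t/\sigma^2}-1)\,\d t$. That step is invalid because this integrand changes sign at $t=0$. Your placement of the absolute value is precisely what exposes the first-order term. Your concern about the $|\sigma|^{\pm1}$ bookkeeping is also justified: the stated right-hand side carries a factor $|\sigma|$ that the computation does not produce. That issue is secondary, though.
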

\begin{proof}
    By \cref{lem: Koop_wt_obs} we have
    \begin{equation*}
    \begin{aligned}
        \K_{\Delta t}[\psi_\sigma](\bb{x}) &= \mathcal{W}[y(\cdot,\bb{x})](\sigma,\Delta t) \\
        &= |\sigma|^{-1}\int_{t\in\R}y(t,\bb{x})\exp\left(\dfrac{-\imunit\omega_0(t-\Delta t)}{\sigma}\right)\exp\left(-\dfrac{(t-\Delta t)^2}{2\sigma^2}\right) \d t.
    \end{aligned}
    \end{equation*}
    By distributing the square and taking the constants out of the integral we get
    \begin{align}
        \K_{\Delta t}&[\psi_\sigma](\bb{x}) = |\sigma|^{-1}\exp\left(\dfrac{\imunit\omega_0 \Delta t}{\sigma}\right)\int_{t\in\R}y(t,\bb{x})\exp\left(\dfrac{-\imunit\omega_0t}{\sigma}\right)\exp\left(-\dfrac{(t-\Delta t)^2}{2\sigma^2}\right) \d t \nonumber\\
                &= |\sigma|^{-1}\exp\left(\dfrac{\imunit\omega_0 \Delta t}{\sigma}\right)\int_{t\in\R}y(t,\bb{x})\exp\left(\dfrac{-\imunit\omega_0t}{\sigma}\right)\exp\left(-\dfrac{t^2-2t\Delta t+\Delta t^2}{2\sigma^2}\right) \d t \nonumber\\
                &= |\sigma|^{-1}\exp\left(\dfrac{\imunit\omega_0 \Delta t}{\sigma}-\dfrac{\Delta t^2}{2\sigma^2}\right)\int_{t\in\R}y(t,\bb{x})\exp\left(\dfrac{-\imunit\omega_0t}{\sigma}\right)\exp\left(-\dfrac{t^2}{2\sigma^2}\right)\exp\left(\dfrac{t \Delta t}{\sigma^2}\right) \d t.\label{eq: Kdt_wobs_first}
    \end{align}
    We focus on the integral in \cref{eq: Kdt_wobs_first}. By adding and subtracting one we get
    \begin{equation*}
    \begin{aligned}
        \int_{t\in\R}y(t,\bb{x})&\exp\left(\dfrac{-\imunit\omega_0t}{\sigma}\right)\exp\left(-\dfrac{t^2}{2\sigma^2}\right)\exp\left(\dfrac{t \Delta t}{\sigma^2}\right) \d t \\ 
            &= \int_{t\in\R}y(t,\bb{x})\exp\left(\dfrac{-\imunit\omega_0t}{\sigma}\right)\exp\left(-\dfrac{t^2}{2\sigma^2}\right)\left(\exp\left(\dfrac{t \Delta t}{\sigma^2}\right)-1+1\right) \d t \\
            &= \int_{t\in\R}y(t,\bb{x})\exp\left(\dfrac{-\imunit\omega_0t}{\sigma}\right)\exp\left(-\dfrac{t^2}{2\sigma^2}\right)\left(\exp\left(\dfrac{t \Delta t}{\sigma^2}\right)-1\right) \d t \\
            &\quad+ \int_{t\in\R}y(t,\bb{x})\exp\left(\dfrac{-\imunit\omega_0t}{\sigma}\right)\exp\left(-\dfrac{t^2}{2\sigma^2}\right) \d t.
    \end{aligned}
    \end{equation*}
    The second integral is exactly the wavelet-based observable evaluated at $\bb{x}$ scaled by $|\sigma|$, i.e., $|\sigma|\psi_\sigma(\bb{x})$. Hence we can rewrite \cref{eq: Kdt_wobs_first} as
    \begin{align*}
        \K_{\Delta t}[\psi_\sigma](\bb{x}) - \exp&\left(\dfrac{\imunit\omega_0 \Delta t}{\sigma}-\dfrac{\Delta t^2}{2\sigma^2}\right)\psi_\sigma(\bb{x}) =|\sigma|^{-1}\exp\left(\dfrac{\imunit\omega_0 \Delta t}{\sigma}-\dfrac{\Delta t^2}{2\sigma^2}\right)\\
            &\times\int_{t\in\R}y(t,\bb{x})\exp\left(\dfrac{-\imunit\omega_0t}{\sigma}\right)\exp\left(-\dfrac{t^2}{2\sigma^2}\right)\left(\exp\left(\dfrac{t \Delta t}{\sigma^2}\right)-1\right) \d t.
    \end{align*}
    Then, by taking the absolute values of both sides, we have the following bound
    \begin{equation}\label{eq: eFn_errbd}
    \begin{aligned}
        \Bigl| \K_{\Delta t}[\psi_\sigma](\bb{x}) &- \exp\left(\dfrac{\imunit\omega_0 \Delta t}{\sigma}-\dfrac{\Delta t^2}{2\sigma^2}\right)\psi_\sigma(\bb{x})\Bigr| \leq |\sigma|^{-1}\left|\exp\left(\dfrac{\imunit\omega_0 \Delta t}{\sigma}-\dfrac{\Delta t^2}{2\sigma^2}\right)\right| \\
            &\times\left|\int_{t\in\R}y(t,\bb{x})\exp\left(\dfrac{-\imunit\omega_0t}{\sigma}\right)\exp\left(-\dfrac{t^2}{2\sigma^2}\right)\left(\exp\left(\dfrac{t \Delta t}{\sigma^2}\right)-1\right) \d t\right|.
    \end{aligned}
    \end{equation}
    Since $y$ is bounded, one can use the following bound to approximate the integral in \cref{eq: eFn_errbd}:
    \begin{align}
        \Bigl|\int_{t\in\R}y(t,\bb{x})&\exp\left(\dfrac{-\imunit\omega_0t}{\sigma}\right)\exp\left(-\dfrac{t^2}{2\sigma^2}\right)\left(\exp\left(\dfrac{t \Delta t}{\sigma^2}\right)-1\right) \d t\Bigr| \nonumber\\
            &\leq \left(\max_{t \geq 0} \Bigl|y(t,\bb{x})\exp\left(\dfrac{-\imunit\omega_0t}{\sigma}\right)\Bigr|\right) \Bigl|\int_{t\in\R}\exp\left(-\dfrac{t^2}{2\sigma^2}\right)\left(\exp\left(\dfrac{t \Delta t}{\sigma^2}\right)-1\right) \d t\Bigr|\nonumber \\
            &\leq \left(\max_{t \geq 0} |y(t,\bb{x})|\right) \Bigl|\int_{t\in\R}\exp\left(-\dfrac{t^2}{2\sigma^2}\right)\left(\exp\left(\dfrac{t \Delta t}{\sigma^2}\right)-1\right) \d t\Bigr|.\label{eq: inv_bound_1}
    \end{align}
    We can split the integral in \cref{eq: inv_bound_1} and complete the square inside the exponential to obtain
    \begin{align}
        \int_{t\in\R}\exp\left(-\dfrac{t^2}{2\sigma^2}\right)&\left(\exp\left(\dfrac{t \Delta t}{\sigma^2}\right)-1\right) \d t \nonumber \\ 
        &= \int_{t\in\R}\exp\left(-\dfrac{t^2}{2\sigma^2}\right)\exp\left(\dfrac{t \Delta t}{\sigma^2}\right) \d t - \int_{t\in\R}\exp\left(-\dfrac{t^2}{2\sigma^2}\right) \d t.\label{eq: split_exp_1}
    \end{align}
    The second integral is $\sqrt{2\pi}|\sigma|$. For the first one, we complete the square.
    \begin{align*}
        \int_{t\in\R}\exp\left(-\dfrac{t^2}{2\sigma^2}\right)\exp\left(\dfrac{t \Delta t}{\sigma^2}\right) \d t &= \int_{t\in\R}\exp\left(\dfrac{t \Delta t}{\sigma^2}-\dfrac{t^2}{2\sigma^2}\right) \d t \\
            &= \int_{t\in\R}\exp\left(\dfrac{\Delta t^2}{2\sigma^2}-\dfrac{(t-\Delta t)^2}{2\sigma^2}\right) \d t \\
            &= \exp\left(\dfrac{\Delta t^2}{2\sigma^2}\right)\int_{t\in\R}\exp\left(-\dfrac{(t-\Delta t)^2}{2\sigma^2}\right) \d t.
    \end{align*}
    Again, the integral is known to be $\sqrt{2\pi}|\sigma|$. Hence \cref{eq: split_exp_1} becomes
    \begin{equation*}
        \int_{t\in\R}\exp\left(-\dfrac{t^2}{2\sigma^2}\right)\left(\exp\left(\dfrac{t \Delta t}{\sigma^2}\right)-1\right) \d t = \sqrt{2\pi}|\sigma|\left(\exp\left(\dfrac{\Delta t^2}{2\sigma^2}\right) - 1\right).
    \end{equation*}
    Thus the expression in \cref{eq: inv_bound_1} simplifies to 
    \begin{equation}\label{eq: inv_bound_2}
        \sqrt{2\pi}\left(\exp\left(\dfrac{\Delta t^2}{2\sigma^2}\right) - 1\right)\max_{t \in\R} |y(t,\bb{x})|.
    \end{equation}
    Note that the exponential term in \cref{eq: eFn_errbd} can be calculated as
    \begin{equation*}
        \left|\exp\left(\dfrac{\imunit\omega_0 \Delta t}{\sigma}-\dfrac{\Delta t^2}{2\sigma^2}\right)\right| = \exp\left(\Re\left(\dfrac{\imunit\omega_0 \Delta t}{\sigma}-\dfrac{\Delta t^2}{2\sigma^2}\right)\right) =\exp\left(-\dfrac{\Delta t^2}{2\sigma^2}\right).
    \end{equation*}
    Also note that this is a negative value inside exponential. Hence
    \begin{equation*}
        \left|\exp\left(\dfrac{\imunit\omega_0 \Delta t}{\sigma}-\dfrac{\Delta t^2}{2\sigma^2}\right)\right| = \exp\left(-\dfrac{\Delta t^2}{2\sigma^2}\right) \leq 1.
    \end{equation*}
    This together with \cref{eq: inv_bound_2} suggests that \cref{eq: eFn_errbd} simplifies to
    \begin{equation}\label{eq: pw_err_bd}
    \begin{aligned}
        \Bigl|\K_{\Delta t}[\psi_\sigma](\bb{x}) - \exp&\left(\dfrac{\imunit\omega_0 \Delta t}{\sigma}-\dfrac{\Delta t^2}{2\sigma^2}\right)\psi_\sigma(\bb{x})\Bigr| \\
        &\leq |\sigma|\sqrt{2\pi}\left(\exp\left(\dfrac{\Delta t^2}{2\sigma^2}\right) - 1\right)\max_{t \in\R} |y(t,\bb{x})|.
    \end{aligned}
    \end{equation}
    To show that the error decays quadratically with respect to $\Delta t$, consider the Taylor expansion of the exponential part:
    \begin{equation*}
        \exp\left(\dfrac{\Delta t^2}{2\sigma^2}\right) - 1 = \left(\sum_{n=0}^\infty \dfrac{\Delta t^{2n}}{n!2^n\sigma^{2n}}\right) -1
    \end{equation*}
    Since for $n=0$ the summand is $1$, we can rewrite this as
    \begin{equation*}
        \exp\left(\dfrac{\Delta t^2}{2\sigma^2}\right) - 1  = \sum_{n=1}^\infty \dfrac{\Delta t^{2n}}{n!2^n\sigma^{2n}}.
    \end{equation*}
    Hence the exponential term is on the order of $\Delta t^2$. Since that is the only $\Delta  t$ dependent term in \cref{eq: wt_inv_point}, we obtain quadratic convergence as $\Delta t \to 0$.
\end{proof}

This result shows that the residual at \cref{eq: wt_inv_point} is decaying quadratically \emph{for each point $\bb{x}\in \M$} as $\Delta t \to 0$. Because this decay is faster than linear, it suggests that these observables are natural candidates for eigenfunctions of the Koopman generator, and hence of the semigroup. We emphasize, however, that \cref{thm: KoopEfn_point} is a pointwise statement. By contrast, to establish that a function is an eigenfunction of the $C_0$-semigroup $(\K_{\Delta t})_{\Delta t \geq 0}$ over $C(\Omega)$, we need to verify the convergence in the uniform norm $\|\cdot\|_\infty$ over $\Omega$. For the case where $g$ is bounded, the $\|\cdot\|_\infty$ norm convergence follows from \cref{thm: KoopEfn_point}. The next theorem indeed shows that $\psi_\sigma$ are eigenfunctions of the semigroup $(\K_{\Delta t})_{\Delta t\geq 0}$ on $C(\Omega)$ where $\Omega\subseteq \M$ is compact and forward-invariant.

\begin{theorem}\label{thm: KoopEfn}
    Consider the dynamical system \cref{eq: out_DS} and assume that $\T$ is globally Lipschitz and $g$ is continuous. For a fixed $\omega_0\in \R$, let $\Gamma = \Psi(~\cdot~;\omega_0)$ be the modulated Gaussian and $\Omega$ be a compact forward-invariant subset of $\M$. Then for any $\sigma\neq 0$ the wavelet-based observable $\psi_{\sigma}$ as defined in \cref{eq: wave_obs} are eigenfunctions of the Koopman semigroup over $C(\Omega)$ with eigenvalue $\dfrac{\imunit \omega_0}{\sigma}$, i.e., for the generator $\K$ we have $\psi_\sigma\in \mathcal{D}(\K)$ and
    \begin{equation}\label{eq: KoopEfn}
        \K[\psi_\sigma] = \dfrac{\imunit \omega_0}{\sigma}\psi_{\sigma}.
    \end{equation}
\end{theorem}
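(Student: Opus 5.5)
We verify the definition of the generator in \cref{eq:Kcont} directly in the norm of $C(\Omega)$. Concretely, we will show
\begin{equation*}
    \lim_{\Delta t\downarrow 0}\left\|\dfrac{\K_{\Delta t}[\psi_\sigma]-\psi_\sigma}{\Delta t} - \dfrac{\imunit\omega_0}{\sigma}\psi_\sigma\right\|_\infty = 0 .
\end{equation*}
Two preliminary points come first. By \cref{lem: wave_obs_cont} and the remark following it, $\psi_\sigma$ is continuous on $\M$, so $\psi_\sigma\in C(\Omega)$. Since $\Omega$ is compact and forward-invariant and $g$ is continuous, we may take $\max_{t}|y(t,\bb{x})|\le \|g\|_{\infty,\Omega}=:G<\infty$ uniformly in $\bb{x}\in\Omega$. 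Strictly, only $t\ge 0$ keeps the trajectory in $\Omega$. We will therefore interpret the bound, as in the paper's standing assumption, with $g$ bounded so that $G=\sup|g|$ works for all $t\in\R$.

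The key step splits the difference quotient into two terms. Put $\lambda(\Delta t):=\exp\bigl(\imunit\omega_0\Delta t/\sigma-\Delta t^2/(2\sigma^2)\bigr)$. Then
\begin{equation*}
    \dfrac{\K_{\Delta t}[\psi_\sigma]-\psi_\sigma}{\Delta t}-\dfrac{\imunit\omega_0}{\sigma}\psi_\sigma
    = \dfrac{\K_{\Delta t}[\psi_\sigma]-\lambda(\Delta t)\psi_\sigma}{\Delta t} + \left(\dfrac{\lambda(\Delta t)-1}{\Delta t}-\dfrac{\imunit\omega_0}{\sigma}\right)\psi_\sigma .
\end{equation*}
We handle each term separately.

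\emph{First term.} \Cref{thm: KoopEfn_point} bounds it pointwise by $|\sigma|\sqrt{2\pi}\,G\,\bigl(e^{\Delta t^2/(2\sigma^2)}-1\bigr)/\Delta t$. This bound is independent of $\bb{x}$, so it also bounds the sup norm over $\Omega$. Since $e^{\Delta t^2/(2\sigma^2)}-1=O(\Delta t^2)$, the bound is $O(\Delta t)$ and tends to $0$.

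\emph{Second term.} The function $\lambda$ is differentiable with $\lambda(0)=1$ and $\lambda'(0)=\imunit\omega_0/\sigma$, so the scalar factor tends to $0$. Moreover $\|\psi_\sigma\|_\infty\le G\|\Gamma\|_{L_1}<\infty$ by \cref{eq: bd_wc}. Hence this term also tends to $0$ in $\|\cdot\|_\infty$.

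Together these show $\psi_\sigma\in\mathcal{D}(\K)$ and that \cref{eq: KoopEfn} holds. The main obstacle is uniformity in $\bb{x}$, i.e.\ making sure the constant $\max_t|y(t,\bb{x})|$ from \cref{thm: KoopEfn_point} is bounded independently of $\bb{x}$. Boundedness of $g$ resolves this, with compactness of $\Omega$ and continuity of $g$ giving the bound on forward orbits. All other steps are elementary Taylor estimates.
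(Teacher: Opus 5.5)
Your argument follows the paper's proof. You split off $\lambda(\Delta t)\psi_\sigma$, make the estimate of \cref{thm: KoopEfn_point} uniform on $\Omega$ via $\sup_{\M}|g|$, and differentiate $\lambda$ at $0$. You also correctly state the bound $\|\psi_\sigma\|_\infty\le G\|\Gamma\|_{L_1}$, which the paper's last step uses without saying so. The step that fails is your first term, because the imported estimate \cref{eq: wt_inv_point} is false. Set $\phi_{\Delta t}(t)=e^{-t^2/(2\sigma^2)}\bigl(e^{t\Delta t/\sigma^2}-1\bigr)$. In its proof, \cref{eq: inv_bound_1} bounds $\bigl|\int y(t,\bb{x})e^{-\imunit\omega_0t/\sigma}\phi_{\Delta t}(t)\,\d t\bigr|$ by $\max|y|\,\bigl|\int\phi_{\Delta t}\,\d t\bigr|$. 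But $\phi_{\Delta t}<0$ for $t<0$ and $\phi_{\Delta t}>0$ for $t>0$, so the only valid bound is $\max|y|\int|\phi_{\Delta t}|\,\d t=2\Delta t\max|y|+O(\Delta t^2)$. The residual is therefore only $O(\Delta t)$, and after dividing by $\Delta t$ nothing forces your first term to vanish. By dominated convergence it in fact converges to $\sigma^{-2}|\sigma|^{-1}\int t\,y(t,\bb{x})e^{-\imunit\omega_0t/\sigma}e^{-t^2/(2\sigma^2)}\,\d t$, which is generally nonzero.

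This cannot be repaired, because the statement itself is false. Take $g\equiv1$ with any admissible $\T$ and $\Omega$. Then $y\equiv1$ and $\psi_\sigma\equiv\sqrt{2\pi}e^{-\omega_0^2/2}$ is a nonzero constant, so $\K[\psi_\sigma]=0\neq\frac{\imunit\omega_0}{\sigma}\psi_\sigma$ whenever $\omega_0\neq0$. Consistently, \cref{eq: wt_inv_point} would force $|1-\lambda(\Delta t)|=O(\Delta t^2)$, whereas $|1-\lambda(\Delta t)|\sim|\omega_0|\Delta t/|\sigma|$. For a second example, take the oscillator $\dot x_1=x_2$, $\dot x_2=-x_1$ on a circle $\M=\Omega$ with $g(\bb{x})=x_1$. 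A direct computation gives $\psi_\sigma=\tfrac{A}{2}(x_1-\imunit x_2)+\tfrac{B}{2}(x_1+\imunit x_2)$ with $A=\sqrt{2\pi}e^{-(\sigma-\omega_0)^2/2}$ and $B=\sqrt{2\pi}e^{-(\sigma+\omega_0)^2/2}$. The two components are Koopman eigenfunctions with eigenvalues $\imunit$ and $-\imunit$, and $A,B>0$, so $\psi_\sigma$ is not an eigenfunction for any $\sigma\neq0$, $\omega_0\in\R$ (this also covers $\omega_0=0$). What your scheme can prove, once the first term is handled correctly, is weaker. The difference quotients of $t\mapsto\overline{\Gamma((t-\Delta t)/\sigma)}$ converge in $L_1(\R)$ and $|y|\le\sup_{\M}|g|$. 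Hence $\psi_\sigma\in\mathcal{D}(\K)$ with $\K[\psi_\sigma]=\frac{\imunit\omega_0}{\sigma}\psi_\sigma+\sigma^{-2}|\sigma|^{-1}\int t\,y(t,\cdot)\,e^{-\imunit\omega_0t/\sigma}e^{-t^2/(2\sigma^2)}\,\d t$, and the correction term is the obstruction. The error you inherited lies in the paper's proof of \cref{thm: KoopEfn_point}, not in your own bookkeeping.
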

\begin{proof}
    Since \cref{eq: out_DS} satisfy the assumptions of \cref{thm: KoopC0ode} we have that $(\K_{\Delta t})_{t\geq 0}$ is a $C_0$-semigroup over $(C(\Omega),\|\cdot\|_\infty)$. Our goal is to derive the $\|\cdot\|_\infty$ bound from the pointwise bound at \cref{thm: KoopEfn_point}. To do this, take the supremum norm for both sides of \cref{eq: pw_err_bd} on $\Omega\subseteq \M$ to obtain
    \begin{align}
        \Bigl\|\K_{\Delta t}[\psi_\sigma] - \exp\left(\dfrac{\imunit\omega_0 \Delta t}{\sigma}-\dfrac{\Delta t^2}{2\sigma^2}\right)\psi_\sigma\Bigr\|_\infty    &= \sup_{\bb{x}\in \Omega}\Bigl|\K_{\Delta t}[\psi_\sigma](\bb{x}) - \exp\left(\dfrac{\imunit\omega_0 \Delta t}{\sigma}-\dfrac{\Delta t^2}{2\sigma^2}\right)\psi_\sigma(\bb{x})\Bigr|\nonumber \\
                    &\leq \sup_{\bb{x}\in \Omega}|\sigma|\sqrt{2\pi}\left(\exp\left(\dfrac{\Delta t^2}{2\sigma^2}\right) - 1\right)\max_{t \in\R} |y(t,\bb{x})| \nonumber.
    \end{align}
    Since only $\bb{x}$ dependent term on the bound is $y(t,\bb{x})$, we can take constant terms out:
    \begin{equation}\label{eq: sup_err_bd}
        \Bigl\|\K_{\Delta t}[\psi_\sigma] - \exp\left(\dfrac{\imunit\omega_0 \Delta t}{\sigma}-\dfrac{\Delta t^2}{2\sigma^2}\right)\psi_\sigma\Bigr\|_\infty \leq |\sigma|\sqrt{2\pi} \left(\exp\left(\dfrac{\Delta t^2}{2\sigma^2}\right) - 1\right)\sup_{\bb{x}\in \Omega}\max_{t \in\R} |y(t,\bb{x})|.
    \end{equation}
    Since $y(t,\bb{x}) = g(\Phi(t,\bb{x})) = g(\Phi(0,\bb{x}_0)) = g(\bb{x}_0)$ for some $\bb{x}_0\in \R^n$, the $\bb{x}$ dependent term in \cref{eq: sup_err_bd} is bounded by
    \begin{equation*}
        \sup_{\bb{x}\in \Omega}\max_{t \in\R} |y(t,\bb{x})| = \sup_{\bb{x}\in \Omega}\max_{t \in\R} |g\circ\Phi(t,\bb{x})| \leq \sup_{\bb{x}\in \M} |g(\bb{x})| \leq M <\infty,
    \end{equation*}
    for some $M\in \R_+$. Existence of such $M$ is guaranteed because $g$ is bounded on $\M$. Then, with this bound, \cref{eq: sup_err_bd} becomes
    \begin{align}
        \Bigl\|\K_{\Delta t}[\psi_\sigma] - \exp\left(\dfrac{\imunit\omega_0 \Delta t}{\sigma}-\dfrac{\Delta t^2}{2\sigma^2}\right)\psi_\sigma\Bigr\|_\infty &\leq |\sigma|\sqrt{2\pi}\left(\exp\left(\dfrac{\Delta t^2}{2\sigma^2}\right) - 1\right)\sup_{\bb{x}\in \M} |g(\bb{x})| \nonumber \\
        &\leq |\sigma|\sqrt{2\pi} M \left(\exp\left(\dfrac{\Delta t^2}{2\sigma^2}\right) - 1\right).\label{eq: sup_norm_bd}
    \end{align}
    Hence, the bound is quadratic also in $C(\Omega)$ norm with respect to $\Delta t$.
    
    Recall that the generator $\K:\mathcal{D}(\K)\subset C(\Omega)\to C(\Omega)$ of the Koopman semigroup $(\K_{\Delta t})_{\Delta t \geq 0}$ is defined for any $f\in \mathcal{D}(\K)$ by
    \begin{equation*}
        \K[f] = \lim_{\Delta t \downarrow 0} \dfrac{\K_{\Delta t}[f] -f}{\Delta t},
    \end{equation*}
    where the limit is taken in the strong sense. The domain $\mathcal{D}(\K)$ is defined to be
    \begin{equation*}
        \mathcal{D}(\K) = \left\{f\in X\mid \lim_{\Delta t \downarrow 0} \dfrac{\K_{\Delta t}[f] -f}{\Delta t} \text{ exists in } (C(\Omega),\|\cdot\|_\infty)\right\}.
    \end{equation*}
    Equivalently, this means that $f\in \dom(\K)$ if and only if there exists a $h\in C(\Omega)$ such that
    \begin{equation}\label{eq: domdef}
        \sup_{\bb{x}\in \Omega}\left| \dfrac{\K_{\Delta t}[f](\bb{x}) -f(\bb{x})}{\Delta t} - h(\bb{x})\right| \to 0\quad \mbox{as}~\Delta t \to 0.
    \end{equation}
    We claim that $\psi_\sigma$ is an eigenfunction of the Koopman semigroup, specifically $\K[\psi_\sigma] = \dfrac{\imunit \omega_0 }{\sigma}\psi_\sigma$. Indeed, consider the left hand side of  \cref{eq: sup_err_bd} for any $\Delta t > 0$. We can rewrite it as
    \begin{align*}
        \Bigl\|\K_{\Delta t}[\psi_\sigma] - \exp\Bigl(\dfrac{\imunit\omega_0 \Delta t}{\sigma}-&\dfrac{\Delta t^2}{2\sigma^2}\Bigr)\psi_\sigma\Bigr\|_\infty \\
        &=  \Bigl\|\K_{\Delta t}[\psi_\sigma] - \exp\left(\dfrac{\imunit\omega_0 \Delta t}{\sigma}-\dfrac{\Delta t^2}{2\sigma^2}\right)\psi_\sigma + \psi_\sigma - \psi_\sigma\Bigr\|_\infty\\
                &= \Bigl\|\left(\K_{\Delta t}[\psi_\sigma] - \psi_\sigma\right) - \left(\exp\left(\dfrac{\imunit\omega_0 \Delta t}{\sigma}-\dfrac{\Delta t^2}{2\sigma^2}\right)\psi_\sigma - \psi_\sigma\right)\Bigr\|_\infty.
    \end{align*}
    Then by \cref{eq: sup_norm_bd} we have
    \begin{align*}
        \Bigl\|\left(\K_{\Delta t}[\psi_\sigma] - \psi_\sigma\right) - \Bigl(\exp&\left(\dfrac{\imunit\omega_0 \Delta t}{\sigma}-\dfrac{\Delta t^2}{2\sigma^2}\right) - 1\Bigr) \psi_\sigma\Bigr\|_\infty  \\
                &\leq |\sigma|\sqrt{2\pi} M \left(\exp\left(\dfrac{\Delta t^2}{2\sigma^2}\right) - 1\right).
    \end{align*}
    Dividing both sides by $\Delta t > 0$ and taking the limit as $\Delta t\downarrow 0$ we get
    \begin{align}
        \lim_{\Delta t\downarrow 0}\Bigl\|\left(\dfrac{\K_{\Delta t}[\psi_\sigma] - \psi_\sigma}{\Delta t}\right) &- \left(\dfrac{\exp\left(\dfrac{\imunit\omega_0 \Delta t}{\sigma}-\dfrac{\Delta t^2}{2\sigma^2}\right) - 1}{\Delta t}\right) \psi_\sigma\Bigr\|_\infty \nonumber \\
                &\leq |\sigma|\sqrt{2\pi} M \lim_{\Delta t\downarrow 0} \dfrac{1}{\Delta t}\left(\exp\left(\dfrac{\Delta t^2}{2\sigma^2}\right) - 1\right).\label{eq: der_Koop_wt}
    \end{align}
    The limit at the right hand side of \cref{eq: der_Koop_wt} is exactly the derivative of $\exp\left(\dfrac{\Delta t^2}{2\sigma^2}\right)$ at $\Delta t = 0$, which is
    \begin{equation*}
        \lim_{\Delta t\downarrow 0} \dfrac{1}{\Delta t}\left(\exp\left(\dfrac{\Delta t^2}{2\sigma^2}\right) - 1\right) = \left(\dfrac{\partial}{\partial \Delta t}\exp\left(\dfrac{\Delta t^2}{2\sigma^2}\right)\right)_{\Delta t=0} = \dfrac{0}{\sigma^2}\exp\left(\dfrac{0}{2\sigma^2}\right) = 0.
    \end{equation*}
     Combining this with \cref{eq: der_Koop_wt} we get
    \begin{equation*}
        \lim_{\Delta t\downarrow 0}\Bigl\|\left(\dfrac{\K_{\Delta t}[\psi_\sigma] - \psi_\sigma}{\Delta t}\right) - \left(\dfrac{\exp\left(\dfrac{\imunit\omega_0 \Delta t}{\sigma}-\dfrac{\Delta t^2}{2\sigma^2}\right) - 1}{\Delta t}\right) \psi_\sigma\Bigr\|_\infty = 0.
    \end{equation*}
    This shows that $\psi_\sigma$ satisfies \cref{eq: domdef}, thus $\psi_\sigma \in \mathcal{D}(\K)$. Moreover we have
    \begin{align*}
        \K[\psi_\sigma] &= \left(\dfrac{\partial}{\partial\Delta t}\exp\left(\dfrac{\imunit\omega_0 \Delta t}{\sigma}-\dfrac{\Delta t^2}{2\sigma^2}\right)\right)_{\Delta t = 0}\psi_\sigma \\          
            &= \left(\left(\dfrac{\imunit\omega_0}{\sigma}-\dfrac{\Delta t}{\sigma^2}\right)\exp\left(\dfrac{\imunit\omega_0 \Delta t}{\sigma}-\dfrac{\Delta t^2}{2\sigma^2}\right)\right)_{\Delta t = 0}\psi_\sigma \\
            &= \dfrac{\imunit\omega_0}{\sigma}\psi_\sigma.
    \end{align*}
\end{proof}

Recall that \cref{eq: sstKoop} suggests that the Koopman generator acts approximately as a multiplication operator on wavelet-based observables. \cref{thm: KoopEfn} expands on this and proves that, if we fix $\Gamma$ to be a modulated Gaussian, these wavelet-based observables become \emph{eigenfunctions} of the Koopman semigroup when considered on the Banach space of continuous functions. This not only shows that the subspace they span is invariant, but also provides us a method to capture the action of this semigroup. Hence these wavelet-based observables become a useful tool when analyzing the Koopman semigroup.

\subsection{Implications for the Koopman semigroup}\label{ss: Koop_wave}

\cref{thm: KoopEfn} have shown that when $\Gamma$ is picked to be the modulated Gaussian, $\psi_\sigma$ are eigenfunctions of the Koopman semigroup, hence they span an invariant subspace of $C(\Omega)$. In this section, we will analyze the Koopman semigroup and its resolvent using these observables. First, we should note that $\Gamma$ does not satisfy \cref{eq: mothermean}. Hence it does not have an inverse wavelet transform \cref{eq: iCWT_pw0} so \cref{thm: Koop_act_wt} does not hold if we choose $\Gamma$ as a modulated Gaussian. To solve this problem, we will consider the complex Morlet wavelet instead.

The complex Morlet wavelet centered at frequency $\omega_0$ is defined to be
\begin{equation}\label{eq: morletmother}
    \Gamma_M(t) = (e^{\imunit\omega_0 t} - \kappa) e^{-t^2/2},\quad \omega_0>0.
\end{equation}
with $\imunit$ denoting the imaginary unit. Here $\kappa := e^{-\omega_0^2/2}$ ensures that $\Gamma$ satisfies \cref{eq: mothermean}. Note that we have 
\begin{equation}\label{eq: morletsplit}
    \Gamma_M(t) = \Psi(t;\omega_0) - \kappa\Psi(t;0)
\end{equation}
and since our results in \cref{ss: wbobs_Morl} are for generic $\omega_0\in \R$ one can obtain the results for the Morlet wavelet $\Gamma_M$ by combining the results for $\Psi(~\cdot~; \omega_0)$ and $\Psi(~\cdot~;0)$.

First, note that \cref{eq: morletsplit} implies that for any $\bb{x}\in\M$ the associated wavelet-based observables satisfy
\begin{equation}\label{eq: morletwavesplit}
    \psi_{\sigma,M}(\bb{x}) = \psi_{\sigma,\omega_0}(\bb{x}) -\kappa \psi_{\sigma,0}(\bb{x}).
\end{equation}
Here $\psi_{\sigma,M}$ denotes the wavelet-based observable generated via Morlet wavelet \cref{eq: morletmother} and for any $\omega_0\in \R$, $\psi_{\sigma,\omega_0}$ denotes the wavelet-based observable with modulated Gaussian $\Psi(~\cdot~;\omega_0)$. Then \cref{thm: KoopEfn} implies the following corollary.
\begin{corollary}\label{cor: KoopActMorl}
    Consider the dynamical system \cref{eq: out_DS} and assume that $\T$ is globally Lipschitz and $g$ is continuous. Let $\Omega$ be a compact forward-invariant subset of $\M$. Then for any $\sigma\neq 0$ and $\Delta t > 0$ the wavelet-based observables $\psi_{\sigma,M}$ satisfy
    \begin{equation*}
        \K_{\Delta t}[\psi_{\sigma,M}] = \exp\left(\dfrac{\imunit \omega_0 \Delta t}{\sigma}\right)\psi_{\sigma,\omega_0} - \kappa \psi_{\sigma,0}.
    \end{equation*}
\end{corollary}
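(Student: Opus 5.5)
The plan is to reduce the corollary to \cref{thm: KoopEfn} using the decomposition \cref{eq: morletwavesplit} and linearity of $\K_{\Delta t}$. The one new ingredient is the standard fact that an eigenvector of the generator of a $C_0$-semigroup is also an eigenvector of every semigroup element, with eigenvalue $e^{\lambda \Delta t}$.

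First I would fix $\sigma\neq 0$ and apply \cref{thm: KoopEfn} twice, once with modulation frequency $\omega_0$ and once with frequency $0$. This requires $\psi_{\sigma,\omega_0}$ and $\psi_{\sigma,0}$ to lie in $C(\Omega)$, which holds by \cref{lem: wave_obs_cont} and the remark after it. The theorem then gives $\psi_{\sigma,\omega_0},\psi_{\sigma,0}\in\dom(\K)$, with
\begin{equation*}
\K[\psi_{\sigma,\omega_0}]=\tfrac{\imunit\omega_0}{\sigma}\psi_{\sigma,\omega_0}, \qquad \K[\psi_{\sigma,0}]=0.
\end{equation*}
\Cref{thm: KoopC0ode} guarantees that $(\K_{\Delta t})_{\Delta t\ge 0}$ is a $C_0$-semigroup on $(C(\Omega),\|\cdot\|_\infty)$, so standard semigroup calculus is available.

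The key step, and the only non-bookkeeping one, is to pass from the generator eigen-relation to the semigroup. Let $\psi\in\dom(\K)$ with $\K[\psi]=\lambda\psi$, and set $u(s):=\K_s[\psi]$. For a $C_0$-semigroup, $u$ is differentiable in $C(\Omega)$ with
\begin{equation*}
u'(s)=\K_s\K[\psi]=\lambda u(s)
\end{equation*}
(see, e.g., \cite{EngelNagel}, Lemma II.1.3). Then $v(s):=e^{-\lambda s}u(s)$ satisfies $v'(s)=-\lambda v(s)+e^{-\lambda s}\lambda u(s)=0$. Hence $v$ is constant and equal to $v(0)=\psi$, so $\K_{\Delta t}[\psi]=e^{\lambda\Delta t}\psi$. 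Applying this with $\lambda=\imunit\omega_0/\sigma$ gives $\K_{\Delta t}[\psi_{\sigma,\omega_0}]=\exp(\imunit\omega_0\Delta t/\sigma)\psi_{\sigma,\omega_0}$. Applying it with $\lambda=0$ gives $\K_{\Delta t}[\psi_{\sigma,0}]=\psi_{\sigma,0}$.

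Finally, $\K_{\Delta t}$ is linear (it is composition with $\Phi(\Delta t,\cdot)$). Applying it to $\psi_{\sigma,M}=\psi_{\sigma,\omega_0}-\kappa\psi_{\sigma,0}$ yields the claimed identity. I expect the main obstacle to be making sure the differentiability step is justified in the sup-norm topology rather than only pointwise; invoking the abstract $C_0$-semigroup lemma resolves this.
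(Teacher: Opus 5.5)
\textbf{Comparison with the paper.} You follow the paper's route exactly. You split $\psi_{\sigma,M}=\psi_{\sigma,\omega_0}-\kappa\psi_{\sigma,0}$, invoke \cref{thm: KoopEfn} at frequencies $\omega_0$ and $0$, and use linearity of $\K_{\Delta t}$. Your passage from the generator to the semigroup ($u'(s)=\K_s\K[\psi]=\lambda u(s)$, hence $e^{-\lambda s}u(s)$ is constant) is correct, and it is precisely the step the paper leaves implicit.

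\textbf{The genuine gap.} The gap is the input: \cref{thm: KoopEfn} is false, and so is the corollary. No amount of careful semigroup bookkeeping can fix this. Here is a counterexample satisfying every hypothesis. Take $g\equiv 1$, any globally Lipschitz $\T$, and any compact forward-invariant $\Omega$. Then $y\equiv 1$, and the substitution $t=\sigma u$ gives $\psi_{\sigma,\omega_0}\equiv\int_{\R}e^{-\imunit\omega_0 u}e^{-u^2/2}\,\d u=\sqrt{2\pi}\,\kappa$ and $\psi_{\sigma,0}\equiv\sqrt{2\pi}$. Hence $\psi_{\sigma,M}\equiv 0$, because the Morlet wavelet has zero mean. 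The left-hand side of the corollary is therefore $0$, while the right-hand side equals $\sqrt{2\pi}\,\kappa\,\bigl(e^{\imunit\omega_0\Delta t/\sigma}-1\bigr)\neq 0$ whenever $\omega_0\Delta t/\sigma\notin 2\pi\mathbb{Z}$. In fact $\psi_{\sigma,\omega_0}$ is already a nonzero constant, so $\K[\psi_{\sigma,\omega_0}]=0\neq\frac{\imunit\omega_0}{\sigma}\psi_{\sigma,\omega_0}$, which contradicts \cref{thm: KoopEfn} directly.

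The failure is not an artifact of constants. By \cref{lem: Koop_wt_obs}, which is correct, $\K_{\Delta t}[\psi_{\sigma,\omega_0}](\bb{x})=\mathcal{W}[y(\cdot,\bb{x})](\sigma,\Delta t)$. The claim would therefore force the scale-$\sigma$ coefficients of every output to rotate at the signal-independent rate $\omega_0/\sigma$. For a harmonic oscillator with output $y(t)=\cos(\nu t)$ and $\sigma>0$, one instead gets $\mathcal{W}[y](\sigma,\tau)=\frac{\sqrt{2\pi}}{2}\bigl(e^{\imunit\nu\tau}e^{-(\nu\sigma-\omega_0)^2/2}+e^{-\imunit\nu\tau}e^{-(\nu\sigma+\omega_0)^2/2}\bigr)$, which rotates at rates $\pm\nu$.

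\textbf{Where the paper's argument breaks.} The error is in the proof of \cref{thm: KoopEfn_point}. The estimate \cref{eq: inv_bound_1} pulls $\max|y|$ out of the integral but leaves the absolute value outside the kernel $e^{-t^2/(2\sigma^2)}\bigl(e^{t\Delta t/\sigma^2}-1\bigr)$, which changes sign at $t=0$. The valid bound is $\max|y|\int_{\R}e^{-t^2/(2\sigma^2)}\bigl|e^{t\Delta t/\sigma^2}-1\bigr|\,\d t\approx 2\Delta t\,\max|y|$. This is only first order in $\Delta t$, so after dividing by $\Delta t$ the bound in \cref{eq: der_Koop_wt} no longer tends to zero. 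Differentiating under the integral sign instead gives
\[
\K[\psi_{\sigma,\omega_0}](\bb{x})=\frac{\imunit\omega_0}{\sigma}\psi_{\sigma,\omega_0}(\bb{x})+\frac{1}{\sigma^2|\sigma|}\int_{\R}t\,y(t,\bb{x})\,e^{-\imunit\omega_0 t/\sigma}e^{-t^2/(2\sigma^2)}\,\d t .
\]
The second term comes from the moving Gaussian envelope and does not vanish in general. What remains true is only $\K_{\Delta t}[\psi_{\sigma,M}](\bb{x})=\mathcal{W}[y(\cdot,\bb{x})](\sigma,\Delta t)$ with $\Gamma=\Gamma_M$. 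There is no closed form in terms of the time-zero observables $\psi_{\sigma,\omega_0}$ and $\psi_{\sigma,0}$.
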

\begin{proof}
To see this, we apply $\K_{\Delta t}$ to $\psi_{\sigma,M}$ and use linearity of $\K_{\Delta t}$ to get
\begin{equation*}
    \K_{\Delta t}[\psi_{\sigma,M}] = \K_{\Delta t}[\psi_{\sigma,\omega_0}]-\kappa \K_{\Delta t}[\psi_{\sigma,0}].
\end{equation*}
Then by applying \cref{thm: KoopEfn} to $\psi_{\sigma,\omega_0}$ and $\psi_{\sigma,0}$ we obtain
\begin{equation*}
\begin{aligned}
    \K_{\Delta t}[\psi_{\sigma,M}] &= \exp\left(\dfrac{\imunit \omega_0\Delta t}{\sigma}\right)\psi_{\sigma,\omega_0}-\kappa\exp\left( \dfrac{\imunit 0\Delta t}{\sigma}\right)\psi_{\sigma,0} \\
    &= \exp\left(\dfrac{\imunit \omega_0\Delta t}{\sigma}\right)\psi_{\sigma,\omega_0}-\kappa\psi_{\sigma,0}.
\end{aligned}
\end{equation*}
\end{proof}

We will start with the action of a semigroup element $\K_{\Delta t}$ on $g$. Recall that for a generic wavelet-based observable we had \cref{thm: Koop_act_wt}. When we pick $\Gamma$ to be the complex Morlet wavelet \cref{eq: morletmother} this simplifies as we show next.

\begin{theorem}[Action of the Koopman semigroup for modulated Gaussian]\label{lem: Kdt_act_Morl}
    Assume the conditions at \cref{thm: Koop_act_wt} and fix an $\omega_0\in \R$. Then the action of $\K_{\Delta t}$ on $g$ simplifies to
    \begin{equation}\label{eq: Kdt_WT_Morl}
        \mathcal{K}_{\Delta t}[g](\bb{x})
        =\dfrac{\sqrt{2\pi}}{C_{\Gamma_M}}\int_{\sigma\in \R} \psi_{\sigma,\omega_0}(\bb{x}) \Bigl(\exp(\imunit \omega_0 \Delta t/\sigma) -\kappa\Bigr)~\frac{\d \sigma}{|\sigma|},~\text{for any } \Delta t \geq 0
    \end{equation}
    with $\kappa = e^{-\omega_0^2/2}$. Here $C_{\Gamma_M}$ is the admissibility constant for the \emph{complex Morlet wavelet}.
\end{theorem}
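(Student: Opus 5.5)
The plan is to start from the representation \cref{eq: Kdt_WT} of \cref{thm: Koop_act_wt} with $\Gamma=\Gamma_M$. Then I replace the time-shifted wavelet coefficient $\mathcal{W}[y(\cdot,\bb{x})](\sigma,\tau+\Delta t)$ by an explicit expression in the observables $\psi_{\sigma,\omega_0}$ and $\psi_{\sigma,0}$, and finally carry out the inner $\tau$-integral in closed form. This inner integral is a Gaussian integral, so the double integral collapses to the single $\sigma$-integral in \cref{eq: Kdt_WT_Morl}.

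\textbf{Step 1 (coefficients as a semigroup orbit).} By \cref{lem: Koop_wt_obs}, $\mathcal{W}[y(\cdot,\bb{x})](\sigma,s)=\K_{s}[\psi_{\sigma,M}](\bb{x})$ for $s>0$. The same change of variables in that proof works verbatim for any real shift $s$, because the flow $\Phi$ exists for all $t\in\R$ by Picard--Lindel\"of. For negative $s$ I will therefore read $\K_s$ as composition with $\Phi(s,\cdot)$.

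\textbf{Step 2 (eigen-relation for all real shifts).} Next I want \cref{cor: KoopActMorl} with $\Delta t$ replaced by $s=\tau+\Delta t$ for every $\tau\in\R$. Rather than passing through the generator, I will redo the completing-the-square computation of \cref{thm: KoopEfn_point} directly. It gives $\mathcal{W}[y](\sigma,s)$ for the modulated Gaussian as $e^{\imunit\omega_0 s/\sigma}$ times a Gaussian-weighted transform. Combining this with the splitting \cref{eq: morletwavesplit} yields
\begin{equation*}
\mathcal{W}[y(\cdot,\bb{x})](\sigma,\tau+\Delta t)=e^{\imunit\omega_0(\tau+\Delta t)/\sigma}\psi_{\sigma,\omega_0}(\bb{x})-\kappa\,\psi_{\sigma,0}(\bb{x}).
\end{equation*}
If the direct computation does not reproduce this exactly, I would instead argue that $s\mapsto \K_s\psi$ solves the scalar ODE $\frac{\d}{\d s}u=\frac{\imunit\omega_0}{\sigma}u$ for $s\ge 0$ (via \cref{thm: KoopEfn}), and extend to $s<0$ by the group property of the flow.

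\textbf{Step 3 (the $\tau$-integral).} I substitute $\Gamma_{\sigma,\tau}(0)=|\sigma|^{-1}(e^{-\imunit\omega_0\tau/\sigma}-\kappa)e^{-\tau^2/2\sigma^2}$ into \cref{eq: Kdt_WT} and integrate in $\tau$. Each resulting term is of the form $|\sigma|^{-1}\int e^{\imunit c\tau/\sigma}e^{-\tau^2/2\sigma^2}\,\d\tau=\sqrt{2\pi}\,e^{-c^2/2}$.
\begin{itemize}
\item The $\psi_{\sigma,0}$ term is multiplied by $\int\Gamma_{\sigma,\tau}(0)\,\d\tau$, which vanishes by the zero-mean property \cref{eq: mothermean} of $\Gamma_M$.
\item The $\psi_{\sigma,\omega_0}$ term produces $\sqrt{2\pi}$ times a combination of $e^{\imunit\omega_0\Delta t/\sigma}$ and powers of $\kappa=e^{-\omega_0^2/2}$.
\end{itemize}
Collecting these factors outside the $\sigma$-integral, together with $1/C_{\Gamma_M}$, should give the form \cref{eq: Kdt_WT_Morl}.

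I expect to have to track the $\kappa$-dependent constants carefully here, since my first pass gives a factor $(1-\kappa^2)e^{\imunit\omega_0\Delta t/\sigma}$. The exact grouping must be reconciled with the stated $\bigl(\exp(\imunit\omega_0\Delta t/\sigma)-\kappa\bigr)$, which may require re-expressing part of the expression through $\psi_{\sigma,M}$ via \cref{eq: morletwavesplit}.

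\textbf{Main obstacle.} The difficult step is the analytic justification of the interchange and evaluation of integrals. The pointwise inverse transform \cref{eq: iCWT_pw0} is only defined as a limit near $\sigma=0$. The inner $\tau$-integral is absolutely convergent by the Gaussian decay together with the bound \cref{eq: bd_wc}, so Fubini applies for each fixed $\sigma\neq0$. I would therefore keep the outer $\sigma$-integral in the same limiting sense as in \cref{thm: Koop_act_wt} and only evaluate the inner integral exactly. This avoids needing absolute convergence of the remaining $\sigma$-integral, which is presumably only conditionally convergent near $0$ and $\infty$.
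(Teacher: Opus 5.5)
Your Step 2 is where the argument fails, and it cannot be patched: the identity $\mathcal{W}[y(\cdot,\bb{x})](\sigma,s)=e^{\imunit\omega_0 s/\sigma}\psi_{\sigma,\omega_0}(\bb{x})-\kappa\,\psi_{\sigma,0}(\bb{x})$ is false. Write $\mathcal{W}_{\omega_0}$ for the transform with $\Gamma=\Psi(\cdot\,;\omega_0)$. Completing the square gives
\[
\mathcal{W}_{\omega_0}[y(\cdot,\bb{x})](\sigma,s)=e^{\imunit\omega_0 s/\sigma}\,|\sigma|^{-1}\int_{\R}y(t,\bb{x})\,e^{-\imunit\omega_0 t/\sigma}e^{-(t-s)^2/2\sigma^2}\,\d t .
\]
The factor after $e^{\imunit\omega_0 s/\sigma}$ still depends on $s$ through the moving Gaussian window, and it reduces to $\psi_{\sigma,\omega_0}(\bb{x})$ only at $s=0$.

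Your fallback through \cref{thm: KoopEfn} inherits an error in \cref{thm: KoopEfn_point}. There $\max|y|$ is pulled out of $\bigl|\int y\,e^{-\imunit\omega_0 t/\sigma}h\,\d t\bigr|$ while the modulus stays outside $\int h$, even though $h(t)=e^{-t^2/2\sigma^2}(e^{t\Delta t/\sigma^2}-1)$ changes sign at $t=0$. The valid bound involves $\int|h|\approx 2\Delta t$, which is only first order. Differentiating the display at $s=0$ shows that the generator picks up the extra term $|\sigma|^{-1}\sigma^{-2}\int_{\R}t\,y(t,\cdot)\,e^{-\imunit\omega_0 t/\sigma}e^{-t^2/2\sigma^2}\,\d t$, which is generally nonzero.

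A concrete check: take $\dot{\bb{x}}=\nu\bbm 0&1\\-1&0\ebm\bb{x}$ with the bounded Lipschitz output $g(\bb{x})=x_1/(1+\|\bb{x}\|^2)$. Every hypothesis holds, and $y$ is a combination of $e^{\pm\imunit\nu t}$. Since $\mathcal{W}_{\omega_0}[e^{\imunit\nu\,\cdot}](\sigma,s)=\sqrt{2\pi}\,e^{\imunit\nu s}e^{-(\nu\sigma-\omega_0)^2/2}$, the observables rotate at the signal frequencies $\pm\nu$, not at $\omega_0/\sigma$. The paper's own proof runs through the same eigen-relation via \cref{cor: KoopActMorl}, so it shares this defect. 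The interchange-of-integrals issues you single out as the main obstacle are secondary.

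The $\kappa$ mismatch you noticed is a symptom of this, not a bookkeeping issue. Granting Step 2, your factor $(1-\kappa^2)e^{\imunit\omega_0\Delta t/\sigma}$ is the correct outcome. The $-\kappa\Psi(\cdot\,;0)$ part of $\Gamma_{\sigma,\tau}(0)$ still meets the phase $e^{\imunit\omega_0(\Delta t+\tau)/\sigma}$ and contributes $\sqrt{2\pi}\kappa\,e^{\imunit\omega_0\Delta t/\sigma}$, not $\sqrt{2\pi}$. The paper gets $-\kappa$ only by also setting $\omega_0=0$ in that phase, and no regrouping through \cref{eq: morletwavesplit} can bridge the two.

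More decisively, the right-hand side of \cref{eq: Kdt_WT_Morl} is not finite. Since $\int_{\R}\Psi(t;\omega_0)\,\d t=\sqrt{2\pi}\kappa\neq0$, dominated convergence gives $\psi_{\sigma,\omega_0}(\bb{x})\to\sqrt{2\pi}\kappa\,g(\bb{x})$ as $\sigma\to0$. The $-\kappa$ part of the integrand therefore behaves like $-\sqrt{2\pi}\kappa^2 g(\bb{x})/|\sigma|$, with the same sign on both sides of $\sigma=0$. So the integral diverges logarithmically for every $\Delta t\ge0$ whenever $g(\bb{x})\neq0$. No limiting interpretation of the outer integral absorbs this, and your $(1-\kappa^2)$ variant fails the same test at $\Delta t=0$.

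For comparison, with $\Gamma=\Gamma_M$ the exact inner integral for a pure tone is $\int_{\R}\mathcal{W}[e^{\imunit\nu\,\cdot}](\sigma,\tau+\Delta t)\,\Gamma_{\sigma,\tau}(0)\,\d\tau=2\pi e^{\imunit\nu\Delta t}|\hat{\Gamma}_M(\sigma\nu)|^2$. The shift enters through the signal frequency, so its ratio to $\psi_{\sigma,\omega_0}$ depends on $\nu$, not only on $\sigma$ and $\Delta t$. The statement is therefore false in general, with the oscillator above as a counterexample, and no route can prove it as written.
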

\begin{proof}
    Let $\Gamma = \Gamma_M$ be the complex Morlet wavelet. Then for a fixed $\Delta t > 0$, \cref{thm: Koop_act_wt} suggests that we have
    \begin{equation*}
        \mathcal{K}_{\Delta t}[g](\bb{x})
        =\frac{1}{C_{\Gamma_M}}\int_{\sigma\in \R} \int_{\tau\in \R} \mathcal{W}[y(\cdot,\bb{x})](\sigma,\tau+\Delta t) \Gamma_{\sigma,\tau}(0) \d \tau \dfrac{\d \sigma}{|\sigma|}.
    \end{equation*}
    Using \cref{lem: Koop_wt_obs} we can rewrite this as
    \begin{equation*}
        \mathcal{K}_{\Delta t}[g](\bb{x}_0)
        =\frac{1}{C_{\Gamma_M}}\int_{\sigma\in \R} \int_{\tau\in \R} \K_{\Delta t+\tau}[\psi_{\sigma,M}](\bb{x}) \Gamma_{\sigma,\tau}(0) \d \tau \dfrac{\d \sigma}{|\sigma|}.
    \end{equation*}
    Since $\Gamma$ is a complex Morlet wavelet, we apply \cref{cor: KoopActMorl} to get
    \begin{equation*}
        \mathcal{K}_{\Delta t}[g](\bb{x}_0)
        =\frac{1}{C_{\Gamma_M}}\int_{\sigma\in \R} \int_{\tau\in \R} \Bigl(\exp\left(\dfrac{\imunit \omega_0 (\Delta t+\tau)}{\sigma}\right)\psi_{\sigma,\omega_0} - \kappa \psi_{\sigma,0}\Bigr)\Gamma_{\sigma,\tau}(0)\d \tau \dfrac{\d \sigma}{|\sigma|}.
    \end{equation*}
    We split this integral and take $\psi_\sigma(\bb{x})$ out of the inner integral and get
    \begin{align}
        \mathcal{K}_{\Delta t}[g](\bb{x})
        &=\frac{1}{C_{\Gamma_M}}\int_{\sigma\in \R} \psi_{\sigma,\omega_0}(\bb{x})\left(\int_{\tau\in \R} \exp(\imunit \omega_0(\Delta t + \tau)/\sigma) \Gamma_{\sigma,\tau}(0) \d \tau\right) \label{eq: cmKdtint11}\\
        &\qquad\qquad-\kappa~\psi_{\sigma,0}(\bb{x})\left(\int_{\tau\in \R}  \Gamma_{\sigma,\tau}(0) \d \tau\right) \dfrac{\d \sigma}{|\sigma|}.\label{eq: cmKdtint12}
    \end{align}
    Note that since $\Gamma$ satisfies \cref{eq: mothermean} we have
    \begin{equation*}
        \int_{\tau\in \R}  \Gamma_{\sigma,\tau}(0) \d \tau = \int_{\tau\in \R}  \Gamma(-\tau/\sigma) \d \tau = 0.
    \end{equation*}
    This suggests that \cref{eq: cmKdtint12} is zero for any $\sigma$. Hence we get
    \begin{equation}\label{eq: cmKdtint2}
        \mathcal{K}_{\Delta t}[g](\bb{x})
        =\frac{1}{C_{\Gamma_M}}\int_{\sigma\in \R} \psi_{\sigma,\omega_0}(\bb{x})\left(\int_{\tau\in \R} \exp(\imunit \omega_0(\Delta t + \tau)/\sigma) \Gamma_{\sigma,\tau}(0) \d \tau\right) \dfrac{\d \sigma}{|\sigma|}.
    \end{equation}
    By \cref{eq: morletsplit}, we can split the inner integral at \cref{eq: cmKdtint2} as
    \begin{align}
        \mathcal{K}_{\Delta t}[g](\bb{x})
        &=\frac{1}{C_{\Gamma_M}}\int_{\sigma\in \R} \psi_{\sigma,\omega_0}(\bb{x})\left(\int_{\tau\in \R} \exp(\imunit \omega_0(\Delta t + \tau)/\sigma) \Psi_{\sigma,\tau}(0;\omega_0) \d \tau\right) \label{eq: cmKdtint31}\\
        &\qquad\qquad -\kappa \left(\int_{\tau\in \R} \exp(\imunit \omega_0(\Delta t + \tau)/\sigma) \Psi_{\sigma,\tau}(0;0) \d \tau\right) \dfrac{\d \sigma}{|\sigma|},\label{eq: cmKdtint32}
    \end{align}
    where $\Psi(~\cdot~;\omega_0)$ is defined as in \cref{eq: modGauss}. Consider the inner integral at \cref{eq: cmKdtint31}. Using \cref{eq: modGauss} we can calculate this as
    \begin{equation*}
    \begin{aligned}
        \int_{\tau\in \R} \exp&(\imunit \omega_0(\Delta t + \tau)/\sigma) \Psi_{\sigma,\tau}(0;\omega_0) \d \tau = \int_{\tau\in \R} \exp(\imunit \omega_0(\Delta t + \tau)/\sigma) \Psi_{\sigma,0}(-\tau;\omega_0) \d \tau \\
        &= |\sigma|^{-1}\int_{\tau\in \R} \exp(\imunit \omega_0(\Delta t + \tau)/\sigma) \exp(\imunit \omega_0 (-\tau)/\sigma)\exp(-(-\tau)^2/2\sigma^2) \d \tau.
    \end{aligned}
    \end{equation*}
    This can be simplified to
    \begin{equation*}
    \begin{aligned}
        \int_{\tau\in \R} \exp(\imunit \omega_0(\Delta t + \tau)/\sigma) \Gamma_{\sigma,\tau}(0) \d \tau &= |\sigma|^{-1}\int_{\tau\in \R} \exp(\imunit \omega_0(\Delta t + \tau-\tau)/\sigma)\exp(-\tau^2/2\sigma^2) \d \tau \\
        &= |\sigma|^{-1}\int_{\tau\in \R} \exp(\imunit \omega_0\Delta t/\sigma)\exp(-\tau^2/2\sigma^2) \d \tau \\
        &= |\sigma|^{-1}\exp(\imunit \omega_0\Delta t/\sigma)\int_{\tau\in \R} \exp(-\tau^2/2\sigma^2) \d \tau.
    \end{aligned}
    \end{equation*}
    The solution of the last integral is
    \begin{equation*}
        \int_{\tau\in \R} \exp(-\tau^2/2\sigma^2) \d \tau = |\sigma|\sqrt{2\pi}.
    \end{equation*}
    Similarly, we can calculate the inner integral at \cref{eq: cmKdtint32} using the same arguments with $\omega_0 = 0$. Then \cref{eq: cmKdtint2} becomes
    \begin{equation*}
        \mathcal{K}_{\Delta t}[g](\bb{x}) =  \dfrac{\sqrt{2\pi}}{C_{\Gamma_M}}\int_{\sigma\in \R} \psi_{\sigma,\omega_0}(\bb{x})\Bigl(\exp(\imunit \omega_0 \Delta t/\sigma)-\kappa\Bigr) \dfrac{\d \sigma}{|\sigma|}.
    \end{equation*}
    This then proves \cref{eq: Kdt_WT_Morl}.
\end{proof}
\cref{lem: Kdt_act_Morl} suggests that with the wavelet-based observables evaluated at $\bb{x}$, we can fully recover the Koopman semigroup. Hence any EDMD approximation with the wavelet-based observables will essentially act as a quadrature approximation to the integral at \cref{eq: Kdt_WT_Morl}.

We can also use \cref{lem: Kdt_act_Morl} to get a closed form expression of the resolvent in terms of the wavelet-based observables $\psi_\sigma$. 
\begin{lemma}[Characterizing the resolvent via wavelet-based observables $\psi_\sigma$]\label{thm: Koop_res_Morl}
    Assume the conditions of \cref{thm: Koop_act_wt}. Then for a given $s\in\C$ with $\mathrm{Re}(s)>0$, we can characterize the Koopman resolvent on $s$ whenever it exists as
    \begin{equation}\label{eq: Kres_WT_Morl}
        (s~\id - \mathcal{K})^{-1}[g](\bb{x})
        =\dfrac{\sqrt{2\pi}}{C_{\Gamma_M}}\int_{\sigma\in \R} \psi_{\sigma,\omega_0}(\bb{x}) \left(\dfrac{1}{s - \dfrac{\imunit \omega_0}{\sigma} } -\dfrac{\kappa}{s}\right)~\dfrac{\d \sigma}{|\sigma|}.
    \end{equation}
    Here $\id$ denotes the identity operator on $C(\Omega)$ and $C_{\Gamma_M}$ is the admissibility constant for the complex Morlet wavelet.
\end{lemma}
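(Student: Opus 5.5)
We will obtain \cref{eq: Kres_WT_Morl} from the Laplace-transform representation of the resolvent of a $C_0$-semigroup, combined with the closed-form action in \cref{lem: Kdt_act_Morl}. By \cref{thm: KoopC0ode}, $(\K_{\Delta t})_{\Delta t\geq 0}$ is a $C_0$-semigroup on $(C(\Omega),\|\cdot\|_\infty)$. It is in fact a contraction semigroup, since $\|\K_{\Delta t}[f]\|_\infty=\sup_{\bb{x}\in\Omega}|f(\Phi(\Delta t,\bb{x}))|\leq\|f\|_\infty$ by forward invariance of $\Omega$. Hence its growth bound is at most $0$. The standard result \cite{EngelNagel} then gives, for $\mathrm{Re}(s)>0$,
\begin{equation*}
    (s~\id-\K)^{-1}[g]=\int_{0}^{\infty}e^{-s\Delta t}\,\K_{\Delta t}[g]\,\d\Delta t ,
\end{equation*}
as a Bochner (or improper Riemann) integral in $C(\Omega)$, and hence pointwise at each $\bb{x}\in\Omega$ because point evaluation is a bounded functional.

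Next we substitute \cref{eq: Kdt_WT_Morl} for $\K_{\Delta t}[g](\bb{x})$. This gives the iterated integral
\begin{equation*}
    \dfrac{\sqrt{2\pi}}{C_{\Gamma_M}}\int_0^\infty e^{-s\Delta t}\int_{\sigma\in\R}\psi_{\sigma,\omega_0}(\bb{x})\Bigl(e^{\imunit\omega_0\Delta t/\sigma}-\kappa\Bigr)\dfrac{\d\sigma}{|\sigma|}\,\d\Delta t .
\end{equation*}
We then swap the order of integration and compute the inner $\Delta t$-integrals explicitly. The first is $\int_0^\infty e^{-(s-\imunit\omega_0/\sigma)\Delta t}\d\Delta t = (s-\imunit\omega_0/\sigma)^{-1}$, which converges because $\mathrm{Re}(s-\imunit\omega_0/\sigma)=\mathrm{Re}(s)>0$ for real $\sigma$. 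The second is $\kappa\int_0^\infty e^{-s\Delta t}\d\Delta t=\kappa/s$. Together these yield exactly the integrand in \cref{eq: Kres_WT_Morl}.

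The main obstacle is justifying the interchange of the $\Delta t$- and $\sigma$-integrals. The $\sigma$-integral in \cref{eq: Kdt_WT_Morl} is only conditionally convergent: it is understood as a limit near $\sigma=0$, inherited from the inverse wavelet transform, and $\psi_{\sigma,\omega_0}$ is bounded only by $\sqrt{2\pi}\,\|g\|_\infty$ via \cref{eq: bd_wc}, so $\d\sigma/|\sigma|$ is not integrable at $0$ or $\infty$. The plan is to truncate to $\epsilon\leq|\sigma|\leq R$. On this set the integrand is bounded by $2\sqrt{2\pi}\,\|g\|_\infty e^{-\mathrm{Re}(s)\Delta t}/|\sigma|$, which is integrable in $(\Delta t,\sigma)$, so Fubini applies. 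We then pass $\epsilon\to0$, $R\to\infty$ on both sides, interpreting the right-hand side of \cref{eq: Kres_WT_Morl} in the same limiting sense as \cref{eq: Kdt_WT_Morl}.

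For the left-hand side we need the truncated versions of \cref{eq: Kdt_WT_Morl} to converge to $\K_{\Delta t}[g](\bb{x})$ dominatedly in $\Delta t$, for instance with a bound uniform in $\Delta t$ that is then weighted by $e^{-\mathrm{Re}(s)\Delta t}$. We would aim to get this from uniform boundedness of the truncated inverse wavelet transform of the bounded signal $y(\cdot+\Delta t,\bb{x})$ \cite{ICWT_pw}, and then conclude by dominated convergence. If that uniformity is unavailable, we would instead state the result with the $\sigma$-integral taken in the corresponding principal-value sense. This matches the phrase ``whenever it exists'' in the statement.
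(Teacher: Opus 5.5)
Your route is the paper's route: the Laplace representation of the resolvent, substitution of \cref{eq: Kdt_WT_Morl}, an interchange of integrals, and the elementary transform $\int_0^\infty(e^{\imunit\omega_0 t/\sigma}-\kappa)e^{-st}\,\d t=\frac{1}{s-\imunit\omega_0/\sigma}-\frac{\kappa}{s}$. The paper simply cites Fubini, and you are right that this is unjustified. Your contraction argument for $\Re(s)>0$ is also fine.

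The genuine gap is the step where you let $\epsilon\to0$. The right-hand side of \cref{eq: Kres_WT_Morl} is not conditionally convergent at $\sigma=0$; it diverges, and no reading of the integral (Lebesgue, truncated limit, principal value) rescues it. The cause is that $\psi_{\sigma,\omega_0}$ is built from the non-zero-mean modulated Gaussian. Substituting $t=|\sigma|u$ in \cref{eq: wave_obs} and using continuity and boundedness of $y(\cdot,\bb{x})$ gives $\psi_{\sigma,\omega_0}(\bb{x})\to\sqrt{2\pi}\,\kappa\,g(\bb{x})$ as $\sigma\to0^{\pm}$.

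In the bracket, the term $\frac{1}{s-\imunit\omega_0/\sigma}=\frac{\sigma}{s\sigma-\imunit\omega_0}$ is $O(|\sigma|)$ and harmless. The other term behaves like $-\frac{\kappa}{s}\psi_{\sigma,\omega_0}(\bb{x})/|\sigma|\sim-\sqrt{2\pi}\kappa^2 g(\bb{x})/(s|\sigma|)$. This has the same sign for $\sigma>0$ and $\sigma<0$, so $\int_{\epsilon\le|\sigma|\le 1}$ grows like $\log(1/\epsilon)$ whenever $g(\bb{x})\neq0$. The left-hand side is finite (bounded by $\|g\|_\infty/\Re(s)$), so the identity cannot hold at such points, and your principal-value fallback does not help. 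The same $-\kappa\psi_{\sigma,\omega_0}(\bb{x})/|\sigma|$ singularity already sits inside \cref{eq: Kdt_WT_Morl}; at $\Delta t=0$ its integrand is $(1-\kappa)\psi_{\sigma,\omega_0}(\bb{x})/|\sigma|$. So the truncations you hope to dominate are not a valid input to begin with.

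The uniform bound you want for truncated inverse wavelet transforms of bounded signals does exist. However, it controls the zero-mean Morlet quantity $\frac{1}{C_{\Gamma_M}}\int_{\tau\in\R}\mathcal{W}[y(\cdot,\bb{x})](\sigma,\tau+\Delta t)\Gamma_{\sigma,\tau}(0)\,\d\tau$. Identifying that, scale by scale, with $\frac{\sqrt{2\pi}}{C_{\Gamma_M}}\psi_{\sigma,\omega_0}(\bb{x})(e^{\imunit\omega_0\Delta t/\sigma}-\kappa)$ is what the proof of \cref{lem: Kdt_act_Morl} does through \cref{cor: KoopActMorl}, hence through \cref{thm: KoopEfn}. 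That eigenfunction claim is false. For $g\equiv c\neq0$ one gets $\psi_{\sigma,\omega_0}\equiv\sqrt{2\pi}\kappa c$, a nonzero constant, so $\K[\psi_{\sigma,\omega_0}]=0\neq\frac{\imunit\omega_0}{\sigma}\psi_{\sigma,\omega_0}$ for $\omega_0\neq0$.

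The slip is in \cref{thm: KoopEfn_point}. There $\max|y|$ is pulled out while the modulus stays outside the integral of the sign-changing factor $e^{t\Delta t/\sigma^2}-1$; the correct estimate is only $O(\Delta t)$, not $O(\Delta t^2)$. So you were right that the interchange is the crux. What it exposes is that neither your truncation repair nor the paper's direct appeal to Fubini can establish \cref{eq: Kres_WT_Morl} as stated.
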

\begin{proof}
    Since the Koopman semigroup is a $C_0$-semigroup over $C(\Omega)$, its resolvent $(s~\id-\K)^{-1}[g](\bb{x})$ has the semigroup formulation \cite[Theorem II.1.10]{EngelNagel} 
    \begin{equation}\label{eq: KoopResTD}
        (s~\id-\K)^{-1}[g](\bb{x}) = \int_{t=0}^\infty\K_{t}[g](\bb{x}) \exp(-st) \d t.
    \end{equation}
    Then, by plugging in the characterization from \cref{lem: Kdt_act_Morl} into \cref{eq: KoopResTD} we obtain
    \begin{equation*}
        (s~\id-\K)^{-1}[g](\bb{x}) = \int_{t=0}^\infty\left(\dfrac{\sqrt{2\pi}}{C_{\Gamma_M}}\int_{\sigma\in \R} \psi_{\sigma,\omega_0}(\bb{x})\Bigl(\exp(\imunit \omega_0 t/\sigma) - \kappa\Bigr) \dfrac{\d \sigma}{|\sigma|}\right) \exp(-st) \d t.
    \end{equation*}
    We can rearrange last equality as
    \begin{equation}\label{eq: intKoopRes1}
        (s~\id-\K)^{-1}[g](\bb{x}) = \dfrac{\sqrt{2\pi}}{C_{\Gamma_M}}\int_{t=0}^\infty\int_{\sigma\in \R} \psi_{\sigma,\omega_0}(\bb{x})\Bigl(\exp(\imunit \omega t/\sigma) -\kappa\Bigr)\exp(-st)\dfrac{\d \sigma}{|\sigma|} \d t.
    \end{equation}
    Switching the order of integrals in \cref{eq: intKoopRes1} by Fubini's theorem \cite[Corollary 1.7.23]{Tao} we get
    \begin{equation}\label{eq: intKoopRes2}
        (s~\id-\K)^{-1}[g](\bb{x}) = \dfrac{\sqrt{2\pi}}{C_{\Gamma_M}}\int_{\sigma\in \R}\psi_{\sigma,\omega_0}(\bb{x})\left(\int_{t=0}^\infty \Bigl(\exp(\imunit \omega_0 t/\sigma) - \kappa\Bigr) \exp(-st)\d t\right)\dfrac{\d \sigma}{|\sigma|}.
    \end{equation}
    Note that the inner integral in \cref{eq: intKoopRes2} is the Laplace transform of $\Bigl(\exp(\imunit \omega_0 t/\sigma)-\kappa\Bigr)$ at $s$. For this integral, we have the closed form solution
    \begin{equation}
        \int_{t=0}^\infty \Bigl(\exp(\imunit \omega_0 t/\sigma) - \kappa\Bigr) \exp(-st)\d t = \dfrac{1}{s - \dfrac{\imunit \omega_0}{\sigma}} - \dfrac{\kappa}{s}.
    \end{equation}
    Hence for $\Re(s) > 0$, we can write
    \begin{equation*}
        (s~\id-\K)^{-1}[g](\bb{x}) =\dfrac{\sqrt{2\pi}}{C_{\Gamma_M}}\int_{\sigma\in \R}\psi_{\sigma,\omega_0}(\bb{x})\left(\dfrac{1}{s - \dfrac{\imunit \omega_0}{\sigma} } -\dfrac{\kappa}{s}\right)\dfrac{\d \sigma}{|\sigma|}.
    \end{equation*}
\end{proof}

\cref{thm: Koop_res_Morl} suggests that the resolvent evaluated on $g$ has the integral form \cref{eq: Kres_WT_Morl} for each point $\bb{x}\in \M$. By approximating this integral via quadrature on $\sigma$ we obtain
\begin{equation}\label{eq: res_quad_kappa}
    (s~\id - \K)^{-1}[g](\bb{x}) \approx \sum_{j=1}^J \alpha_j\psi_{\sigma_j}(\bb{x})\left(\dfrac{1}{s - \dfrac{\imunit \omega_0}{\sigma_j} } -\dfrac{\kappa}{s}\right)\dfrac{1}{|\sigma_j|},
\end{equation}
where $\alpha_j\in \C$ are some quadrature weights and $\sigma_j\in \R$ are quadrature nodes. This suggests that this approximation lies in the span of $\psi_{\sigma_j}$. Hence, not only the output map $\K_{\Delta t}[g]$ but also $(s~\id - \K)^{-1}[g]$ is well approximated by this EDMD subspace for a good choice of $\sigma_j$. This then suggests that in addition to the output behavior, the EDMD approximation can recover the resolvent of the Koopman operator as well. 

Moreover, this approximation suggests how to pick good $\sigma_j$ for the case when $s$ lies close to the imaginary axis. Indeed, if we pick a large $\omega_0$, $\kappa$ becomes very small since $\kappa := e^{-\omega_0^2/2}$. Hence at \cref{eq: res_quad_kappa}, the term with $\kappa$ is negligible and we get
\begin{equation}\label{eq: ResQuad}
    (s~\id - \K)^{-1}[g](\bb{x}) \approx \sum_{j=1}^J \alpha_j\psi_{\sigma_j}(\bb{x})\dfrac{1}{s - \dfrac{\imunit \omega_0}{\sigma_j}} \dfrac{1}{|\sigma_j|}.
\end{equation}
The second term in the summand at \cref{eq: ResQuad} is an evaluation of the Cauchy kernel at $s$ and $\dfrac{\imunit \omega_0}{\sigma_j}$. So it decays rapidly when $s$ is away from $\dfrac{\imunit \omega_0}{\sigma_j}$. Assuming $\{\sigma_j\}_{j=1}^J$ are distinct and separated and $s$ is close to $\dfrac{\imunit \omega_0}{\sigma_k}$ for a $k=1,\dots,J$, this yields
\begin{equation}\label{eq: apprKoopEfn}
    (s~\id - \K)^{-1}[g](\bb{x}) \approx M\psi_{\sigma_k}(\bb{x})
\end{equation}
for some $M\in \C$. Hence, we expect the EDMD approximation with these new observables to approximate the spectral content of the Koopman operator well for $s\in \C$ that are close to $\{\imunit \omega_0/\sigma_j\}_{j=1}^J$ up to a complex scalar $M$.

\section{cWDMD: Wavelet-based dynamic mode decomposition with continuous wavelet transform}\label{sect: Implementation}
Until this section, we have established the theoretical foundation of the wavelet-based observables and motivated their suitability for EDMD. Specifically, we have shown that they are eigenfunctions of the Koopman semigroup $(\K_{\Delta t})_{\Delta t \geq 0}$ over the space $C(\Omega)$ where $\Omega$ is a compact, forward-invariant subset of $\M$ in \cref{thm: KoopEfn}. Also in \cref{eq: apprKoopEfn}, we argued that the EDMD subspace with wavelet-based observables $\{\psi_{\sigma_j}\}_{j=1}^J$ will approximate the Koopman resolvent evaluated at $\imunit \omega_0/\sigma_j$ closely using the result in \cref{thm: Koop_res_Morl}. 

In this section, we will propose the cWDMD algorithm -- an EDMD algorithm with wavelet-based observables. Then we will show that the EDMD matrix $\mathbf{K}$ acquired from this cWDMD algorithm yields an accurate approximation of the resolvent evaluated at the output function $g$ for spectral parameter $s\in \C$ near $\imunit \omega_0 / \sigma_j$ for two examples: a linear time-invariant system and the chaotic Lorenz system.

\subsection{Revisiting EDMD}\label{ss: EDMDImpl}
To propose an EDMD algorithm with wavelet-based observables, we first recall the EDMD algorithm for discrete-time systems. In \Cref{ss: KOT} we discussed how EDMD can be understood as a data-driven approximation to the Koopman operator.
Specifically, for the dynamical system~\cref{eq: out_DS}, assume that we have access to state dynamics $\bb{z}_k$, for $k=0,1,\ldots,N$ through a given set of observables $\psi_{\sigma_j}:\M\to\C$ for $j=1,2,\ldots,J$. Define 
$\psi = \bbm\psi_1 & \psi_2 & \cdots & \psi_J \ebm^\top$. Thus, we have access to the data $\psi(\bb{z}_{k})\in \mathbb{C}^{J} \in $ for $k=0,1,\ldots,N$.

Define $\mathcal{F}_J$ as the linear span of these observables. The EDMD process then amounts to solving the least-squares problem \cref{eq: EDMD_min} to obtain a matrix $\hat{\bb{K}}$ that captures the action of the Koopman operator $\mathcal{K}$ as expressed in \cref{eq: EDMD_Koop_approx}. 
Define the data matrices $\Psi,\Psi_+\in \mathbb{C}^{J\times N}$
$$\Psi = \bbm \psi(\bb{z}_0)& \psi(\bb{z}_1)& \cdots & \psi(\bb{z}_{N-1}) \ebm \quad \mbox{and} \quad
\Psi_+ = \bbm \psi(\bb{z}_1)& \psi(\bb{z}_2)& \cdots & \psi(\bb{z}_N) \ebm.
$$
Then, the optimization problem~\cref{eq: EDMD_min} can be equivalently reformulated as the least-squares (LS) problem 
\begin{equation}  \label{eq: EDMDFro}
    \hat{\bb{K}} = \argmin_{\bb{K}\in \C^{J\times J}} \| \Psi_{+} - \bb{K} \Psi \|_F^2.
\end{equation}
In its simplest formulation, the resulting EDMD algorithm can be summarized as in \Cref{alg: EDMD}. 

~

\begin{algorithm}[H]
    \caption{Extended Dynamic Mode Decomposition}\label{alg: EDMD}
    \KwData{$\{\psi(\bb{z}_i)\}_{i=0}^N$}
    \KwResult{$\hat{\bb{K}}\in \mathbb{C}^{J\times J}$: Approximated Koopman operator of the dynamical system}
    $\Psi \gets \bbm \psi(\bb{z}_0)& \psi(\bb{z}_1)& \cdots & \psi(\bb{z}_{N-1}) \ebm \in \mathbb{C}^{J\times N}$\;
    $\Psi_+ \gets \bbm \psi(\bb{z}_1)& \psi(\bb{z}_2)& \cdots & \psi(\bb{z}_N) \ebm \in \mathbb{C}^{J\times N}$\;
    \text{Solve} the LS problem \cref{eq: EDMDFro} \text{to obtain }$\hat{\bb{K}}$ \;
\end{algorithm}

~

There are various ways to solve \cref{eq: EDMDFro}. For our numerical examples, we have implemented the xGEDMD routine proposed in \cite[Algorithm 1]{xGEDMD}.

\subsection{cWDMD algorithm}\label{ss: cWDMD}
Recall the dynamical system 
\begin{equation}
    \begin{aligned}
        \dot{\bb{x}} &= \mathcal{T}(\bb{x})\\
        y &= g(\bb{x})
    \end{aligned}~,\quad \left\{\begin{aligned}
        &\bb{x}\in \mathcal{M} \subseteq \R^n \\ 
        &y \in \R.
    \end{aligned}\right.   \tag{\ref{eq: out_DS}}
\end{equation}
and the wavelet-based observables \cref{eq: wave_obs} given as
\begin{equation*}
    \psi_{\sigma}(\bb{x})=
    \mathcal{W}[y(\cdot,\bb{x})](\sigma,0) = \int_{t\in\R} y(t,\bb{x})\overline{\Gamma_{\sigma,0}(t)} \d t.
\end{equation*}
The proposed algorithm will correspond to applying EDMD with these observables. A sketch of the algorithm is given in \cref{alg: cWDMD}, whose steps we will explain next. \cref{alg: cWDMD} essentially applies CWT to output samples and then solves the least-squares problem \cref{eq: EDMD_min}. Although this resembles the WDMD algorithm \cite{Krishnan_2023}, the important distinction is the use of the continuous version of the wavelet transform with a fixed implementation strategy, rather than the discrete version adopted in WDMD. 
This continuous formulation enables the development of a rigorous theoretical justification of invariance properties that do not hold for the WDMD algorithm. Despite this fundamental analytical difference, we retain the terminology and name the proposed algorithm as Wavelet-based Dynamic Mode Decomposition via Continuous Wavelet Transform and abbreviate it as cWDMD. 

For the dynamical system~\cref{eq: out_DS}, 
\cref{alg: cWDMD} assumes to have the output samples $y(t_i,\bb{x}_{0}^{(k)})$ for some equidistant time samples $t_i = i\Delta t$ for some $\Delta t > 0$, $i =0,\dots,N$ and initial conditions $\{\bb{x}_{0}^{(k)}\}_{k=1}^K$. For any initial condition $\bb{x}_{0}^{(k)}$, we first compute the frequencies $\{\hat{y}(\omega_i,\bb{x}_{0}^{(k)})\}_{i=1}^N$ via the Fast Fourier Transform (FFT) algorithm (Line 2). Then we scale them by $\sigma_j\overline{\hat{\Gamma}(\sigma_j\omega_i)}$ (Line 3) and invert the FFT (Line 4) to get the observable  evaluations. In short, Lines 2-4 apply numerical quadrature to approximate the wavelet coefficients $\mathcal{W}[y(~\cdot,\bb{x}_{0}^{(k)})](\sigma_j,i\Delta t) \approx \K_{i\Delta t}[\psi_{\sigma_j}](\bb{x}_{0}^{(k)})$. Line 5 constructs the evaluations for vector valued function $\psi(\bb{x}_{0}^{(k)})$. Then in Lines 6-7 we build the data matrices for this initial condition $\bb{x}_{0}^{(k)}$ and append it to the full data matrices in Lines 8-9. After processing all the initial conditions, we solve the LS problem \cref{eq: EDMDFro} in Line 10.  

~

\begin{algorithm}[H]
    \LinesNumbered
    \caption{cWDMD: Wavelet-based DMD via CWT}\label{alg: cWDMD}
    \KwData{$\{\bb{y}(t_i,\bb{x}_{0}^{(k)})\}_{i=0}^{N}$ for $k=1,\dots,K$,~$\{\sigma_j\}_{j=1}^J\subset \R_+$,~$\Gamma:\R\to\C$.}
    \KwResult{EDMD matrix $\hat{\bb{K}}\in \mathbb{C}^{J\times J}$.}

    \For{$k = 1,\dots,K$}{%
        $\{(\omega_i,\bb{f}_i^{(k)})\}_{i=0}^{N} \gets \text{FFT}\big(\{\bb{y}(t_i,\bb{x}_{0}^{(k)})\}_{i=0}^{N}\big)$
        \tcp*{Compute $\bb{f}_i^{(k)} = \hat{y}(\omega_i,\bb{x}_{0}^{(k)})$}

        $\alpha_{i,j} \gets \sigma_j\hat{\Gamma}(\sigma_j\omega_i)$ \tcp*{Compute $\sigma_j\hat{\Gamma}(\sigma_j\omega_i)$}

        $w_{i,j}^{(k)} \gets \text{IFFT}\big(\{\alpha_{i,j}\bb{f}_i^{(k)}\}_{i=0}^{N}\big)$
        \tcp*{Compute $\K_{i\Delta t}[\psi_{\sigma_j}]$}

        $\bb{w}_i^{(k)} \gets \bbm w_{i,1}^{(k)} & \cdots & w_{i,J}^{(k)}\ebm^\top$\;

        $\Psi^{(k)} \gets \bbm \bb{w}_0^{(k)} & \bb{w}_1^{(k)} & \cdots & \bb{w}_{N-1}^{(k)} \ebm
        \in \mathbb{R}^{J\times N}$\;

        $\Psi_+^{(k)} \gets \bbm \bb{w}_1^{(k)} & \bb{w}_2^{(k)} & \cdots & \bb{w}_N^{(k)} \ebm
        \in \mathbb{R}^{J\times N}$\;

        $\Psi \gets \bbm \Psi & \Psi^{(k)}\ebm$\;
        $\Psi_+ \gets \bbm \Psi_+ & \Psi_+^{(k)}\ebm$\;
    }
    \text{Solve the LS problem \cref{eq: EDMDFro}} \text{to obtain }$\hat{\bb{K}}$\;
\end{algorithm}

~

In addition to the output data, cWDMD requires choosing $\Gamma$ and the scales $\{\sigma_j\}_{j=1}^J$. For $\Gamma$, we pick $\omega_0 =6$ and let $\Gamma$ be the modulated Gaussian. The choice of scales $\{\sigma_j\}_{j=1}^J$ is fundamental in cWDMD, since these scales determine the spectral parameters $s$ for which one expects a close approximation of the resolvent action $(s~\id - \K)^{-1}[g]$. In our experiments, we set $\sigma_j = 2^{\frac{j}{C}}$ with $j = 1,\dots, J$ and $C>0$. The specific choices of $J$ and $C$ are stated in the numerical examples below.

Another implementation detail, which is not present in \Cref{alg: cWDMD}, is the \emph{realification part}. Note that observables $\psi_{\sigma_j}$ are complex valued and working with complex arithmetic is not ideal in practice. Hence, we split our observables into real and complex parts and identify each of them as a separate observable. So in practice, we have $2J$ observables $\{\Re\{\psi_{\sigma_j}\}\} _{j=1}^J\cup\{\Im\{\psi_{\sigma_j}\}\}_{j=1}^J$. One can show that this is the same as assuming that for every $\psi_{\sigma_j}$, $\psi_{-\sigma_j}$ is also an observable. 

\subsection{Numerical Examples}\label{ss: NumLTI}
We will consider two examples: a two dimensional linear time-invariant system and the Lorenz 63 system with parameters that make the system chaotic. For both cases, we will approximate the Koopman resolvent evaluated at $g$ by the approximation \cref{eq: apprKoopEfn} resulting from \cref{thm: Koop_res_Morl}. 

Experiments reported here have been executed on a machine with the 12th Gen Intel(R) Core(TM) i5-12500H x64 based CPU running at 2.50GHz and equipped with 8.00GB RAM. The computer is running on Windows 11 Home Edition 64-bit operating system with version 24H2 and OS build 26100.3476. MATLAB programming language with version 23.2.0.2485118 (R2023b) is used for both the algorithm implementation and testing.

\subsubsection{Toy Example: A linear dynamical system}
We will start with a linear time-invariant (LTI) system given by
\begin{equation}\label{eq: linEx}
\begin{aligned}
    \dot{\bb{x}} &= \bb{A} \bb{x} \\
                y &= \bb{c}^\top\bb{x}
\end{aligned}
~,\quad \text{where } \bb{A} = \left[\begin{array}{rr} -1 & 500 \\ -500& -1  \end{array}\right],~\bb{c} = \left[\begin{array}{c}
     1 \\ 0 
\end{array}\right],~\bb{x}(0) = \bb{x}_0,~t\geq 0.
\end{equation}
The system~\cref{eq: linEx} is of the form \cref{eq: out_DS}, with $\mathcal{T}(\bb{x}) = \bb{A}\bb{x} $ and $g(\bb{x}) =\bb{c}^\top\bb{x} $. Note that \cref{eq: linEx} is an asymptotically stable system since real part of the eigenvalues of $\bb{A}$ are negative. Hence for any initial condition $\bb{x}_0$, we expect the solution $\bb{x}(t)$ to decay to $0$ as $t\to \infty$. 
We assume that we have no prior information of~\cref{eq: linEx}. So we treat \cref{eq: linEx} as a possibly nonlinear system. We have access to samples of the output trajectory ($y(t)=\bb{c}^\top\bb{x}(t)$ for this example) for different initial conditions. We pick $100$ initial conditions uniformly random from $\mathbb{S}^2\subset\R^2$, the circle with radius $20$, i.e.,
$
    \mathbb{S}^2 := \{\bb{x}\in \R^2 \mid \|\bb{x}\| = 20\}
$
and pick $\Omega\subset \R^2$ to be the disc with radius $20$, i.e.,
$\Omega = \{\bb{x}\in \R^2 \mid \|\bb{x}\|\leq 20\}$.
 For each initial condition, we simulate the system for $T=2$ seconds. We sample these trajectories with time stepping $\Delta t = 0.001$ and apply cWDMD to the resulting data. For the scales $\sigma_j$, we set $C=32$ and $J=288$. Since the output $y$ is scalar-valued, this yields $288$ observables. In addition, upon realification, we include $\psi_{-\sigma_j}$ in the observable set for every $j=1,\dots,J$. Hence, the total number of wavelet-based observables becomes $576$. We also add the mean of the trajectory as an additional observable. So in total we have $577$ observables.

For the LTI system \cref{eq: linEx}, we can analytically write the Koopman resolvent evaluated on $g$  for any $s\in \C$~\cite{Cont_KoopRes}, when it is well-defined, as
\begin{equation}\label{eq: LTIKoopEfn}
    (s~\id - \K)^{-1}[g](\bb{x}) = \bb{c}^\top(s\bb{I} - \bb{A})^{-1}\bb{x}.
\end{equation}
We aim to approximate the maximum Koopman resolvent evaluated at $g$ over the imaginary axis. In our example eigenvalues of $\bb{A}$ are $\lambda_{1,2} = -1 \pm 500 \imunit$. Hence we expect the peak to occur at $s=500$ radians, which corresponds to $\approx 79.58$ Hz. This is indeed what we see if we plot the norm of $(s~\id - \K)^{-1}[g]$ when $s$ ranges over the positive imaginary line, see \Cref{fig: LTIBode}.
\begin{figure}[h]
    \centering
    \includegraphics[width=0.7\linewidth]{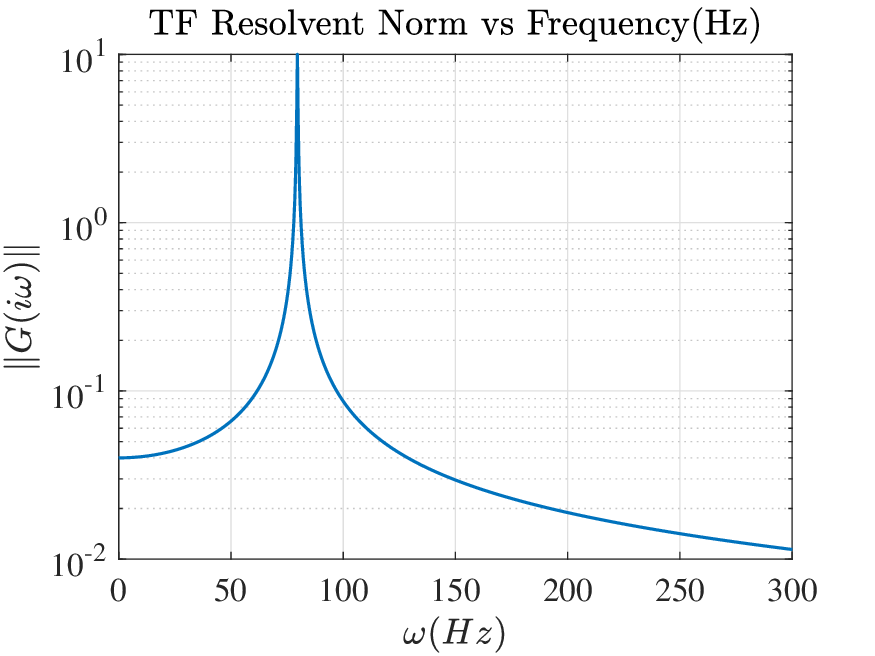}
    \caption{Norm of \cref{eq: LTIKoopEfn} on the positive imaginary axis (in Hz)}
    \label{fig: LTIBode}
\end{figure}

Using \cref{eq: LTIKoopEfn} one can compute the function $(s~\id - \K)^{-1}[g]$ over the state space. The magnitude and argument of this function on the sampled state data is plotted in \Cref{fig: LTIKoopResEv} where the magnitude is normalized to $1$.

\begin{figure}[htbp]
    \centering
    \begin{subfigure}{0.48\linewidth}
        \centering
        \includegraphics[width=\linewidth]{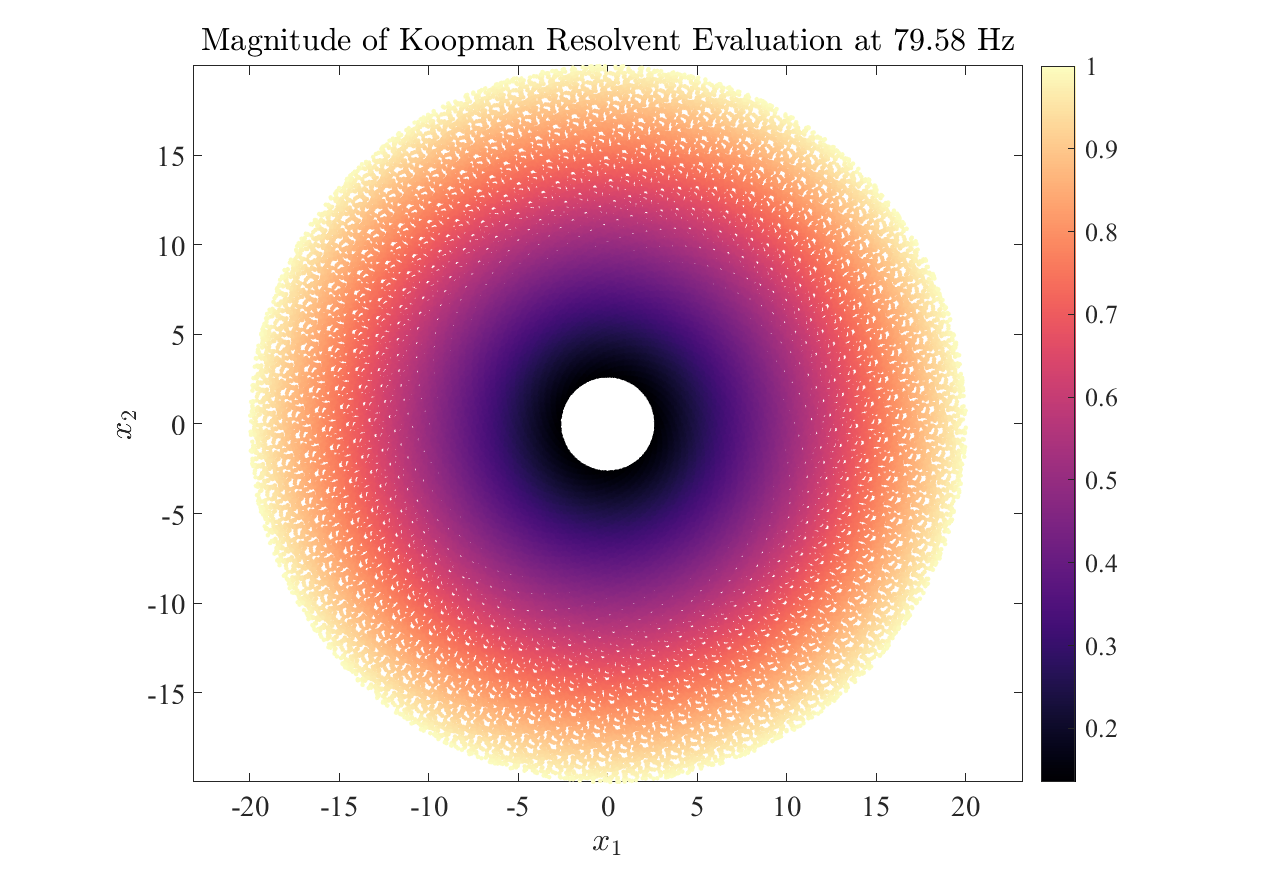}
        \caption{Magnitude}
        \label{fig: LTIKoopResMagEv}
    \end{subfigure}
    \hfill
    \begin{subfigure}{0.48\linewidth}
        \centering
        \includegraphics[width=\linewidth]{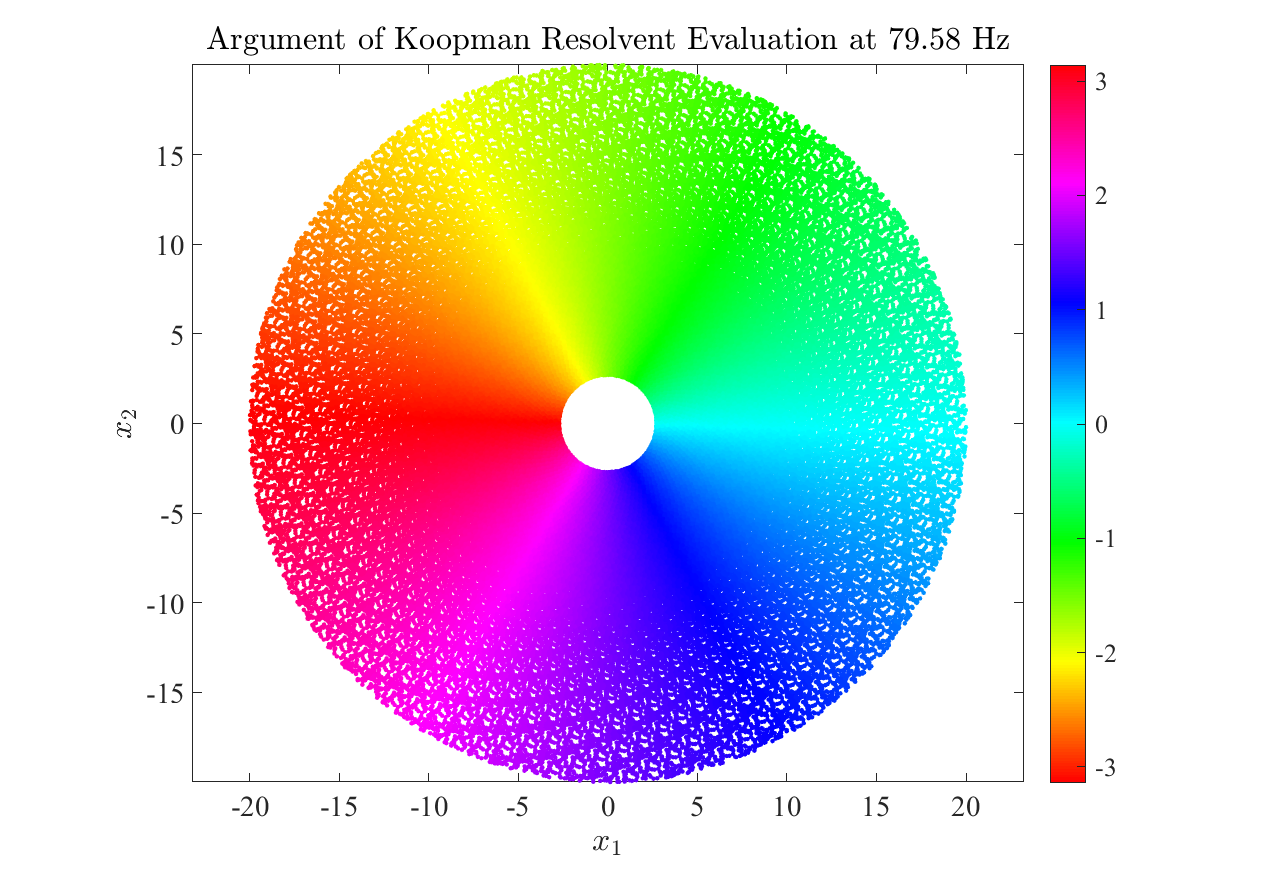}
        \caption{Argument}
        \label{fig: LTIKoopResArgEv}
    \end{subfigure}
    \caption{Koopman Resolvent Evaluation at Peak Frequency}
    \label{fig: LTIKoopResEv}
\end{figure}

\Cref{eq: apprKoopEfn} suggests that we can approximate the resolvent evaluation $(s~\id - \K)^{-1}[g]$ by the wavelet-based observable $\psi_\sigma$ if we pick the scale $\sigma$ such that
$
    \sigma \approx \dfrac{\omega_0}{s}.
$
We will show the approximation at $79.54$ Hz since the peak occurs around that frequency, see \Cref{fig: LTIBode}.

The magnitude and argument of the approximated wavelet-based observable corresponding to the frequency $79.54$ Hz is shown in \Cref{fig: LTIKoopResMagAppr,fig: LTIKoopResArgAppr}. Recall that \cref{thm: KoopEfn} asserts that wavelet-based observables are eigenfunctions of the Koopman operator. Therefore, to obtain an approximation of this observable, we reconstruct the Koopman eigenfunction associated with the corresponding eigenvalue from the EDMD matrix $\bb{K}$ obtained through \cref{alg: cWDMD}. 

In \Cref{fig: LTIKoopResMagAppr,fig: LTIKoopResArgAppr}, both the magnitude and argument are normalized: the magnitude is scaled to have maximum value $1$, while the argument is shifted so that it agrees with the argument of the analytic expression in \cref{eq: LTIKoopEfn} at one predetermined point. This normalization is necessary because \cref{eq: apprKoopEfn} suggests that $\psi_\sigma$ approximates the resolvent evaluation only up to a complex scaling factor $M\in \C$.

\begin{figure}[h]
    \centering
    \includegraphics[width=\linewidth]{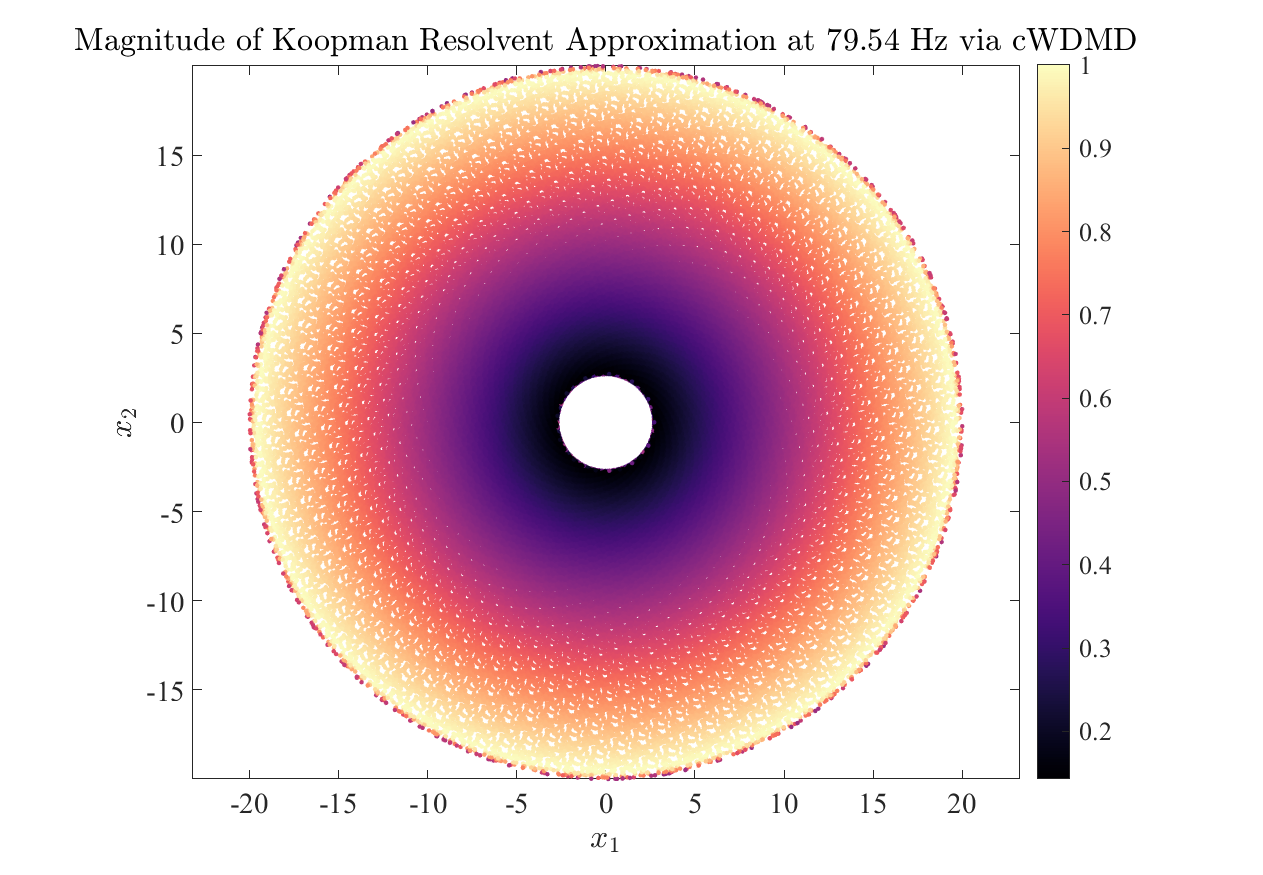}
    \caption{Magnitude of the Koopman Resolvent Approximation at Peak Frequency}
    \label{fig: LTIKoopResMagAppr}
\end{figure}

\begin{figure}[h]
    \centering
    \includegraphics[width=\linewidth]{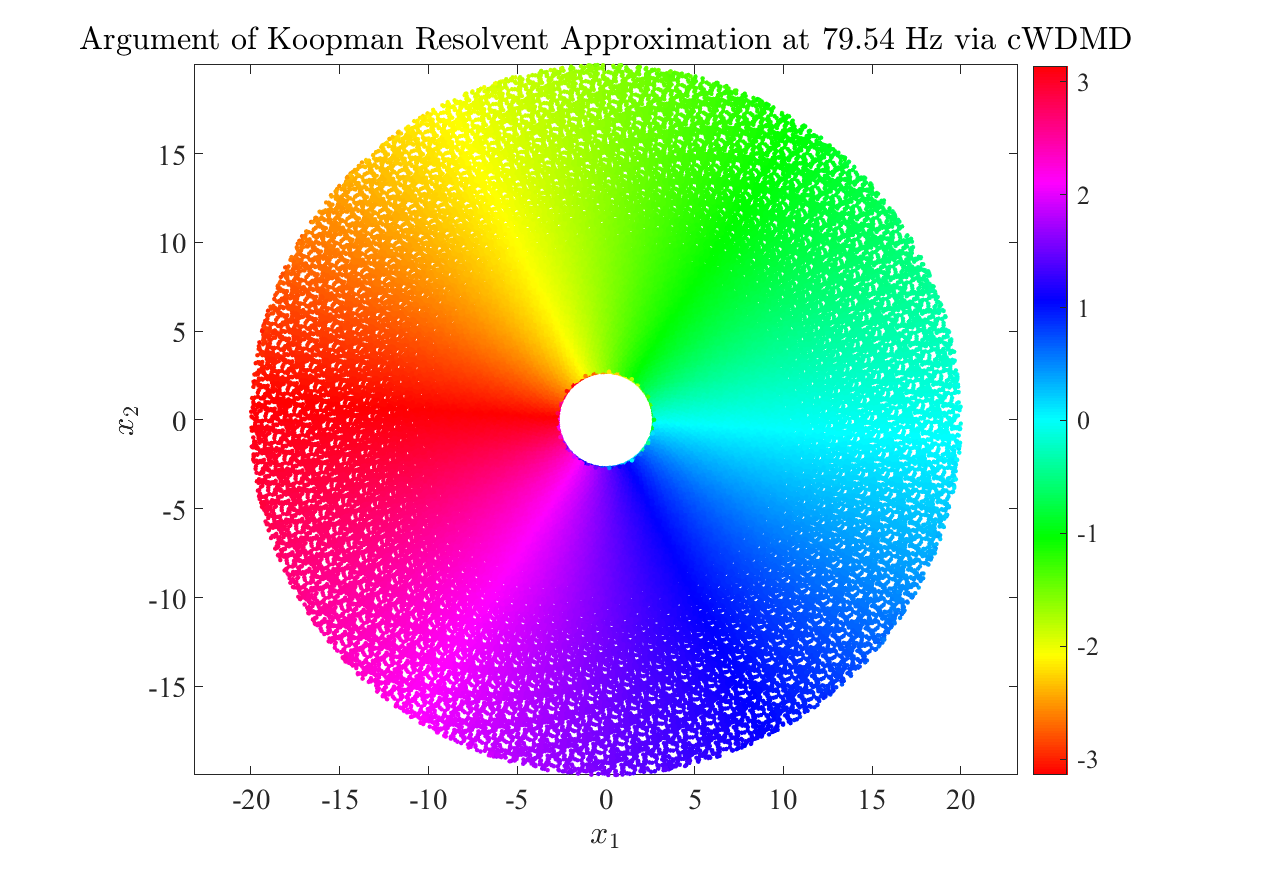}
    \caption{Argument of the Koopman Resolvent Approximation at Peak Frequency}
    \label{fig: LTIKoopResArgAppr}
\end{figure}

Comparing \Cref{fig: LTIKoopResMagAppr,fig: LTIKoopResArgAppr} with \Cref{fig: LTIKoopResEv} we note that the resolvent approximations via cWDMD closely match the analytical formula given in \cref{eq: LTIKoopEfn} except at inner and outer boundaries of the annulus. Hence we claim that this method approximates the resolvents closely. The approximation error at the boundaries stem from the edge effects when computing the wavelet transform. Simply put, since start and end of the trajectories do not contain enough data, the computation error is more prominent at the edges. This can be minimized by increasing the sampling frequency.

\subsubsection{Example: Lorenz system}
For a more complex example, we will consider the Lorenz system. It is described by the system of differential equations
\begin{equation}\label{eq: lorenz}
    \begin{aligned}
    \frac{\d x_1}{\d t} & = \alpha (x_2 - x_1) \\
    \frac{\d x_2}{\d t} & = x_1 (\rho - x_3) - x_2 \\
    \frac{\d x_3}{\d t} & = x_1x_2 - \beta x_3
    \end{aligned},\quad \alpha,\rho,\beta >0.
\end{equation}
Following \cite{riggedEDMD}, we pick our output to be 
\begin{equation*}
    g(\bb{x}) = \textrm{tanh}((x_1x_2 - 5x_3)/10) \quad\textrm{where } \bb{x} = \bbm x_1& x_2 & x_3 \ebm^\top.
\end{equation*}
As in the previous example, \cref{eq: lorenz} is of the form \cref{eq: out_DS}. We use $\alpha = 10,~\rho = 28$ and $\beta = \frac{8}{3}$ to obtain  chaotic behavior. We fix $\Omega\subseteq \R^3$ as the box
$$\Omega = [-20,20]\times [-30,30] \times [0,50]$$
and draw $40$ initial conditions uniformly at random from $\Omega$. We simulate this system on the time interval $[0,100]$, with time step $\Delta t = 0.02$. For scales $\sigma_j$, we pick $C = 20$ and $J = 220$. Considering the realification and the additional mean observable, this amounts to $441$ observables in total. 

Unlike the previous example, we do not have an exact formula for the Koopman resolvent. Hence, we will compare our results with the existing numerical results from literature instead, specifically with the results of  \cite{MezicSpec}. In \cite{LorenzEfn}, authors have numerically verified that the associated Koopman operator has a resonance around $\lambda \approx -0.1 \pm 8\imunit$. This suggests that maximum Koopman resolvent evaluated at $g$ over the imaginary axis should have a peak around $8$ radians per second. This is also confirmed by \cite{MezicSpec} where the authors noticed that the peak is located at $\approx 8.17$ radians per second which corresponds to approximately $ 1.30$ Hz. Moreover the authors have plotted real and imaginary components of the associated resolvent evaluation at \cite[Figure 13]{MezicSpec} with their proposed method. As in the previous example, we will plot our approximation to this using wavelet-based observables, see \Cref{fig: realLorenz,fig: imagLorenz} for real and imaginary parts respectively. 

\begin{figure}[h]
    \centering
    \includegraphics[width=0.7\linewidth]{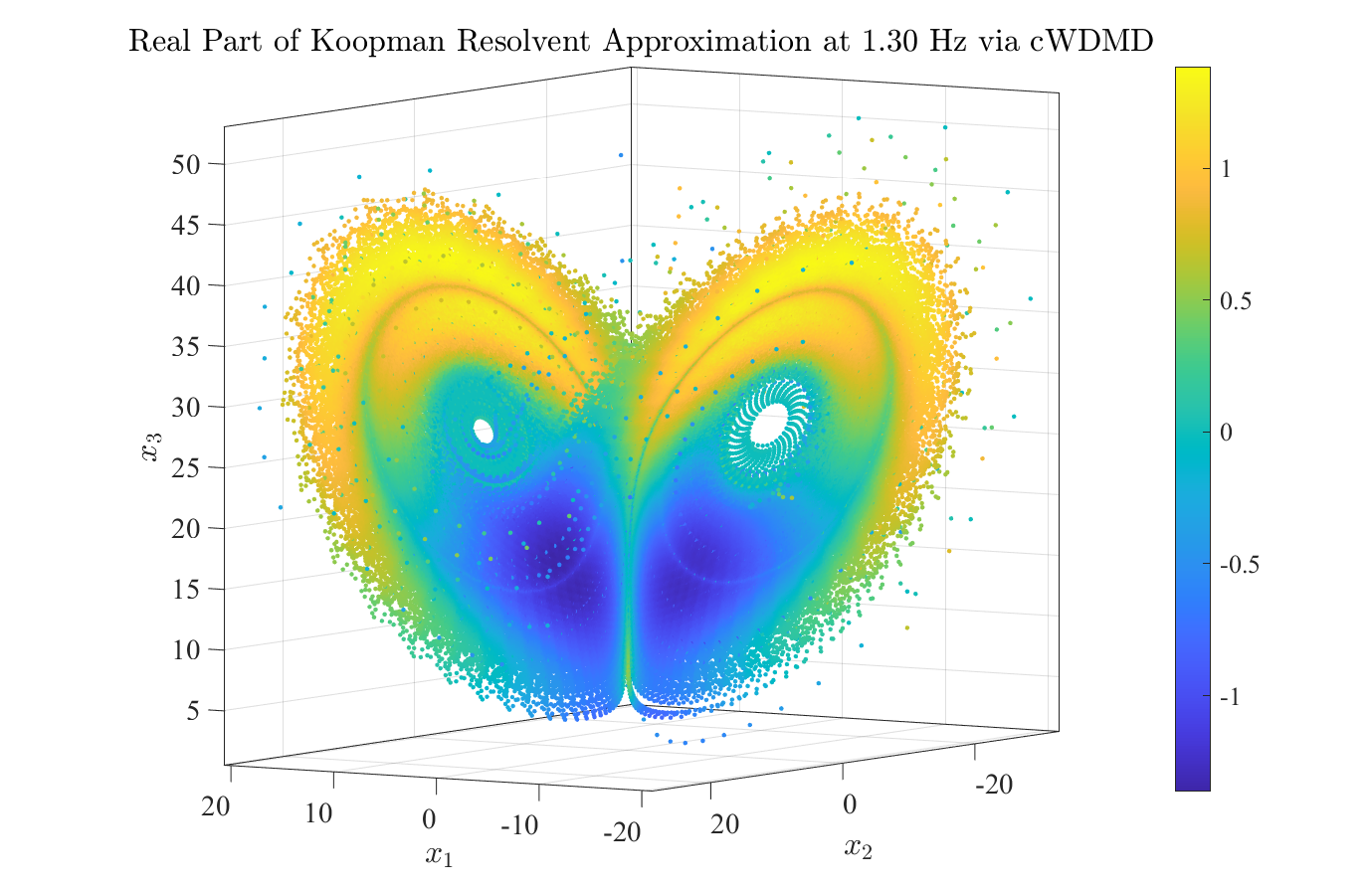}
    \caption{Real Part of the Koopman Resolvent Approximation}
    \label{fig: realLorenz}
\end{figure}

\begin{figure}[h]
    \centering
    \includegraphics[width=0.7\linewidth]{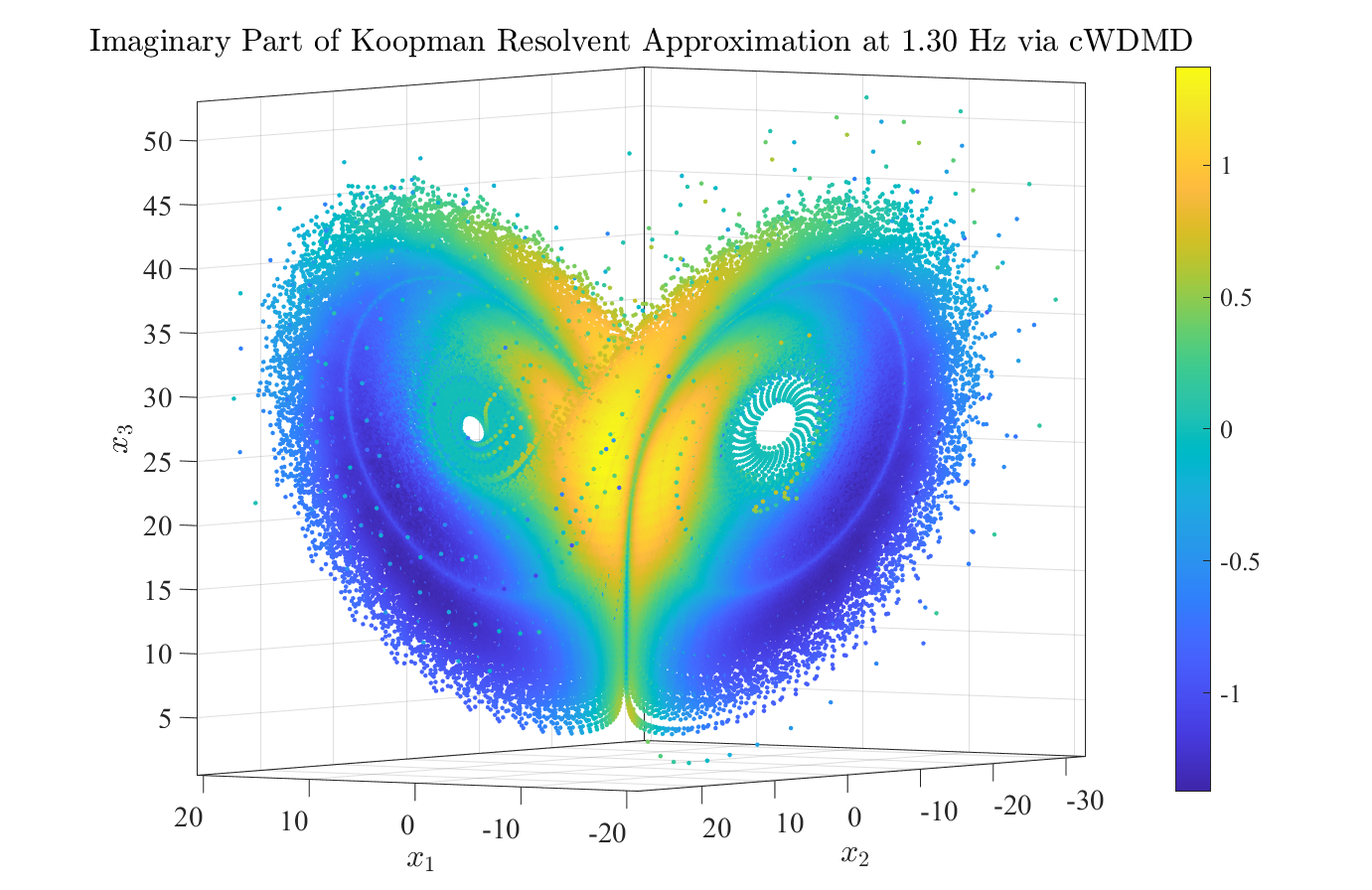}
    \caption{Imaginary Part of the Koopman Resolvent Approximation}
    \label{fig: imagLorenz}
\end{figure}

Note that these figures closely resemble the real and imaginary parts observed in \cite[Figure 13]{MezicSpec}. In short, we have shown that cWDMD eigenfunctions approximates the Koopman resolvent evaluation at $g$ as proposed by \cref{eq: apprKoopEfn} at prescribed frequencies. This holds in practice even for the chaotic systems like Lorenz system with chaotic parameters.

\section{Conclusions}\label{sect: conc}

In this article, we analyzed the Koopman operator using wavelet transform and progressed towards the cWDMD algorithm; a CWT-based EDMD implementation. To do that we first expressed the action of the Koopman semigroup $(\K_{\Delta t})_{\Delta t\geq 0}$ on a function $f\in C(\Omega)$ via the wavelet transform in \cref{thm: Koop_act_wt}. Motivated by this result, we have defined wavelet-based observables in \cref{eq: wave_obs} and shown that they are eigenfunction of the Koopman semigroup at \cref{thm: KoopEfn} when $\Gamma$ is picked as a modulated Gaussian. Moreover, we have provided formulas for recovering the output trajectory and Koopman resolvent from these wavelet-based observables in \cref{lem: Kdt_act_Morl,thm: Koop_res_Morl} respectively. This theoretical reasoning showed us that the wavelet-based observable with parameter $\sigma$ approximates $(s~\id - \K)^{-1}[g]$ for $s = \imunit \omega_0/\sigma$ closely up to a complex scaling. We have used this result to compute the Koopman resolvent of an LTI system and chaotic Lorenz system.


\addcontentsline{toc}{section}{References}
\bibliographystyle{plainurl}
\bibliography{references}

\end{document}